\documentclass[aihp]{imsart}

\RequirePackage{amsthm,amsmath,amsfonts,amssymb,mathrsfs,mathtools,enumerate}
\RequirePackage[numbers]{natbib}
\RequirePackage[colorlinks,citecolor=blue,urlcolor=blue]{hyperref}
\RequirePackage{graphicx}

\startlocaldefs

\DeclareMathOperator{\Var}{Var} \DeclareMathOperator{\dist}{dist}
\DeclareMathOperator{\Dist}{Dist} \DeclareMathOperator{\diam}{diam}

\theoremstyle{plain}
  \newtheorem{theorem}{Theorem}[section]
  \newtheorem{lemma}[theorem]{Lemma}
  \newtheorem{proposition}[theorem]{Proposition}
  \newtheorem{corollary}[theorem]{Corollary}

\theoremstyle{remark}
    \newtheorem{definition}[theorem]{Definition}
    \newtheorem{remark}[theorem]{Remark}

\endlocaldefs

\begin{document}
\begin{frontmatter}

\title{Convergence of limit shapes for 2D near-critical first-passage percolation}
\runtitle{Limit shapes for near-critical FPP}

\begin{aug}
\author[]{\inits{C.-L.}\fnms{Chang-Long} \snm{Yao}\ead[label=e1]{deducemath@126.com}}
\address[]{Academy of Mathematics and Systems Science, CAS, Beijing, China. \printead{e1}}

\end{aug}

\begin{abstract}
We consider Bernoulli first-passage percolation on the triangular
lattice in which sites have 0 and 1 passage times with probability
$p$ and $1-p$, respectively.  For each $p\in(0,p_c)$, let $\mathcal
{B}(p)$ be the limit shape in the classical ``shape theorem'', and
let $L(p)$ be the correlation length.  We show that as $p\uparrow p_c$, the rescaled limit shape $L(p)^{-1}\mathcal {B}(p)$ converges
to a Euclidean disk.  This improves a result of Chayes et al.
[\emph{J. Stat. Phys.} \textbf{45} (1986) 933--951].  The proof
relies on the scaling limit of near-critical percolation established
by Garban et al. [\emph{J. Eur. Math. Soc.} \textbf{20} (2018)
1195--1268], and uses the construction of the collection of
continuum clusters in the scaling limit introduced by Camia et al.
[\emph{Springer Proceedings in Mathematics \& Statistics},
\textbf{299} (2019) 44--89].
\end{abstract}

\begin{abstract}[language=french]
???.
\end{abstract}

\begin{keyword}[class=MSC]
\kwd[Primary ]{60K35}
\kwd[; secondary ]{82B43}
\end{keyword}

\begin{keyword}
\kwd{first-passage percolation}
\kwd{near-critical percolation}
\kwd{scaling limit}
\kwd{correlation length}
\kwd{shape theorem}
\end{keyword}

\end{frontmatter}

\section{Introduction}
\subsection{The model and main result}\label{s1}
First-passage percolation (FPP) is a stochastic growth model which
was first introduced by Hammersley and Welsh \cite{HW65} in 1965.  For general background on FPP, we refer
the reader to the recent survey \cite{ADH17}.  In this paper, we will focus on FPP defined on the triangular
lattice $\mathbb{T}$, since the proof of our main result relies on the scaling limit of near-critical site percolation
on $\mathbb{T}$ established by Garban, Pete and Schramm \cite{GPS18}, while such result has not been proved for other planar lattices.

The model is defined as follows.  Let $\mathbb{T}$ be the triangular
lattice embedded in $\mathbb{C}$, with site (vertex) set
\begin{equation*}
V(\mathbb{T}):=\{x+ye^{\pi i/3}\in\mathbb{C}:x,y\in\mathbb{Z}\},
\end{equation*}
and bond (edge) set $E(\mathbb{T})$ obtained by connecting all pairs
$u,v\in V(\mathbb{T})$ for which ${|u-v|=1}$, where $|\cdot|$ denotes the Euclidean norm.  We say that $u$
and $v$ are neighbors if $(u,v)\in E(\mathbb{T})$.  A \textbf{path} is a
sequence $(v_0,\ldots,v_n)$ of distinct sites such that $v_{j-1}$
and $v_j$ are neighbors for all $j=1,\ldots,n$.  Let $\{t(v):v\in
V(\mathbb{T})\}$ be an i.i.d. family of nonnegative random variables
with common distribution function $F$.  For a path $\gamma$,
we define its passage time by $T(\gamma):=\sum_{v\in \gamma}t(v)$.
For $A,B\subset V(\mathbb{T})$, the \textbf{first-passage time} from $A$ to $B$ is
defined by
\begin{equation*}
T(A,B):=\inf \{T(\gamma):\gamma \mbox{ is a path from a site in $A$
to a site in $B$}\}.
\end{equation*}
A \textbf{geodesic} from $A$ to $B$ is a path $\gamma$ from $A$ to $B$ such that
$T(\gamma)=T(A,B)$.  If $x,y\in\mathbb{C}$, we define $T(x,y):=T(\{x'\},\{y'\})$,
where $x'$ (resp. $y'$) is the site in $V(\mathbb{T})$ closest to
$x$ (resp. $y$).  Any possible ambiguity can be avoided by ordering
$V(\mathbb{T})$ and taking the site in $V(\mathbb{T})$ smallest for
this order.

In this paper, we concentrate on \textbf{Bernoulli FPP} on
$\mathbb{T}$.  More precisely, for each $p\in [0,1]$, we consider the i.i.d. family $\{t(v):v\in V(\mathbb{T})\}$ of
Bernoulli random variables with parameter $p$, that is, $t(v)=0$ with probability $p$ and $t(v)=1$ with probability $1-p$.
This gives rise to a product probability measure on the set
of configurations $\{0,1\}^{V(\mathbb{T})}$, which is denoted by $\mathbf{P}_p$, the corresponding expectation being $\mathbf{E}_p$.
We refer to a site $v$ with $t(v)=0$ simply as \textbf{open};
otherwise, \textbf{closed}.  So one can view Bernoulli FPP as
Bernoulli site percolation on $\mathbb{T}$ (see, e.g., \cite{BR06,Gri99}
for background on percolation; note that for Bernoulli FPP,
a site is open when it takes the value 0, while in the percolation
literature, a site is open usually means that the site takes the
value 1).  We usually represent it as a random coloring of the faces
of the dual regular hexagonal lattice $\mathbb{H}$, each face centered at
$v\in V(\mathbb{T})$ being blue ($t(v)=0$) or yellow ($t(v)=1$); see Figure \ref{pointtopoint}.
Sometimes we view the site $v$ as the hexagon $H_v$ in $\mathbb{H}$
centered at $v$.

\begin{figure}
\begin{center}
\includegraphics[height=0.35\textwidth]{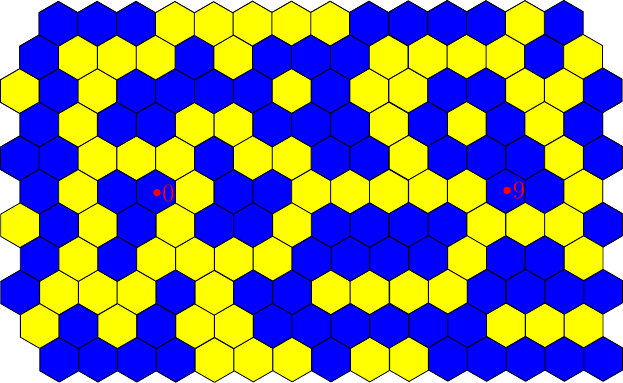}
\caption{Bernoulli FPP on $\mathbb{T}$.  Each
hexagon of the hexagonal lattice
$\mathbb{H}$ represents a site of $\mathbb{T}$, and is
colored blue ($t(v)=0$) or
yellow ($t(v)=1$).  Here, the
first-passage time $T(0,9)=2$.}\label{pointtopoint}
\end{center}
\end{figure}

Suppose $p\in[0,1]$.  It follows from the subadditive ergodic theorem that, for any
$z\in\mathbb{C}$, there is a constant $\mu(p,z)$, such that
\begin{equation}\label{e28}
\lim_{n\rightarrow\infty}\frac{T(0,nz)}{n}=\mu(p,z)\qquad\mbox{$\mathbf{P}_p$-a.s.
and in $L^1$}.
\end{equation}
We call $\mu(p):=\mu(p,1)$ the \textbf{time constant}.  As usual, we
write $a_{0,n}:=T(0,n)$.

It is well known (see, e.g., Kesten's Theorem 6.1 in \cite{Kes86}
for general FPP) that
\begin{equation}\label{e112}
\mu(p)=0\quad\mbox{if and only if}\quad p\geq p_c,
\end{equation}
where $p_c=p_c(\mathbb{T})=1/2$ is the critical point for Bernoulli
site percolation on $\mathbb{T}$.  In this paper, we will focus on
the subcritical case, where $p<p_c$.

The fundamental object of study is
\begin{equation*}
B(t):=\{z\in\mathbb{C}:T(0,z)\leq t\},
\end{equation*}
the set of points reached from the origin 0 within a time $t\geq 0$.  Using (\ref{e28}) and (\ref{e112}), it is easy to deduce that
$\mu(p,z)$ is a norm on $\mathbb{C}$ for each fixed $p<p_c$.  The
unit ball in $\mu$-norm is called the \textbf{limit shape} and will
be denoted by
\begin{equation*}
\mathcal {B}(p):=\{z\in\mathbb{C}:\mu(p,z)\leq 1\}.
\end{equation*}
It is the limit of $B(t)$ in the following sense:  By the famous
Cox-Durrett shape theorem (see, e.g., Theorem 2.17 in \cite{ADH17}),
for each $p<p_c$ and each $\epsilon>0$,
\begin{equation}\label{e1}
\mathbf{P}_p\left[(1-\epsilon)\mathcal {B}(p)\subset
\frac{B(t)}{t}\subset(1+\epsilon)\mathcal {B}(p)\mbox{ for all large
}t\right]=1.
\end{equation}
Moreover, $\mathcal {B}(p)$ is a convex, compact set with non-empty
interior, and has the symmetries of $\mathbb{T}$ that fix the
origin.  Apart from this, little is known about the geometry of
$\mathcal {B}(p)$.

We want to study the asymptotics of $\mu(p)$ and $\mathcal {B}(p)$,
as $p\uparrow p_c$.  For this purpose, we will use a very useful
concept from near-critical percolation: \textbf{correlation length}.
Roughly speaking, the system ``looks like'' critical percolation on
scales smaller than correlation length, while ``notable''
super/sub-critical behavior emerges above this length.  There are
several natural definitions of correlation length, and the
corresponding lengths turn out to be of the same order of magnitude.
See Section \ref{correlation} for two different definitions of it,
denoted by $L(p)$ and $L_{\epsilon}(p)$, respectively.  The former
is defined in terms of the alternating 4-arm events, while the
latter in terms of the box-crossing events.

Chayes, Chayes and Durrett \cite{CCD86} proved that,
\begin{equation}\label{p8}
\mu(p)\asymp L(p)^{-1}\quad\mbox{as }p\uparrow p_c.
\end{equation}
This result together with (\ref{e14}) implies that
$\mu(p)=(p_c-p)^{4/3+o(1)}$ as $p\uparrow p_c$.  We note that
(\ref{p8}) was proved in \cite{CCD86} for bond version of
subcritical Bernoulli FPP on $\mathbb{Z}^2$; the same proof applies
to our site version on $\mathbb{T}$.

Let $\mathbb{U}:=\{z\in\mathbb{C}:|z|=1\}$ denote the unit circle
centered at 0.  For $r>0$ and $z\in\mathbb{C}$, let
$\mathbb{D}_r(z):=\{x\in\mathbb{C}:|x|<r\}$ denote the open
Euclidean disk of radius $r$ centered at $z$, and let
$\overline{\mathbb{D}}_r(z)$ denote its closure.  Write
$\mathbb{D}_r=\mathbb{D}_r(0)$ and $\mathbb{D}=\mathbb{D}_1$.  Our
main result below improves (\ref{p8}), and states that $\mathcal
{B}(p)$ is asymptotically circular as $p\uparrow p_c$.
\begin{theorem}\label{t2}
There exists a constant $\nu>0$, such that
\begin{equation*}
\lim_{p\uparrow p_c}L(p)\mu(p,u)=\nu\quad\mbox{uniformly in
}u\in\mathbb{U}.
\end{equation*}
In particular, when $p\uparrow p_c$, the normalized limit shape
$L(p)^{-1}\mathcal {B}(p)$ tends to the Euclidean disk
$\overline{\mathbb{D}}_{1/\nu}$ in the Hausdorff metric (\ref{e2}).
\end{theorem}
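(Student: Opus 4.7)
The plan is to identify $\nu$ as the time constant of a continuum analog of Bernoulli FPP built on top of the near-critical scaling limit of \cite{GPS18}, and then to exploit the rotational invariance of that limit. Zooming in by the correlation length, at parameter $p$ the configuration on the rescaled lattice $L(p)^{-1}\mathbb{T}$ converges as $p\uparrow p_c$ to the Garban--Pete--Schramm continuum near-critical ensemble on $\mathbb{C}$, from which the construction of Camia et al.\ extracts a countable family of continuum yellow clusters $\{\mathcal{C}_j\}_{j\ge 1}$. I will define a continuum passage time
\[
T^\infty(0,z):=\inf\bigl\{\#\{j:\gamma\cap\mathcal{C}_j\ne\varnothing\}\colon\gamma\text{ a continuous path from }0\text{ to }z\bigr\},
\]
which is the natural limit object for $L(p)^{-1}T(0,L(p)z)$.

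The first key step is to establish, for each fixed $z\in\mathbb{C}$, that $L(p)^{-1}T(0,L(p)z)\to T^\infty(0,z)$ in distribution and in $L^1$ as $p\uparrow p_c$. The upper bound follows by tracking a near-optimal continuum path with a discrete approximant and paying for the discretization via Russo--Seymour--Welsh estimates at the critical scale $L(p)$; the lower bound uses that any discrete geodesic must traverse each macroscopic yellow cluster and that these discrete macroclusters converge to the continuum $\mathcal{C}_j$. Uniform integrability for the $L^1$ upgrade is provided by the Chayes--Chayes--Durrett bound (\ref{p8}), which forces $\mathbf{E}_p[L(p)^{-1}T(0,L(p)z)]=O(|z|)$ uniformly in $p$.

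The second key step is rotational invariance. The full-plane near-critical scaling limit of \cite{GPS18} is invariant under rotations about $0$, descending from the conformal invariance of the critical $\mathrm{CLE}_6$ loop ensemble together with the rotation-equivariant construction of the near-critical perturbation via pivotal sites. Hence $T^\infty(0,Mu)$ is equidistributed over $u\in\mathbb{U}$, and a continuum subadditive argument produces a constant
\[
\nu:=\lim_{M\to\infty}M^{-1}\mathbf{E}\bigl[T^\infty(0,Mu)\bigr]
\]
that is independent of $u$ and positive by (\ref{p8}).

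Transferring back to the discrete time constant: the upper bound $L(p)\mu(p,u)\le M^{-1}\mathbf{E}_p[T(0,ML(p)u)]$ holds by subadditivity and converges to $\nu$ after sending $p\uparrow p_c$ and then $M\to\infty$ using the previous two steps; the matching lower bound requires showing that $T(0,nu)/n$ has essentially reached its asymptotic value by $n\asymp ML(p)$ with $M$ large but fixed, which is obtained by concatenating independent $ML(p)$-scale segments and invoking concentration of the passage time on the near-critical scale. Uniformity in $u\in\mathbb{U}$ is then automatic, because each $L(p)\mu(p,\cdot)$ is a norm on $\mathbb{C}$, the family is equibounded by (\ref{p8}), and pointwise convergence of a sequence of norms on the compact circle to a continuous norm upgrades to uniform convergence by convexity. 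The main obstacle is precisely this interchange of the limits $n\to\infty$ (defining $\mu(p,u)$) and $p\uparrow p_c$ (activating the scaling limit): the GPS machinery naturally operates at the finite scale $L(p)$ while $\mu(p,u)$ is an $n=\infty$ quantity, and bridging the gap requires uniform-in-$p$ fluctuation estimates for $T(0,nu)$ that go beyond the order-of-magnitude statement (\ref{p8}).
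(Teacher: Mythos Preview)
Your overall architecture matches the paper's: build a continuum FPP on the near-critical scaling limit, extract a direction-independent time constant $\nu$ via subadditivity and rotational invariance, then transfer back to $L(p)\mu(p,u)$. Two comments on the setup, and then the substantive gap.

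First, a definitional issue. You speak of ``continuum yellow clusters'' and count how many a path meets. In the subcritical window $p<p_c$ the yellow (closed) phase is supercritical and has a unique infinite cluster, so that count is not well-defined in the limit. The paper (and the Camia--Conijn--Kiss construction you invoke) works with \emph{blue} clusters, which are all finite, and defines $T^\infty$ as the minimal length of a chain of touching continuum blue clusters; by Proposition~\ref{p4} this is the correct continuum analogue of the discrete passage time. Also, the normalization should be $T(0,L(p)z)\to T^\infty(0,z)$, not $L(p)^{-1}T(0,L(p)z)$: the passage time is already of order $|z|$ after spatial rescaling.

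The real gap is the lower bound $\liminf_{p\uparrow p_c} L(p)\mu(p,u)\ge \nu$. Your proposed mechanism, ``concatenating independent $ML(p)$-scale segments and invoking concentration'', produces \emph{upper} bounds on $\mu(p,u)$: subadditivity plus concentration of segment times yields $\mu(p,u)\le \mathbf{E}_p[T(0,ML(p)u)]/(ML(p))+o(1)$, which is the direction you already have. For the lower bound one must control an \emph{arbitrary} geodesic from $0$ to $nu$ with $n\to\infty$, and such a geodesic need not respect any segment decomposition. You correctly flag this interchange-of-limits as the main obstacle, but you do not propose a mechanism that addresses it. The paper resolves it by the Grimmett--Kesten block approach: one first proves a line-to-line estimate (Lemma~\ref{l23}), namely that for $p$ close enough to $p_c$ any crossing of a macroscopic $(n\times Cn)$-rectangle costs at least $(\nu-\epsilon)n$ with high probability; then one shows that \emph{any} path from $0$ to distance $n$ must traverse many such rectangles, and a counting/large-deviation argument (Lemmas~\ref{l25}--\ref{l27}) forces the total cost to be at least $(\nu-5\epsilon)n$ except with exponentially small probability. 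This is the uniform-in-$p$ input that your sketch is missing, and it is not a concentration statement for $T(0,nu)$ itself but rather a geometric lower bound valid for every path.

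Your upper-bound route (subadditivity plus $L^1$ convergence at scale $ML(p)$) is a legitimate alternative to the paper's renormalization via dependent bond percolation and chemical distance (Lemma~\ref{l16}); both work, and yours is arguably more direct once uniform integrability is in hand. The uniformity-in-$u$ argument via equiboundedness of the norms $L(p)\mu(p,\cdot)$ is exactly what the paper does in Section~\ref{final}.
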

From the above theorem, it is easy to extract the following
corollary (see Section \ref{coro} for its proof):
\begin{corollary}\label{cor1}
Suppose $\epsilon\in(0,1/2)$.  There exists a constant
$\nu_{\epsilon}>0$ depending on $\epsilon$, such that
\begin{equation*}
\lim_{p\uparrow p_c}L_{\epsilon}(p)\mu(p,u)=
\nu_{\epsilon}\quad\mbox{uniformly in }u\in\mathbb{U}.
\end{equation*}
In particular, when $p\uparrow p_c$, the normalized limit shape
$L_{\epsilon}(p)^{-1}\mathcal {B}(p)$ tends to the Euclidean disk
$\overline{\mathbb{D}}_{1/\nu_{\epsilon}}$ in the Hausdorff metric
(\ref{e2}).
\end{corollary}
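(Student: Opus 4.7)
The plan is to deduce Corollary \ref{cor1} from Theorem \ref{t2} by showing that the ratio $L_{\epsilon}(p)/L(p)$ converges to a positive constant $\alpha_{\epsilon}$ as $p\uparrow p_c$. Granted this, writing
$$L_{\epsilon}(p)\mu(p,u)=\frac{L_{\epsilon}(p)}{L(p)}\cdot L(p)\mu(p,u),$$
and invoking the uniform convergence in Theorem \ref{t2}, one obtains $L_{\epsilon}(p)\mu(p,u)\to\alpha_{\epsilon}\nu$ uniformly in $u\in\mathbb{U}$, so $\nu_{\epsilon}:=\alpha_{\epsilon}\nu$ works. The claimed Hausdorff convergence of $L_{\epsilon}(p)^{-1}\mathcal{B}(p)$ to $\overline{\mathbb{D}}_{1/\nu_{\epsilon}}$ then follows from the uniform convergence of $L_{\epsilon}(p)\mu(p,\cdot)$ exactly as in the second half of Theorem \ref{t2}.

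To prove convergence of the ratio I would appeal to the near-critical scaling limit of Garban-Pete-Schramm, which already underlies Theorem \ref{t2}. For $\alpha>0$, set
$$g_p(\alpha):=\mathbf{P}_p\bigl[\text{there is a horizontal crossing of }[0,\alpha L(p)]^2\bigr].$$
Rescaling space by $L(p)$ sends the subcritical parameter $p$ to a fixed value of the near-critical parameter, and the results of \cite{GPS18} give convergence $g_p(\alpha)\to g(\alpha)$ as $p\uparrow p_c$ for each $\alpha>0$, where $g$ is the crossing probability of $[0,\alpha]^2$ under the near-critical scaling limit at this parameter. From the definition of $L_{\epsilon}(p)$ as (essentially) the smallest $n$ for which the box-crossing probability drops below the threshold associated to $\epsilon$, the substitution $L_{\epsilon}(p)=\alpha_{\epsilon}(p)L(p)$ turns the defining relation into $g_p(\alpha_{\epsilon}(p))\approx\epsilon$. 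Monotonicity of $g_p$ in $\alpha$, together with continuity and strict monotonicity of $g$ and the endpoint behavior $g(0^+)=1$, $g(\infty)=0$, yield via a sandwich argument that $\alpha_{\epsilon}(p)\to g^{-1}(\epsilon)=:\alpha_{\epsilon}\in(0,\infty)$.

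The main technical obstacle is confirming the analytic properties of $g$. Continuity of $g$ in $\alpha$ follows from continuity of the scaling limit under deformations of the target domain; strict monotonicity follows from the near-critical analog of the fact that enlarging the rectangle strictly increases the probability of a left-right crossing (and can be extracted from the positive density of macroscopic pivotal points in the near-critical scaling limit); the endpoint behavior is a consequence of near-critical RSW at small scales and exponential decay of box-crossing probabilities at subcritical parameters on scales much larger than $L(p)$. All of these ingredients are standard within the GPS framework and the continuum-cluster construction already used in the proof of Theorem \ref{t2}, so once they are in place the reduction is immediate.
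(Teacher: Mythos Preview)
Your approach is essentially the same as the paper's: both factor $L_{\epsilon}(p)\mu(p,u)=(L_{\epsilon}(p)/L(p))\cdot L(p)\mu(p,u)$ and reduce to showing that $L_{\epsilon}(p)/L(p)$ converges to a positive constant, which is the paper's Lemma~\ref{l11}. The paper identifies the limit as the continuum correlation length $L_{\epsilon}^0(-1)$ and establishes the needed analytic properties of your $g(\alpha)=f(-1,\alpha)$ not via pivotals but via the scaling covariance $f(\rho\lambda,R)=f(\lambda,\rho^{4/3}R)$ (Corollary~10.5 of \cite{GPS18}), which transfers the known absolute continuity and strict monotonicity of $f$ in $\lambda$ (from \cite{AS17}) directly to strict monotonicity and continuity in $R$.

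One small slip: $g(0^{+})=1/2$, not $1$, since at small scales the near-critical limit is critical and the self-dual square-crossing probability is $1/2$; this does not affect the argument because $\epsilon\in(0,1/2)$, so the intermediate value theorem still applies.
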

\begin{remark}
We cannot give the explicit values of the limits in Theorem \ref{t2}
and Corollary \ref{cor1}.  In our proof, the subadditive ergodic
theorem plays a crucial role in showing the existence of the limit
but gives no insight for the exact value of the limit.  See
Section \ref{strategy} for a sketch of the proof.
\end{remark}
\begin{remark}
It is believed that for each fixed $p<p_c$, the limit shape
$\mathcal {B}(p)$ is not a Euclidean disk, since the anisotropy of
$\mathbb{T}$ may persist in the limit.  Although we cannot prove
such a statement for all $p<p_c$, we provide a short argument to
show that this is indeed the case so long as $p$ is sufficiently
small.  For general FPP, a theorem of Cox and Kesten (see
\cite{CK81} or Theorem 2.7 in \cite{ADH17}) states that, the time
constant is continuous under weak convergence of the site-weight
distributions of $t(v)$.  This implies that $\mu(p)$ is continuous in $p$,
since for each $p_0\in[0,1]$, the Bernoulli distribution Ber$(p)$
converges weakly to Ber$(p_0)$ as $p$ tends to $p_0$.  Furthermore,
by Remark 6.18 of \cite{Kes86}, for each $u\in\mathbb{U}$, the
function $\mu(p,u)$ is continuous in $p$, and this continuity is
even uniform in $u$.  Therefore, for any $\epsilon>0$, we can choose
$p\in(0,p_c)$ sufficiently small to make the Hausdorff distance
between $\mathcal {B}(p)$ and $\mathcal {B}(0)$ smaller than
$\epsilon$.  So for $p$ small enough, $\mathcal {B}(p)$ is not a
Euclidean disk since $\mathcal {B}(0)$ is a regular hexagon.  We
want to mention that in high dimensions, Kesten proved that the
limit shape of the Eden model is not a Euclidean ball, and it is
conjectured that this is true for all dimensions $d\geq 2$; see,
e.g., Theorem 6.2 and Question 6.1.1 in \cite{ADH17}.
\end{remark}

\subsection{Strategy of proof}\label{strategy}
Let us outline the proof of Theorem \ref{t2}.  Our method involves
three steps:

\textbf{Step 1.} \emph{Construction of the scaling limit of cluster
ensemble.} Based on the scaling limit results in \cite{GPS13,SS11}
for critical percolation under the quad-crossing topology, Camia,
Conijn and Kiss \cite{CCK19} constructed the scaling limit of (open)
clusters for critical percolation, which is a collection of compact
sets called the ``continuum clusters''.  We extend this result to the
near-critical case by using the approach from \cite{CCK19} and the
scaling limit result for near-critical percolation from \cite{GPS18}.

\textbf{Step 2.} \emph{Study of FPP on the continuum cluster ensemble.}  We
need to define FPP for the scaling limit constructed in Step 1. For
this purpose, we call a finite sequence $\Gamma$ of distinct continuum
clusters a (continuum) chain if any two consecutive clusters in $\Gamma$ touch each other, and let $|\Gamma|$ denote the number of clusters in $\Gamma$.
The first-passage time between
two continuum clusters $\mathcal{C},\mathcal{C}'$ is defined by
\begin{equation*}
T(\mathcal{C},\mathcal{C}'):=\inf\{|\Gamma|-1:\Gamma\mbox{ is a chain from $\mathcal{C}$ to $\mathcal{C}'$}\}.
\end{equation*}
This enables us to define the ``point-to-point'' passage
time $T_{m,n}$ (see Figure \ref{chain}) as the first-passage time between two suitably chosen continuum
clusters contained respectively in the disks $\mathbb{D}(m)$ and
$\mathbb{D}(n)$.  Similarly, for Bernoulli FPP on the rescaled lattice $L(p)^{-1}\mathbb{T}$ we denote by
$T^p_{m,n}$ the first-passage time between two
suitably chosen discrete clusters contained respectively in the disks
$\mathbb{D}(m)$ and $\mathbb{D}(n)$.  Next, we show that under a
coupling such that the percolation configuration on
$L(p)^{-1}\mathbb{T}$ converges almost surely to the quad-crossing scaling limit
as $p\uparrow p_c$, the first-passage time $T^p_{0,n}$ converges to
$T_{0,n}$ in probability.  Furthermore, we use the
subadditive ergodic theorem to obtain a law of large numbers for
$T_{0,n}$, that is, $T_{0,n}/n$ tends to a constant $\nu>0$ almost surely as
$n\rightarrow\infty$.

\textbf{Step 3.} \emph{Convergence of $L(p)\mu(p,u)$ to $\nu$.}  In
Step 2 we have showed that for any fixed $n$, the passage time
$T_{0,n}$ is well-approximated by $T^p_{0,n}$ for all $p<p_c$
sufficiently close to $p_c$.  This is a uniformly ``local approximation'' since $n$ is fixed; we need a uniformly ``global
approximation'': For all $p<p_c$ sufficiently close to $p_c$, the passage
time $T_{0,n}$ is well-approximated by $T^p_{0,n}$ for all large
$n$.  Indeed, we need to show that $L(p)\mu(p)$ tends to $\nu$ as
$p\uparrow p_c$.  We shall implement a standard renormalization argument
to show that $\nu$ is an upper bound for the upper limit of
$L(p)\mu(p)$ (see Figure \ref{renorm}):  The results obtained in Step 2 allow us to get that, with high probability, the point-to-point passage
times for paths constrained in suitably chosen blocks are well-approximated by the corresponding times for paths not constrained, leading to a $K$-dependent bond percolation.  In order to show that $\nu$ is also a lower bound for
the lower limit of $L(p)\mu(p)$, we use a ``block approach'', which
was introduced by Grimmett and Kesten in \cite{GK84} to derive exponential
large deviation bounds for passage times.  Finally, due to the fact that
the near-critical scaling limit is invariant under rotations, it is easy to
show that $L(p)\mu(p,u)\rightarrow\nu$ uniformly in $u\in\mathbb{U}$ as
$p\uparrow p_c$, which completes the proof.

\subsection{Relations to previous works}
In this subsection we wish to review some related works in the
literature. For FPP, most works have focused on the subcritical
regime, where $F(0)<p_c$.  In this case $a_{0,n}$ grows linearly in
$n$, and many results have been proved, such as shape theorems,
fluctuations of first-passage times, geometry of geodesics.  On the
other hand, a number of open problems have been proposed.  For
example, although we know the existence of the time constant, it is
an old question to find a non-trivial explicit distribution for
which we can determine the time constant.  See \cite{ADH17} for a
recent survey.

In the following, we list a few works on Bernoulli FPP (on
$\mathbb{T}$); exact asymptotics for this special model are closely related
to the scaling limits of critical and near-critical percolation.
\begin{itemize}
\item \emph{Critical Bernoulli FPP.}  Chayes et al. \cite{CCD86}
proved that $\mathbf{E}_{p_c}a_{0,n}\asymp\log n$.  Subsequently, Kesten
and Zhang \cite{KZ97} showed that $\Var a_{0,n}\asymp\log n$, and
derived a central limit theorem for $a_{0,n}$.  Using the full scaling limit of
critical percolation (i.e., the conformal loop ensemble CLE$_6$; for the general CLE$_{\kappa}$, $8/3\leq\kappa\leq 8$,
see \cite{She09,SW12})
established by Camia and Newman \cite{CN06}, we obtained a law of
large numbers for $a_{0,n}$ in \cite{Yao14}.  The idea is to
define FPP on CLE$_6$ by using ``loop chains'', similarly to the
cluster chains we used in the present paper; the application of
the subadditive ergodic theorem relies on the scaling invariance of
full-plane CLE$_6$, similarly to the case in the present paper that
the application of this theorem relies on the translation invariance
of the continuum cluster ensemble.  Except for these similarities,
the proof here is much more complicated than that in \cite{Yao14}.

In \cite{Yao18} we improved the result in \cite{Yao14}, giving
explicit limit theorem by identifying the exact values of the
constants appearing in the first-order asymptotics of
$a_{0,n},\mathbf{E}_{p_c}a_{0,n}$ and $\Var_{p_c} a_{0,n}$.  (Recently, Damron, Hanson and Lam
\cite{DHL} have extended this result to general critical FPP.)  Analogous limit theorem for critical Bernoulli FPP
starting on the boundary was established in \cite{JY19} by using
SLE$_6$.  In \cite{Yao19}, we constructed different subsequential
limits for $a_{0,n}$, relying on the large deviation estimates on
the nesting of CLE$_6$ loops.

Moreover, as discussed in Section 4 in \cite{Yao18}, it is expected
that as $t\rightarrow \infty$, the outer boundary of $B(t)$,
properly scaled, converges in distribution to a ``typical'' loop of
full-plane CLE$_6$.
\item \emph{Near-critical Bernoulli FPP: supercritical regime.}  In \cite{Yao19},
relying on the limit theorem for the critical case, we derived exact
first-order asymptotics for $T(0,\mathcal {C}_{\infty})$ together
with its expectation and variance, as $p\downarrow p_c$, where
$\mathcal {C}_{\infty}$ is the infinite cluster with 0-time sites.
\item \emph{Near-critical Bernoulli FPP: subcritical regime.} As mentioned earlier, Chayes et al. \cite{CCD86} proved that
$\mu(p)\asymp L(p)^{-1}$ as $p\uparrow p_c$.  This motivates our
present work.
\end{itemize}

Besides the works on Bernoulli FPP mentioned above, we now describe
several works which are related to the present paper in the sense
that the scaling limits are used to extract geometric information
about the original discrete models.
\begin{itemize}
\item Duminil-Copin \cite{DC13} used the scaling limit of
near-critical percolation established in \cite{GPS18} to show that
the Wulff crystal for subcritical percolation on $\mathbb{T}$
converges to a Euclidean disk as $p\uparrow p_c$.  Roughly
speaking, it was showed that the typical shape of a cluster
conditioned to be large becomes round.  Our strategy of the proof of
Theorem \ref{t2} is rather different from \cite{DC13}.
\item  For percolation on $\mathbb{T}$, we contract each blue cluster into a single vertex and define a new edge
between any pair of blue clusters if there is a yellow hexagon
touching both of them.  Then we obtain a random graph called ``cluster
graph''.  In \cite{Yao19}, we applied the limit theorem for critical
Bernoulli FPP to study the graph distance in the cluster graph at
criticality.  The discrete cluster chains we used in the present
paper are (self-avoiding) paths in the cluster graph.  In the
near-critical regime, we will prove that with high probability the
first-passage time between two large blue clusters is equal to their
graph distance in the cluster graph; the proof uses some techniques
we developed for the cluster graph at criticality in \cite{Yao19}.
\item Based on the scaling limit of
near-critical percolation in \cite{GPS18}, Garban, Pete and Schramm \cite{GPS18b} constructed
the scaling limits of the minimal spanning tree and the invasion
percolation tree.  In the proof they also used the cluster graph,
similarly to that we described above but with some differences; see
Fig. 1 in \cite{GPS18b} and the paragraph above it.  Except for this
similarity, our proof is quite different from that in \cite{GPS18b}.
\end{itemize}

\subsection{Organization of the paper}
The rest of the paper is organized as follows.  In Section \ref{pre}, we introduce the notation and basic definitions, including
correlation length, quad-crossing topology and near-critical scaling limit, and collect
some results about critical and near-critical percolation that will be used.  Section \ref{continuum} is devoted to the
construction of the continuum cluster ensemble, which is the scaling limit of the collection of blue clusters for near-critical percolation.  In Section \ref{fpp} we define FPP on continuum cluster ensembles,
and obtain a law of large numbers for the point-to-point
passage times of this continuum FPP.  We also show that some quantities of the discrete FPP approximate their corresponding quantities
of the continuum FPP as $p\uparrow p_c$.  In Section \ref{global}, based on the results for the continuum FPP together with the asymptotics of the corresponding discrete quantities, we use the renormalization method to prove our main result.

\section{Notation and preliminaries}\label{pre}
Throughout this paper, $C, K, C_j$ or $K_j$ stands for a positive
constant that may change from line to line according to the context.
$\mathbb{N}=\{1,2,\ldots\}$ denotes the set of natural integers and
$\mathbb{Z}_+=\{0\}\cup\mathbb{N}$.  We identify the plane
$\mathbb{R}^2$ with the set $\mathbb{C}$ of complex numbers in the
usual way.

For two positive functions $f$ and $g$ from a set $\mathcal {X}$ to
$(0,\infty)$, we write $f(x)\asymp g(x)$ to indicate that
$f(x)/g(x)$ is bounded away from 0 and $\infty$, uniformly in $x\in
\mathcal {X}$.

A \textbf{circuit} is a path $(v_1,\ldots,v_n)$ with $n\geq 3$, such
that $v_1$ and $v_n$ are neighbors.  Note that the bonds
$(v_1,v_2),\ldots,(v_n,v_1)$ of the circuit form a Jordan curve, and
sometimes the circuit is viewed as this curve.

For a rectangle of the form $R=[x_1,x_2]\times[y_1,y_2]$, we call a
path $(v_0,v_1,\ldots,v_k)$ of $\mathbb{T}$ a \textbf{left-right
(resp. top-bottom) crossing} of $R$, if $v_1,\ldots,v_{k-1}\in R$,
and the line segments $\overline{v_0v_1}$ and
$\overline{v_{k-1}v_k}$ intersect the left and right (resp. top and
bottom) sides of $R$, respectively.  For a left-right crossing of
$R$, if all the sites in it are blue (resp. yellow), we call it a
\textbf{blue (resp. yellow) left-right crossing} of $R$.

For $r>0$, define the box $\Lambda_r:=[-r,r]^2$.  For $0<r<R$, define
the annulus $A(r,R):=\Lambda_R\backslash\Lambda_r$.  For
$z\in\mathbb{C}$ and $\theta\in[0,2\pi]$, we set $\Lambda_r^{\theta}(z):=z+e^{i\theta}\cdot\Lambda_r$ and
$A^{\theta}(z;r,R):=z+e^{i\theta}\cdot A(r,R)$.  Write $\Lambda_r(z):=\Lambda_r^0(z)$ and
$A(z;r,R):=A^0(z;r,R)$.

The so-called arm events play a central role in studying
near-critical percolation.  We write 0 and 1 for ``blue'' and
``yellow'', respectively.  A \textbf{color sequence} $\sigma=(\sigma_1,\sigma_2,\dots,\sigma_k)$ is an
element of $\{0,1\}^k$ with $k\geq 1$, and its length $|\sigma|$ is
$k$.  For ease of notation,
we write
$(\sigma_1\sigma_2\cdots\sigma_k):=(\sigma_1,\sigma_2,\dots,\sigma_k)$.  We say that a color sequence is \textbf{polychromatic} if the
sequence contains at least one 0 and one 1.
We identify two sequences if they are the same up to a cyclic
permutation.  Suppose $\theta\in[0,2\pi]$.  For an annulus $A^{\theta}(z;r,R)$, we denote by $\mathcal
{A}_{\sigma}^{\theta}(z;r,R)$ the event that there exist $|\sigma|$ disjoint
monochromatic paths called \textbf{arms} in $A^{\theta}(z;r,R)$ connecting
the two boundary pieces of $A^{\theta}(z;r,R)$, whose colors are those
prescribed by $\sigma$, when taken in counterclockwise order.  The half-plane arm events are defined similarly:  We denote by
$\mathcal
{A}_{\sigma}^{\theta,+}(z;r,R)$ the event that $\mathcal
{A}_{\sigma}^{\theta}(z;r,R)$ occurs with the arms contained in the half-plane $z+e^{i\theta}\cdot\{u\in\mathbb{C}:\Im u\geq 0\}$.  More generally, given $\vartheta\in(0,2\pi)$, we denote by $\mathcal{A}_{\sigma}^{\theta,\vartheta}(z;r,R)$ the wedge arm event that $\mathcal
{A}_{\sigma}^{\theta}(z;r,R)$ occurs with the arms contained in the wedge $W(z;\theta,\vartheta):=z+e^{i\theta}\cdot\{u\in\mathbb{C}:\arg(u)\in[-\vartheta/2,\vartheta/2]\}$.

For ease of notation, we write $\mathcal {A}_{\sigma}(z;r,R):=\mathcal {A}_{\sigma}^{0}(z;r,R)$ when $\theta=0$, and write
$\mathcal {A}_{\sigma}(r,R):=\mathcal {A}_{\sigma}(0;r,R)$ when $z=0$.  Analogous abbreviations apply to $\mathcal
{A}_{\sigma}^{\theta,+}(z;r,R)$ and $\mathcal
{A}_{\sigma}^{\theta,\vartheta}(z;r,R)$.  For any $r\geq 1$, we let $\mathcal
{A}_{\sigma}(z;r,r)$ be the entire sample space.  Write  $\mathcal {A}_4:=\mathcal {A}_{(0101)}$ and $\mathcal
{A}_6:=\mathcal {A}_{(011011)}$.

\subsection{Correlation length}\label{correlation}
First, let us define the \textbf{correlation length} $L(p)$ that we
mainly work with in this paper: For any $p<p_c$, we set
\begin{equation*}
L(p):=\inf\left\{R\geq 1:R^2\mathbf{P}_{p_c}[\mathcal
{A}_4(1,R)]\geq\frac{1}{p_c-p}\right\},
\end{equation*}
where we let $L(p)=1/2$ if the above set is empty.  We chose to work with $L(p)$
because our proof is based on Corollary 1.7 of \cite{GPS18} (see Theorem \ref{t6} below) which gives the scaling limit of
near-critical percolation on $L(p)^{-1}\mathbb{T}$ as $p\uparrow p_c$.

Now we introduce another definition of correlation length: For each
$\epsilon\in(0,1/2)$ and $p<p_c$, let
\begin{equation*}
L_{\epsilon}(p):=\inf\{R\geq 1: \mathbf{P}_p[\mbox{there is a blue
left-right crossing of $[0,R]^2$}]\leq\epsilon\}.
\end{equation*}
The particular choice of $\epsilon$ is not important in the above
definition.  Indeed, for any $\epsilon,\epsilon'\in(0,1/2)$, we have $L_{\epsilon}(p)\asymp L_{\epsilon'}(p)$; see, e.g., Corollary 37 of \cite{Nol08}.  We refer
the interest reader to Section 7 of \cite{Nol08} for three natural
definitions of correlation length (called ``characteristic length''
there), including $L_{\epsilon}(p)$.

Kesten (see Proposition 34 of \cite{Nol08})
proved that
\begin{equation*}
(p_c-p)L_{\epsilon}(p)^2\mathbf{P}_{p_c}[\mathcal
{A}_4(1,L_{\epsilon}(p))]\asymp 1\qquad\mbox{as $p\uparrow p_c$}.
\end{equation*}
This together with the quasi-multiplicativity (\ref{e71}) and the
inequality (\ref{e9}) at $p_c$ implies that for any
$\epsilon\in(0,1/2)$,
\begin{equation*}
L(p)\asymp L_{\epsilon}(p)\qquad\mbox{as $p\uparrow p_c$.}
\end{equation*}

It is well known that $L(p)\rightarrow +\infty$ when $p\uparrow p_c$.  For convenience, we will take the convention that
$L(p_c)=L_{\epsilon}(p_c)=+\infty$.  In the following, the
expression ``for any $1\leq R\leq L(p)$'' must be interpreted as ``for any $R\geq 1$'' when $p=p_c$.  For $R\geq 1$, define
\begin{equation*}
p(R):=\inf\{p:L(p')\geq R\mbox{ for all $p'\in [p,p_c]$}\}.
\end{equation*}
Note that $p(R)\in(0,p_c)$ for all $R\geq 1$, and $p(R)\uparrow p_c$ as $R\rightarrow\infty$.

\subsection{Classical results for planar percolation}\label{classic}
We assume that the reader is familiar with the FKG inequality, the
BK (van den Berg-Kesten) inequality, Reimer's inequality
\cite{Rei00}, and the RSW (Russo-Seymour-Welsh) technology.  Here we
collect some classical results in critical and near-critical percolation which
will be used.  See, e.g., Section 2.2 in
\cite{BKN18}, Section 2.2 in \cite{BN21}, and \cite{Kes87,Nol08,SW01,Wer09}.
\begin{enumerate}[(i)]
\item \emph{RSW bounds}.  For any $k,K\geq 1$, there exists a
constant $\delta=\delta(k,K)>0$, such that for all $p\in(p(1),p_c]$ and
$1\leq n\leq KL(p)$,
\begin{equation*}
\mathbf{P}_p[\mbox{there is a blue left-right crossing of $[0,kn]\times[0,n]$}]\geq\delta.
\end{equation*}
\item \emph{A-priori bounds on 1-arm events}.  There exist constants
$\lambda_1,\lambda_1'>0$, such that for any $K\geq 1$, there are
constants $C_1(K),C_2(K)>0$, such that for all $p\in(p(1),p_c]$ and
$1\leq r<R\leq KL(p)$,
\begin{equation}\label{e78}
C_1\left(\frac{r}{R}\right)^{\lambda_1'}\leq \mathbf{P}_p[\mathcal
{A}_{(0)}(r,R)]\leq\mathbf{P}_p[\mathcal {A}_{(1)}(r,R)]\leq
C_2\left(\frac{r}{R}\right)^{\lambda_1}.
\end{equation}
\item \emph{Quasi-multiplicativity}.  For any color sequence $\sigma$ and $K\geq 1$, there
exist constants $C_1(|\sigma|,K)$, $C_2(|\sigma|,K)>0$, such that for all $p\in(p(1),p_c]$ and $1\leq r_1<r_2<r_3\leq KL(p)$,
\begin{equation}\label{e71}
C_1\mathbf{P}_p[\mathcal {A}_{\sigma}(r_1,r_2)]\mathbf{P}_p[\mathcal
{A}_{\sigma}(r_2,r_3)]\leq \mathbf{P}_p[\mathcal
{A}_{\sigma}(r_1,r_3)]\leq C_2\mathbf{P}_p[\mathcal
{A}_{\sigma}(r_1,r_2)]\mathbf{P}_p[\mathcal {A}_{\sigma}(r_2,r_3)].
\end{equation}
\item \emph{Stability for arm events near criticality}.
For any color sequence $\sigma$ and $K\geq 1$, there exist constants
$C_1(|\sigma|,K)$, $C_2(|\sigma|,K)>0$, such that for all $p\in(p(1),p_c)$
and $1\leq r<R\leq KL(p)$,
\begin{equation}\label{e7}
C_1\mathbf{P}_{p_c}[\mathcal
{A}_{\sigma}(r,R)]\leq\mathbf{P}_p[\mathcal {A}_{\sigma}(r,R)]\leq
C_2\mathbf{P}_{p_c}[\mathcal {A}_{\sigma}(r,R)].
\end{equation}
\item  \emph{Exponential decay with respect to $L(p)$}.  There exist constants $C_1,C_2>0$, such that for all
$p\in(p(1),p_c)$ and $R\geq L(p)$,
\begin{equation}\label{e40}
\mathbf{P}_p[\mbox{$\exists$ a yellow circuit surrounding 0 in
$A(R,2R)$}]\geq 1-C_1\exp\left(-C_2\frac{R}{L(p)}\right).
\end{equation}
This follows from FKG and (7.23) in \cite{Nol08}. (See also (17) of \cite{Yao19} for the corresponding inequality at $p>p_c$.)
\item \emph{Lower bound on the 4-arm exponent}. There exist constants $\lambda_4,C>0$, such that for all $p\in(p(1),p_c]$ and $1\leq r<R\leq L(p)$,
\begin{equation}\label{e9}
\mathbf{P}_p[\mathcal {A}_4(r,R)]\geq
C\left(\frac{r}{R}\right)^{2-\lambda_4}.
\end{equation}
\item  \emph{Upper bound on the 6-arm exponent}.
There is a $\lambda_6>0$, such that for any $K\geq 1$ and
polychromatic color sequence $\sigma$ with $|\sigma|=6$, there
exists a constant $C(K)>0$, such that for all $p\in(p(1),p_c]$ and
$1\leq r<R\leq KL(p)$,
\begin{equation}\label{e67}
\mathbf{P}_p[\mathcal {A}_{\sigma}(r,R)]\leq
C\left(\frac{r}{R}\right)^{2+\lambda_6}.
\end{equation}
\item \emph{Upper bounds on half-plane 2-arm and wedge 3-arm events}.
For any $K\geq 1$ and color sequences $\sigma_2,\sigma_3$ with
$|\sigma_2|=2$ and $|\sigma_3|=3$, there exists a constant $C(K)>0$,
such that for all $p\in(p(1),p_c]$, $1\leq r<R\leq KL(p)$ and
$\theta\in[0,2\pi]$,
\begin{align}
&\mathbf{P}_p\left[\mathcal {A}_{\sigma_2}^{\theta,+}(r,R)\right]\leq
C\left(\frac{r}{R}\right),\label{e79}\\
&\mathbf{P}_p\left[\mathcal {A}_{\sigma_3}^{\theta,+}(r,R)\right]\leq
C\left(\frac{r}{R}\right)^2.\label{e72}
\end{align}
See, e.g., Lemma 6.8 in \cite{Sun11} and Appendix A in \cite{LSW02} for the critical case.  This combined with the stability for arm events near criticality gives the above inequalities for the near-critical case.  Moreover, given any fixed $\vartheta\in(0,2\pi)$ and $\epsilon\in(0,1)$, there exists a constant $C(\vartheta,\epsilon)>0$ such that for all $p\in(p(1),p_c]$, $1\leq r<R\leq L(p)$ and $\theta\in[0,2\pi]$,
\begin{equation}
\mathbf{P}_p\left[\mathcal {A}_{\sigma_3}^{\theta,\vartheta}(r,R)\right]\leq
C\left(\frac{r}{R}\right)^{2\pi/\vartheta-\epsilon}.\label{e23}
\end{equation}
The conformal invariance of the scaling limit gives $\mathbf{P}_{p_c}[\mathcal {A}_{\sigma_3}^{\theta,\vartheta}(r,R)]=(\mathbf{P}_{p_c}[\mathcal {A}_{\sigma_3}^{\theta,\pi}(r,R)])^{\pi/\vartheta+o(1)}$, which combined with $(\ref{e72})$ implies the above inequality in the critical case.  Then
the stability for (wedge) arm events near criticality yields (\ref{e23}).
\item \emph{Upper bounds on arm events with monochromatic color sequence}.
For any polychromatic color sequence $\sigma$, there exist
$\epsilon,C>0$ (depending on $|\sigma|$), such that for all
$p\in(p(1),p_c]$ and $1\leq r<R\leq L(p)$,
\begin{equation}\label{e92}
\mathbf{P}_p[\mathcal {A}_{(\underbrace{1\cdots
1}_{|\sigma|})}(r,R)]\leq
C\left(\frac{r}{R}\right)^{\epsilon}\mathbf{P}_p[\mathcal
{A}_{\sigma}(r,R)].
\end{equation}
This inequality at $p_c$ (which was used in \cite{Yao19}
as Lemma 1) follows from the proof of Theorem 5 in \cite{BN11}; see
in particular Step 1 of the proof.   Combining the inequality at
$p_c$ and (\ref{e7}) yields (\ref{e92}).
\item When $p\uparrow p_c$,
\begin{equation}\label{e14}
L(p)=(p_c-p)^{-4/3+o(1)}.
\end{equation}
\end{enumerate}
\subsection{Space of quad-crossings}\label{quad}
To describe the scaling limits of planar percolation, Schramm and
Smirnov introduced the quad-crossing topology in \cite{SS11}.  Let
us briefly recall this topology in this subsection.  We shall use the
notation and definitions from \cite{GPS18}.

When taking the scaling limit of percolation on the whole plane, it
is convenient to compactify $\mathbb{C}$ into $\hat{\mathbb{C}}:=
\mathbb{C}\cup\{\infty\}$ (i.e., the Riemann sphere) as follows.
First, we replace the Euclidean metric with a distance function
$\Delta(\cdot,\cdot)$ defined on $\mathbb{C}\times\mathbb{C}$ by
\begin{equation*}
\Delta(x,y):=\inf_{\varphi}\int(1+|\varphi(s)|^2)^{-1}ds,
\end{equation*}
where the infimum is over all smooth curves $\varphi(s)$ joining $x$
with $y$, parameterized by arc length $s$.  This metric is equivalent to the Euclidean metric
in bounded regions.  Then, we add a single point $\infty$ at
infinity to get the compact space $\hat{\mathbb{C}}$ which is
isometric, via stereographic projection, to the two-dimensional
sphere.

Let $D\subset\hat{\mathbb{C}}$ be open.  A \textbf{quad} in the
domain $D$ can be considered as a homeomorphism $Q$ from $[0,1]^2$
into $D$.  The space of all quads in $D$, denoted by $\mathcal
{Q}_D$, can be equipped with the following metric:
\begin{equation*}
d(Q_1,Q_2):=\inf_{\phi}\sup_{z\in\partial[0,1]^2}|Q_1(z)-Q_2(\phi(z))|,
\end{equation*}
where the infimum is over all homeomorphisms $\phi:
[0,1]^2\rightarrow[0,1]^2$ which preserve the four corners of the
square.  A \textbf{crossing} of a quad $Q$ is a connected closed
subset of $Q([0,1]^2)$ that intersects both $Q(\{0\}\times[0,1])$
and $Q(\{1\}\times[0,1])$.  From the point of view of crossings,
there is a natural partial order on $\mathcal {Q}_D$: We write
$Q_1\leq Q_2$ if any crossing of $Q_2$ contains a crossing of $Q_1$.
Furthermore, we write $Q_1<Q_2$ if there are open neighborhoods
$\mathcal {N}_i$ of $Q_i$ for $i\in\{1,2\}$, such that $N_1\leq N_2$
holds for any $N_i\in\mathcal {N}_i$.  We say that a subset
$\mathcal {S}\subset\mathcal {Q}_D$ is \textbf{hereditary} if,
whenever $Q\in \mathcal {S}$ and $Q'\in\mathcal {Q}_D$ satisfies
$Q'<Q$, we have $Q'\in \mathcal {S}$.  The collection of all closed
hereditary subsets of $\mathcal {Q}_D$ will be denoted by
$\mathscr{H}_D$.

By introducing a natural topology, $\mathscr{H}_D$ can be made into
a compact metric space.  Indeed, let
\begin{align*}
&\boxminus_{Q}:=\{\mathcal {S}\in\mathscr{H}_D: Q\in\mathcal {S}\}\qquad\mbox{for any }Q\in\mathcal {Q}_D,\\
&\boxdot_{\mathcal {U}}:=\{\mathcal {S}\in\mathscr{H}_D: \mathcal
{U}\cap\mathcal {S}=\emptyset\}\qquad\mbox{for any open subset
}\mathcal {U}\subset\mathcal {Q}_D.
\end{align*}
Then we endow $\mathscr{H}_D$ with the topology $\mathscr{T}_D$
which is the minimal topology that contains every $\boxminus_{Q}^c$
and $\boxdot_{\mathcal {U}}^c$ as open sets.  It was proved in
\cite{SS11}, Theorem 1.13, that for any nonempty open subset
$D\subset\hat{\mathbb{C}}$, the topological space
$(\mathscr{H}_D,\mathscr{T}_D)$ is a compact metrizable Hausdorff
space.  In particular, $(\mathscr{H}_D,\mathscr{T}_D)$ is a Polish
space.  Furthermore, for any dense $\mathcal {Q}_0\subset\mathcal
{Q}_D$, the events $\{\boxminus_{Q}:Q\in\mathcal {Q}_0\}$ generate
the Borel $\sigma$-field of $\mathscr{H}_D$.  An arbitrary metric
generating the topology $\mathscr{T}_D$ will be denoted by
$d_{\mathscr{H}}$. The above compactness property implies that (see
Corollary 1.15 of \cite{SS11}), the space of Borel probability
measures of $(\mathscr{H}_D,\mathscr{T}_D)$, equipped with the weak*
topology is a compact metrizable Hausdorff space.

When $D=\hat{\mathbb{C}}$, we write
$\mathscr{H}:=\mathscr{H}_{\hat{\mathbb{C}}}$ and $\mathscr{T}:=\mathscr{T}_{\hat{\mathbb{C}}}$.  With a slight abuse of notation, when we
refer to $Q$ as a subset of $\hat{\mathbb{C}}$ in the following, we consider its
range $Q([0,1]^2)\subset\hat{\mathbb{C}}$.

Note that any discrete percolation configuration
$\omega_p^{\eta}:=\eta\omega_p$ on $\eta\mathbb{T}$, considered as a
union of blue hexagons in the plane, naturally induce an element in
$\mathscr{H}$: the set of all quads for which $\omega_p^{\eta}$
contains a crossing.  By a slight abuse of notation, we will still
denote by $\omega_p^{\eta}$ the point in $\mathscr{H}$ corresponding
to the percolation configuration $\omega_p^{\eta}$.  It follows that
$\omega_p^{\eta}$ induces a probability measure on $\mathscr{H}$,
denoted by $\mathbf{P}_p^{\eta}$.

\subsection{Near-critical scaling limit}\label{s2}
As in \cite{GPS18}, we define the following near-critical parameter
scale: For $\eta>0$ and $\lambda\in\mathbb{R}$, we set
\begin{equation*}
p_{\lambda}(\eta):=p_c+\lambda\frac{\eta^2}{\alpha_4^{\eta}(\eta,1)},
\end{equation*}
where $\alpha_4^{\eta}(r,R)$ stands for the probability of the
alternating 4-arm event in $A(r,R)$ for critical site percolation on
$\eta\mathbb{T}$.

Recall that for each $p\in [0,1]$, $\omega_p$ stands for Bernoulli site
percolation on $\mathbb{T}$ with intensity $p$, and $\mathbf{P}_p$
stands for the law of $\omega_p$.  The following are two natural ways to define near-critical
percolation on rescaled lattices:
\begin{itemize}
\item For $\eta>0$ and $\lambda\in\mathbb{R}$, let $\omega_{p_{\lambda}(\eta)}^{\eta}$ denote the percolation
configuration on $\eta\mathbb{T}$ with intensity
$p_{\lambda}(\eta)$, and let $\mathbf{P}_{p_{\lambda}(\eta)}^{\eta}$
denote the law of $\omega_{p_{\lambda}(\eta)}^{\eta}$.
\item For $p<p_c$, let $\omega_p^{L(p)^{-1}}=L(p)^{-1}\omega_p$ denote the percolation
configuration on $L(p)^{-1}\mathbb{T}$ with intensity $p$, and let
$\mathbf{P}_p^{L(p)^{-1}}$ denote the law of $\omega_p^{L(p)^{-1}}$. (Such near-critical
percolation for $p>p_c$ can be defined analogously.)
\end{itemize}
Note that for $\mathbf{P}_{p_{\lambda}(\eta)}^{\eta}$, the intensity
$p_{\lambda}(\eta)$ is a function of the mesh size $\eta$, while for
$\mathbf{P}_p^{L(p)^{-1}}$, the mesh size $L(p)^{-1}$ is a function
of the intensity $p$.  As discussed in Section \ref{quad}, we also
view $\omega_{p_{\lambda}(\eta)}^{\eta}$ (resp.
$\omega_p^{L(p)^{-1}}$) as an element in $\mathscr{H}$, and view
$\mathbf{P}_{p_{\lambda}(\eta)}^{\eta}$ (resp.
$\mathbf{P}_p^{L(p)^{-1}}$) as the probability measure on
$\mathscr{H}$ induced by $\omega_{p_{\lambda}(\eta)}^{\eta}$ (resp.
$\omega_p^{L(p)^{-1}}$).

The following theorem states that for fixed $\lambda$, the
near-critical percolation $\omega_{p_{\lambda}(\eta)}^{\eta}$ has a
scaling limit as $\eta\rightarrow 0$.
\begin{theorem}[Theorem 1.4 and Corollary 10.5 in \cite{GPS18}]\label{t4}
Fix $\lambda\in \mathbb{R}$.  As $\eta\rightarrow 0$, the
near-critical percolation $\omega_{p_{\lambda}(\eta)}^{\eta}$
converges in law in $(\mathscr{H},d_{\mathscr{H}})$ to a limiting
random percolation configuration, denoted by $\omega^{\infty}(\lambda)$.
Moreover, the law of $\omega^{\infty}(\lambda)$, denoted by
$\mathbf{P}^{\infty,\lambda}$, is invariant under translations and
rotations.
\end{theorem}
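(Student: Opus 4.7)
The plan is to realize $\omega^0(\lambda)$ as the output of a Markovian dynamics acting on the critical scaling limit, driven by a Poisson flow on a continuum ``pivotal measure''. The starting point is the Schramm--Smirnov quad-crossing convergence theorem for critical site percolation on $\mathbb{T}$, which gives a limit $\omega^0(0)$ of $\omega_{p_c}^{\eta}$ in $(\mathscr{H},d_{\mathscr{H}})$ as $\eta\to 0$, together with rotation invariance of this limit inherited from Smirnov's conformal invariance of crossing probabilities.

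First I would set up a standard monotone coupling on the discrete level: to each $v\in\eta\mathbb{T}$ assign an independent uniform $U_v\in[0,1]$ and declare $v$ blue iff $U_v\le p$. Starting from the critical configuration, as $p$ crosses $p_c$ a site $v$ flips at a time of order $\eta^2/\alpha_4^{\eta}(\eta,1)$ in the $\lambda$-parametrization. In this coupling, flipping a single site actually changes a macroscopic crossing event only when $v$ is 4-arm pivotal for the corresponding quad. This motivates introducing, for each fixed quad $Q$, the discrete pivotal measure $\mu^{\eta}=\sum_v \eta^2/\alpha_4^{\eta}(\eta,1)\,\delta_v\,\mathbf{1}_{\{v \text{ is 4-arm pivotal for } Q\}}$, normalized so that $\mathbf{E}[\mu^{\eta}(Q)]$ stays bounded thanks to Kesten's scaling relation.

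The core of the proof is then to show convergence $\mu^{\eta}\to\mu^0$ jointly with $\omega_{p_c}^{\eta}\to\omega^0(0)$, where $\mu^0$ is a locally finite random measure, measurable with respect to $\omega^0(0)$ and supported on its ``pivotal set''. This is done by combining second-moment estimates for $\mu^{\eta}(Q)$, quasi-multiplicativity (\ref{e71}), the arm-event stability (\ref{e7}), and the 6-arm upper bound (\ref{e67}) to preclude pivotals from clustering at small scales; these estimates deliver tightness and identification of the limit as a measurable functional of $\omega^0(0)$. Given $\mu^0$, sample a Poisson point process on $\mathbb{C}\times\mathbb{R}$ with intensity $\mu^0\otimes d\lambda$, and for each space-time atom $(x,s)$ with $|s|\le|\lambda|$ flip the state at $x$; the resulting configuration is $\omega^0(\lambda)$. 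Showing that this continuum flip dynamics is well defined despite the possibility of infinitely many pivotal events accumulating as one approaches small scales is the main technical obstacle, and requires a careful stability analysis: one must prove that only finitely many flips at each macroscopic scale affect a given quad crossing, a statement whose proof ultimately rests on the strict inequality $2+\lambda_6>2$ in (\ref{e67}).

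Once the continuum dynamics is constructed and shown to be the scaling limit of the discrete coupling uniformly on compact $\lambda$-intervals, weak convergence $\omega_{p_{\lambda}(\eta)}^{\eta}\Rightarrow\omega^0(\lambda)$ follows from tightness in the compact metric space $(\mathscr{H},d_{\mathscr{H}})$ (Corollary 1.15 of \cite{SS11}) together with uniqueness of subsequential limits identified through the crossing probabilities of a countable dense family of quads, which form a separating class for the Borel $\sigma$-field of $\mathscr{H}$. Translation invariance of $\mathbf{P}^{0,\lambda}$ is inherited from the approximate translation invariance of $\mathbf{P}_{p_{\lambda}(\eta)}^{\eta}$ under $\eta\mathbb{Z}^2$-shifts in the limit $\eta\to 0$. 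Rotation invariance is slightly more delicate: it follows because $\omega^0(0)$ is rotation invariant, the pivotal measure $\mu^0$ is a rotation-equivariant measurable functional of $\omega^0(0)$, and the Poisson flip dynamics is defined purely through $\mu^0$, so rotation invariance propagates to $\omega^0(\lambda)$ for every $\lambda$.
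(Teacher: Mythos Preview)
Your outline is a faithful sketch of the Garban--Pete--Schramm argument: construct the continuum pivotal measure $\mu^0$ as a measurable functional of the critical scaling limit, run a Poisson flip dynamics on it, and show this dynamics is the limit of the discrete monotone coupling. The key ingredients you name (second-moment bounds, quasi-multiplicativity, the 6-arm exponent exceeding $2$, stability of the dynamics) are indeed the ones that drive the proof in \cite{GPS18}.

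However, note that the paper under review does \emph{not} prove Theorem~\ref{t4}. It is stated as a citation of Theorem~1.4 and Corollary~10.5 of \cite{GPS18} and used as a black box; the only commentary the paper adds is the remark that its normalization $\omega_{p_{\lambda}(\eta)}^{\eta}$ differs slightly from the $\omega_{\eta}^{nc}(\lambda)$ of \cite{GPS18}, with $\omega^0(\lambda)$ here corresponding to $\omega_{\infty}^{nc}(2\lambda)$ there. So there is no ``paper's own proof'' to compare against---your proposal is essentially a summary of the proof in the cited reference, not of anything in this paper.
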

Note that our definition of $\omega_{p_{\lambda}(\eta)}^{\eta}$ is
slightly different from $\omega_{\eta}^{nc}(\lambda)$ in Theorem 1.4 of \cite{GPS18} (see Section 1.2 in
\cite{GPS18} for the definition), but this makes no essential difference; the scaling
limit $\omega^{\infty}(\lambda)$ in Theorem \ref{t4} is denoted by
$\omega_{\infty}^{nc}(2\lambda)$ in \cite{GPS18}.  We want to
mention that in \cite{AS17,BKN18,DC13}, the authors also used the scaling limit result for
$\omega_{p_{\lambda}(\eta)}^{\eta}$.

Similarly to Theorem \ref{t4} on
$\omega_{p_{\lambda}(\eta)}^{\eta}$, the following theorem states
that the near-critical percolation $\omega_p^{L(p)^{-1}}$ also has a
scaling limit as $p\uparrow p_c$, which is a key input for the proof
of our main result.
\begin{theorem}[Corollary 1.7 of \cite{GPS18}]\label{t6}
As $p\uparrow p_c$, the near-critical percolation
$\omega_p^{L(p)^{-1}}$ converges in law in
$(\mathscr{H},d_{\mathscr{H}})$ to $\omega^{\infty}(-1)$.
\end{theorem}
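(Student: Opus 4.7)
The strategy is to reduce Theorem \ref{t6} to Theorem \ref{t4} via an explicit reparametrization. Set $\eta(p) := L(p)^{-1}$ and let $\lambda(p)$ be the unique real number satisfying $p = p_{\lambda(p)}(\eta(p))$; then $\omega_p^{L(p)^{-1}}$ and $\omega^{\eta(p)}_{p_{\lambda(p)}(\eta(p))}$ are literally the same $\mathscr{H}$-valued random variable, since both are Bernoulli$(p)$ site percolation on $L(p)^{-1}\mathbb{T}$. Using the definition of $p_{\lambda}(\eta)$ together with the scale invariance in law at criticality, which yields $\alpha_4^{L(p)^{-1}}(L(p)^{-1}, 1) = \mathbf{P}_{p_c}[\mathcal{A}_4(1, L(p))]$, a direct computation gives
\[
\lambda(p) = -(p_c - p)\,L(p)^2\,\mathbf{P}_{p_c}[\mathcal{A}_4(1, L(p))].
\]

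The first key step is to verify $\lambda(p) \to -1$ as $p \uparrow p_c$. The defining infimum for $L(p)$ gives the lower bound $L(p)^2\,\mathbf{P}_{p_c}[\mathcal{A}_4(1, L(p))] \geq 1/(p_c - p)$, so $|\lambda(p)| \geq 1$. For the matching upper bound, fix small $\delta > 0$ and note that $L(p) - \delta$ lies strictly below the infimum, so
\[
(L(p) - \delta)^2\,\mathbf{P}_{p_c}[\mathcal{A}_4(1, L(p) - \delta)] < \frac{1}{p_c - p}.
\]
Monotonicity of $R \mapsto \mathbf{P}_{p_c}[\mathcal{A}_4(1, R)]$ (a larger annulus is a stricter event) gives $\mathbf{P}_{p_c}[\mathcal{A}_4(1, L(p))] \leq \mathbf{P}_{p_c}[\mathcal{A}_4(1, L(p) - \delta)]$, whence $L(p)^2\,\mathbf{P}_{p_c}[\mathcal{A}_4(1, L(p))] \leq (L(p)/(L(p) - \delta))^{2}/(p_c - p)$. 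Letting $\delta \downarrow 0$ and using $L(p) \to \infty$ forces $|\lambda(p)| \to 1$, and the sign condition $\lambda(p) < 0$ gives $\lambda(p) \to -1$.

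The second step is to upgrade the fixed-$\lambda$ convergence of Theorem \ref{t4} to joint convergence along the trajectory $(\eta(p), \lambda(p)) \to (0, -1)$. I would do this by sandwiching: for any fixed $\epsilon > 0$ and $p$ close enough to $p_c$ that $\lambda(p) \in [-1 - \epsilon, -1 + \epsilon]$, the standard monotone coupling of Bernoulli configurations at the three intensities $p_{-1 - \epsilon}(\eta(p))$, $p$, and $p_{-1 + \epsilon}(\eta(p))$ (all on the same lattice $L(p)^{-1}\mathbb{T}$) gives
\[
\omega^{\eta(p)}_{p_{-1 - \epsilon}(\eta(p))} \subseteq \omega_p^{L(p)^{-1}} \subseteq \omega^{\eta(p)}_{p_{-1 + \epsilon}(\eta(p))}
\]
as elements of $\mathscr{H}$ (more blue sites gives more crossed quads). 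By compactness of $(\mathscr{H}, d_{\mathscr{H}})$ the laws of $\omega_p^{L(p)^{-1}}$ are automatically tight; extract a subsequential limit and apply Theorem \ref{t4} to the two extreme configurations together with the fact that the inclusion relation is closed in the quad-crossing topology: every subsequential limit $\omega^{*}$ is sandwiched as $\omega^{0}(-1 - \epsilon) \subseteq \omega^{*} \subseteq \omega^{0}(-1 + \epsilon)$ in a suitable coupling. Finally, letting $\epsilon \downarrow 0$ and invoking continuity of $\lambda \mapsto \omega^{0}(\lambda)$ in law at $\lambda = -1$ pins down $\omega^{*} = \omega^{0}(-1)$, yielding convergence in law of the whole family.

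The main obstacle is precisely this continuity of $\lambda \mapsto \omega^{0}(\lambda)$ in the weak* topology on $(\mathscr{H}, \mathscr{T})$, which is not part of Theorem \ref{t4} as stated. It is established in \cite{GPS18} via a quantitative noise-sensitivity / pivotal-measure comparison between the near-critical configurations at $\lambda$ and $\lambda'$, uniform in the mesh $\eta$; the essential inputs are stability of arm probabilities (\ref{e7}) and quasi-multiplicativity (\ref{e71}) at criticality, both already recorded in Section \ref{classic}. Granting that continuity, the reparametrization-and-sandwich argument sketched above yields Theorem \ref{t6}.
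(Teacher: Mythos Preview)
The paper does not prove Theorem~\ref{t6}; it is simply quoted as Corollary~1.7 of \cite{GPS18} and used thereafter as a black box. There is no proof in this paper against which to compare your proposal.

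That said, your outline is a sound reconstruction of how the result is obtained from Theorem~\ref{t4} together with further input from \cite{GPS18}. The reparametrization $\omega_p^{L(p)^{-1}}=\omega_{p_{\lambda(p)}(\eta(p))}^{\eta(p)}$ and the computation $\lambda(p)\to -1$ from the definition of $L(p)$ are correct; the only minor wrinkle in your lower bound $|\lambda(p)|\ge 1$ is that one should note $R\mapsto\mathbf{P}_{p_c}[\mathcal{A}_4(1,R)]$ is piecewise constant on the lattice (hence right-continuous), so the defining infimum is attained. You also correctly isolate the crux: Theorem~\ref{t4} only gives convergence at each \emph{fixed} $\lambda$, and to pass to the moving target $\lambda(p)\to -1$ one needs continuity of $\lambda\mapsto\omega^0(\lambda)$ in law. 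In \cite{GPS18} this is not an afterthought but is built into the construction: the near-critical ensemble $(\omega^0(\lambda))_{\lambda\in\mathbb{R}}$ is obtained as a c\`adl\`ag process in $\lambda$, and the joint convergence of the discrete processes $(\omega_{p_\lambda(\eta)}^\eta)_\lambda$ is established, from which Corollary~1.7 follows directly without the separate sandwich-plus-tightness step you propose. Your monotone-coupling route is a legitimate alternative, but it still rests on the same continuity input from \cite{GPS18}, so it does not bypass that paper's machinery.
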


Theorems \ref{t4} and \ref{t6} state that the near-critical
percolation measures converge.  Moreover, the convergence of
quad-crossing probabilities also holds:
\begin{lemma}\label{l15}
Fix $\lambda\in\mathbb{R}$.  Let $D\subset \mathbb{C}$ be a bounded
domain.  Let $Q\in\mathcal {Q}_{D}$.  We have
\begin{equation}\label{e80}
\lim_{\eta\rightarrow
0}\mathbf{P}_{p_{\lambda}(\eta)}^{\eta}[\boxminus_{Q}]=\mathbf{P}^{\infty,\lambda}[\boxminus_{Q}].\qquad\mbox{((9.2)
in \cite{GPS18})}
\end{equation}
Moreover,
\begin{equation}\label{e81}
\lim_{p\uparrow
p_c}\mathbf{P}_p^{L(p)^{-1}}[\boxminus_{Q}]=\mathbf{P}^{\infty,-1}[\boxminus_{Q}].
\end{equation}
\end{lemma}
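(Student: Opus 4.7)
The plan is to deduce (\ref{e81}) from Theorem \ref{t6} by a portmanteau-type argument, running in parallel with the proof of (\ref{e80}) from Theorem \ref{t4} given in \cite{GPS18}. The key observation that makes this easy is that the limit measure appearing in (\ref{e80}) with $\lambda=-1$ coincides with the limit measure in (\ref{e81}), namely $\mathbf{P}^{0,-1}$; consequently, any continuity-of-crossing-probability statement for $\mathbf{P}^{0,-1}$ that was established in \cite{GPS18} is already available, and only the pre-limit measures need to be replaced.

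For the upper bound, recall that by construction of the topology $\mathscr{T}$ the set $\boxminus_Q^c$ is open, so $\boxminus_Q$ is closed. Theorem \ref{t6} asserts weak convergence $\mathbf{P}_p^{L(p)^{-1}}\Rightarrow\mathbf{P}^{0,-1}$ on the Polish space $(\mathscr{H},\mathscr{T})$, and the portmanteau theorem immediately gives $\limsup_{p\uparrow p_c}\mathbf{P}_p^{L(p)^{-1}}[\boxminus_Q]\leq\mathbf{P}^{0,-1}[\boxminus_Q]$. For the matching lower bound, fix $\epsilon>0$ and, using the continuity of $Q\mapsto \mathbf{P}^{0,-1}[\boxminus_Q]$ established in \cite{GPS18}, choose a quad $Q^+>Q$ with $\mathbf{P}^{0,-1}[\boxminus_{Q^+}]\geq\mathbf{P}^{0,-1}[\boxminus_Q]-\epsilon$. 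By the definition of the strict order, there is an open neighborhood $\mathcal{U}$ of $Q^+$ in $\mathcal{Q}_D$ such that $Q\leq Q'$ for every $Q'\in\mathcal{U}$; hence $\boxdot_{\mathcal{U}}^c=\cup_{Q'\in\mathcal{U}}\boxminus_{Q'}$ is an open subset of $\mathscr{T}$ contained in $\boxminus_Q$, so portmanteau for open sets yields
\begin{equation*}
\liminf_{p\uparrow p_c}\mathbf{P}_p^{L(p)^{-1}}[\boxminus_Q]\geq\mathbf{P}^{0,-1}[\boxdot_{\mathcal{U}}^c]\geq\mathbf{P}^{0,-1}[\boxminus_{Q^+}]\geq\mathbf{P}^{0,-1}[\boxminus_Q]-\epsilon.
\end{equation*}
Sending $\epsilon\downarrow 0$ closes the gap.

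The main obstacle here is the continuity claim used in the previous step: for every $\epsilon>0$ one must be able to find $Q^+>Q$ with $\mathbf{P}^{0,-1}[\boxminus_{Q^+}]\geq\mathbf{P}^{0,-1}[\boxminus_Q]-\epsilon$, or equivalently that $\mathbf{P}^{0,-1}[\partial\boxminus_Q]=0$ in $\mathscr{T}$. This is precisely the content that drives the proof of (\ref{e80}) in \cite{GPS18}; it rests on stability of arm-event probabilities near criticality (cf.\ (\ref{e7})), the RSW bounds, and FKG, applied so as to control ``pivotal-type" boundary configurations at scale $L(p)^{-1}$. Because the target limit measure $\mathbf{P}^{0,-1}$ is shared between (\ref{e80}) with $\lambda=-1$ and (\ref{e81}), we may simply invoke this already-established continuity rather than reproving it, which is why the extension from (\ref{e80}) to (\ref{e81}) reduces to the portmanteau bookkeeping above.
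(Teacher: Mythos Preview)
Your proof is correct and follows essentially the same approach the paper indicates: the paper simply notes that the proof of (\ref{e80}) (via Theorem~\ref{t4} and the proof of Corollary~5.2 in \cite{SS11}) carries over to (\ref{e81}) once Theorem~\ref{t4} is replaced by Theorem~\ref{t6}, and your portmanteau argument is precisely that proof spelled out. Your observation that the limit measure $\mathbf{P}^{0,-1}$ is shared between (\ref{e80}) with $\lambda=-1$ and (\ref{e81}) is a convenient shortcut, since it lets you invoke the boundary-continuity $\mathbf{P}^{0,-1}[\partial\boxminus_Q]=0$ as already established rather than re-verify the RSW input for the new family $\mathbf{P}_p^{L(p)^{-1}}$.
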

The proof of (\ref{e80}) in \cite{GPS18} uses Theorem \ref{t4}
and the proof of Corollary 5.2 in \cite{SS11}, relying on the RSW
estimates for near-critical percolation; the same proof also works for
(\ref{e81}) by using Theorem \ref{t6} and the proof of Corollary 5.2
in \cite{SS11}.

\subsection{Proof of Corollary \ref{cor1}}\label{coro}
In this subsection, we describe how to derive Corollary \ref{cor1} from Theorem \ref{t2}.

For $R>0$, denote by $\boxminus_R$ the left-right crossing event in the quad $[0,R]^2$.  As in the discrete model,
we define a notion of correlation length for $\omega^{\infty}(\lambda)$: Given $\epsilon\in(0,1/2)$ and $\lambda<0$, define
\begin{equation*}
L_{\epsilon}^{\infty}(\lambda):=\inf\{R>0:
\mathbf{P}^{\infty,\lambda}[\boxminus_R]\leq\epsilon\}.
\end{equation*}

For $\lambda\in\mathbb{R}$ and $R>0$,  by using (\ref{e80}) we
can define
\begin{equation*}
f(\lambda,R):=\lim_{\eta\rightarrow
0}\mathbf{P}_{p_{\lambda}(\eta)}^{\eta}[\boxminus_R]=\mathbf{P}^{\infty,\lambda}[\boxminus_R].
\end{equation*}
By (25) in \cite{AS17} and the argument below it, we
know that for fixed $R>0$, $f(\lambda,R)$ is absolutely continuous and strictly
increasing in $\lambda$, and it satisfies $f(\lambda,R)\in(0,1)$ and
\begin{equation*}
\lim_{\lambda\rightarrow-\infty}f(\lambda,R)=0\quad\mbox{and}\quad\lim_{\lambda\rightarrow\infty}f(\lambda,R)=1.
\end{equation*}
Furthermore, it is clear that $f(0,R)=1/2$ for all $R>0$, by the
well-known duality $f(-\lambda,R)=1-f(\lambda,R)$.  By Corollary
10.5 of \cite{GPS18}, for any scaling parameter $\rho>0$,
$\rho\cdot\omega^{\infty}(\lambda)$ has the same law as
$\omega^{\infty}(\rho^{-3/4}\lambda)$.  Therefore,
\begin{equation*}
f(\rho\lambda,R)=f(\lambda,\rho^{4/3}R).
\end{equation*}
(See (26) in \cite{AS17} for general quad.)  The above argument implies that $f(\lambda,R)$ is absolutely
continuous and strictly decreasing in $R$, and for any fixed
$\lambda<0$,
\begin{equation*}
\lim_{R\rightarrow
\infty}f(\lambda,R)=0\quad\mbox{and}\quad\lim_{R\rightarrow
0}f(\lambda,R)=1/2.
\end{equation*}
This implies that for any fixed $\lambda<0$ and $\epsilon\in(0,1/2)$,
\begin{equation}\label{e82}
L_{\epsilon}^{\infty}(\lambda)=\mbox{the unique $R$ such that
$f(\lambda,R)=\epsilon$};
\end{equation}
furthermore, $f(\lambda,R)>\epsilon$ when
$R<L_{\epsilon}^{\infty}(\lambda)$ and $f(\lambda,R)<\epsilon$ when
$R>L_{\epsilon}^{\infty}(\lambda)$.

It is well known that $L(p)\asymp L_{\epsilon}(p)$ as $p\uparrow
p_c$ (see Section \ref{correlation}).  The following is a refinement
of this result.
\begin{lemma}\label{l11}
For any fixed $\epsilon\in(0,1/2)$, we have
\begin{equation*}
\lim_{p\uparrow p_c}\frac{L_{\epsilon}(p)}{L(p)}=
L_{\epsilon}^{\infty}(-1).
\end{equation*}
\end{lemma}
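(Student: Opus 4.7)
The plan is to pin down $L_\epsilon(p)/L(p)$ by applying the convergence (\ref{e81}) of rescaled box-crossing probabilities at side lengths straddling a specific value $R^* := L_\epsilon^0(-1)$. From the properties of $f(-1,\cdot)$ recorded in the excerpt — absolute continuity, strict monotonicity in $R$, and the limits $1/2$ at $R = 0$ and $0$ at $R = \infty$ — one has $R^* \in (0, \infty)$ with $f(-1, R^*) = \epsilon$, and $f(-1, R) > \epsilon$ for $R < R^*$, $f(-1, R) < \epsilon$ for $R > R^*$.

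First, I would establish the scaling identity
\begin{equation*}
\mathbf{P}_p^{L(p)^{-1}}[\boxminus_\rho] = \mathbf{P}_p[\boxminus_{\rho L(p)}] \qquad (\rho > 0),
\end{equation*}
obtained by undoing the $L(p)^{-1}$-rescaling of the lattice on both sides of the left-right crossing event. Combined with (\ref{e81}), this gives $\lim_{p \uparrow p_c} \mathbf{P}_p[\boxminus_{\rho L(p)}] = f(-1, \rho)$ for each fixed $\rho > 0$.

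Next, fix $\delta \in (0, R^*)$. For the upper bound, I apply the above convergence at $\rho = R^* + \delta$: since $f(-1, R^* + \delta) < \epsilon$, for $p$ sufficiently close to $p_c$ one has $\mathbf{P}_p[\boxminus_{(R^* + \delta) L(p)}] < \epsilon$, and the defining infimum yields $L_\epsilon(p) \leq (R^* + \delta) L(p)$. For the lower bound, I apply the convergence at $\rho = R^* - \delta$: since $f(-1, R^* - \delta) > \epsilon$, for $p$ close to $p_c$ one has $\mathbf{P}_p[\boxminus_{(R^* - \delta) L(p)}] > \epsilon$; invoking the classical monotonicity of $R \mapsto \mathbf{P}_p[\boxminus_R]$ (non-increasing for $p \leq p_c$) extends this to $\mathbf{P}_p[\boxminus_R] > \epsilon$ for every $R \leq (R^* - \delta) L(p)$, hence $L_\epsilon(p) > (R^* - \delta) L(p)$. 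Letting $\delta \downarrow 0$ then closes the sandwich and yields $L_\epsilon(p)/L(p) \to R^* = L_\epsilon^0(-1)$.

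The hard part will be justifying the monotonicity of the square-crossing probability in the side length used in the lower bound; this is classical in planar percolation, but if a clean appeal is unavailable, I would recast the lower bound as a subsequential-limit contradiction: assuming $\alpha := \liminf_{p \uparrow p_c} L_\epsilon(p)/L(p) < R^*$, I would extract via the infimum a sequence $\rho_k \to \alpha$ with $\mathbf{P}_{p_k}^{L(p_k)^{-1}}[\boxminus_{\rho_k}] \leq \epsilon$, and derive a contradiction with $f(-1, \rho_+) > \epsilon$ for any fixed $\rho_+ \in (\alpha, R^*)$ via a sandwich exploiting (\ref{e81}) and the continuity and strict decrease of $f(-1, \cdot)$.
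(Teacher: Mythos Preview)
Your proposal is correct and takes essentially the same approach as the paper: both use the convergence (\ref{e81}) together with the strict monotonicity and continuity of $f(-1,\cdot)$ recorded in (\ref{e82}) to sandwich $L_\epsilon(p)/L(p)$ between $L_\epsilon^0(-1)\pm\delta$. The paper's proof is terser and, like your main line, implicitly uses the monotonicity point you flag for the lower bound; your explicit acknowledgment of this and the subsequential workaround you sketch are sound.
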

\begin{proof}
By (\ref{e81}), for any $R>0$, we have
\begin{equation*}
\lim_{p\uparrow p_c}\mathbf{P}_p^{L(p)^{-1}}[\boxminus_R]=f(-1,R).
\end{equation*}
Then using (\ref{e82}) and the statement below it, we have
\begin{equation*}
\lim_{p\uparrow
p_c}\mathbf{P}_p^{L(p)^{-1}}[\boxminus_{L_{\epsilon}^{\infty}(-1)}]=f(-1,L_{\epsilon}^{\infty}(-1))=\epsilon,
\end{equation*}
and furthermore,
\begin{align*}
&\lim_{p\uparrow p_c}\mathbf{P}_p^{L(p)^{-1}}[\boxminus_R]=f(-1,R)>\epsilon\mbox{ when }R<L_{\epsilon}^{\infty}(-1),\\
&\lim_{p\uparrow
p_c}\mathbf{P}_p^{L(p)^{-1}}[\boxminus_R]=f(-1,R)<\epsilon\mbox{
when }R>L_{\epsilon}^{\infty}(-1).
\end{align*}
By the definition of $L_{\epsilon}(p)$ we have
\begin{equation*}
\frac{L_{\epsilon}(p)}{L(p)}=\inf\left\{R\geq L(p)^{-1}: \mathbf{P}_p^{L(p)^{-1}}[\boxminus_R]\leq\epsilon\right\}.
\end{equation*}
Since $L(p)\rightarrow\infty$ as $p\uparrow
p_c$, we conclude by the above argument that $\lim_{p\uparrow p_c}L_{\epsilon}(p)/L(p)=L_{\epsilon}^{\infty}(-1)$.
\end{proof}
\begin{proof}[Proof of Corollary \ref{cor1}]
Theorem \ref{t2} combined with Lemma \ref{l11} yields Corollary
\ref{cor1}.
\end{proof}

\subsection{Basic properties of Bernoulli FPP}
We say that a finite set $D\subset V(\mathbb{T})$ is simply
connected if the union of the hexagons $H_v,v\in D$, is simply
connected.  For a simply connected set $D$ of sites, we denote by
$\partial^- D$ its inner site boundary, that is, the set of sites of
$D$ that are adjacent to some site of $V(\mathbb{T})\backslash D$.
We call a simply connected subset $D$ of $\mathbb{T}$ a
discrete Jordan set if $\partial^- D$ is a circuit.  A
\textbf{discrete quad} is a discrete Jordan set $D$ together with
four distinct sites $v_1,v_2,v_3,v_4$ of $\partial^- D$, appearing
in this order as $\partial^- D$ is traversed counterclockwise. Given
a discrete quad $(D;v_1,v_2,v_3,v_4)$, we define the \textbf{arc}
$(v_kv_{k+1})$ to be the path from $v_k$ to $v_{k+1}$ (with
$v_5=v_1$) in $\partial^- D$ as $\partial^- D$ is traversed
counterclockwise.

For two disjoint finite sets $S,S'\subset V(\mathbb{T})$, we
say that a path $\gamma$ in $V(\mathbb{T})\backslash(S\cup S')$ separates $S$ from $S'$ if
any path from $S$ to $S'$ must intersect $\gamma$.  For $V_0\subset V(\mathbb{T})$ and $S,S'\subset V_0$, define
\begin{equation*}
T(S,S')(V_0):=\inf\{T(\gamma):\gamma\mbox{ is a path from a
site in $S$ to a site in $S'$ and
$\gamma\subset V_0$}\}.
\end{equation*}

The following topological or combinatorial properties of passage times are very useful in studying Bernoulli FPP on $\mathbb{T}$.
\begin{proposition}\label{p4}
Consider Bernoulli FPP on $\mathbb{T}$ with parameter $p\in(0,1)$.
The following statements hold.
\begin{enumerate}[(i)]
\item Let $(D;v_1,v_2,v_3,v_4)$ be a discrete quad.  We have
\begin{align*}
T((v_1v_2),(v_3v_4))(D)=\mbox{the maximal number of disjoint yellow paths from $(v_2v_3)$ to $(v_4v_1)$ in $D$}.
\end{align*}\label{i1}
\item Let $\mathcal {C}$
and $\mathcal {C}'$ be two fixed distinct finite blue clusters.  Then almost surely
\begin{equation*}
T(\mathcal {C},\mathcal {C}')=\mbox{the maximal number of disjoint
yellow circuits separating $\mathcal {C}$ from $\mathcal {C}'$}.
\end{equation*}
This implies that, almost surely, there exist $T(\mathcal {C},\mathcal
{C}')$ disjoint yellow circuits separating $\mathcal {C}$ from
$\mathcal {C}'$, such that any geodesic from $\mathcal {C}$ to
$\mathcal {C}'$ must intersect each of these circuits in exactly one
site. \label{i2}
\end{enumerate}
\end{proposition}
\begin{proof}
(\ref{i1}) is a quad version of property (i) for the annulus
passage times in Proposition 2 of \cite{Yao19}, and its proof is essentially the same as the proof of
that property (see, e.g., the first part of the proof of Proposition 2.4 in \cite{Yao18}), which is omitted here.

(\ref{i2}) is a slightly general version of (i) and (ii) in
Proposition 2 of \cite{Yao19}; their proofs are the same when $\mathcal {C}$ surrounds
$\mathcal {C}'$ or vice versa.  We now prove the
case where $\mathcal {C}$ does not surround $\mathcal {C}'$ and vice
versa.  Let $N$ denote the maximal number of disjoint yellow
circuits separating $\mathcal {C}$ from $\mathcal {C}'$.  The inequality
$T(\mathcal {C},\mathcal {C}')\geq N$ is trivial, since if there are $N$
disjoint yellow circuits separating $\mathcal {C}$ from $\mathcal
{C}'$ then any path connecting $\mathcal {C}$ and $\mathcal {C}'$
must intersect each of these circuits.

It remains to show that $T(\mathcal {C},\mathcal {C}')\leq N$ almost
surely.  Suppose that there exist almost surely $N$ disjoint yellow circuits
separating $\mathcal {C}$ from $\mathcal {C}'$, with $N_1$ of them
surrounding $\mathcal {C}$ and $N_2=N-N_1$ of them surrounding $\mathcal
{C}'$.  Write $\mathcal {C}_0:=\mathcal {C}$ and $\mathcal
{C}_0':=\mathcal {C}'$.  For $1\leq k\leq N_1$, let $\mathcal {C}_k$
denote the innermost yellow circuit surrounding $\mathcal
{C}_{k-1}$; for $1\leq k\leq N_2$, let $\mathcal {C}_k'$ denote the
innermost yellow circuit surrounding $\mathcal {C}_{k-1}'$.  It is
clear that $\mathcal {C}_k$, $1\leq k\leq N_1$ and $\mathcal
{C}_k'$, $1\leq k\leq N_2$ are $N$ disjoint yellow circuits
separating $\mathcal {C}$ from $\mathcal {C}'$.  We claim that almost surely either there is
a blue path $\gamma$ such that its starting site has a
neighbor $v_{N_1}\in\mathcal {C}_{N_1}$ and its ending site has a
neighbor $v'_{N_2}\in\mathcal {C}_{N_2}'$, or there is a $v_{N_1}\in\mathcal
{C}_{N_1}$ and a $v'_{N_2}\in\mathcal {C}_{N_2}'$ such that they are
neighbors; in the latter case we let $\gamma=\emptyset$.  Suppose the claim does not hold.
Then it is easy to see that there is a.s. a yellow circuit separating
$\mathcal {C}_{N_1}$ from $\mathcal {C}_{N_2}'$, and thus there are
a.s. $N+1$ disjoint yellow circuits separating $\mathcal {C}$ from
$\mathcal {C}'$, a contradiction.  Next, (since $\mathcal {C}_{N_1}$
is the innermost circuit surrounding $\mathcal {C}_{N_1-1}$) we take
a path $\gamma_{N_1}$ from $v_{N_1}$ to a site which has a neighbor
$v_{N_1-1}\in\mathcal {C}_{N_1-1}$, with all sites of
$\gamma_{N_1}\backslash\{v_{N_1}\}$ being blue, and then take a path
$\gamma_{N_1-1}$ from $v_{N_1-1}$ to a site which has a neighbor
$v_{N_1-2}\in\mathcal {C}_{N_1-2}$, with all sites of
$\gamma_{N_1-1}\backslash\{v_{N_1-2}\}$ being blue, and so on.  The
process stops after $N_1$ steps, and $\gamma_{1}$ is a path from
$v_1\in\mathcal {C}_1$ to a site in $\mathcal {C}$, with all sites
of $\gamma_1\backslash\{v_1\}$ being blue.  Similarly we take paths
$\gamma_{N_2}',\gamma_{N_2-1}',\ldots,\gamma_1'$.  Then we
concatenate the paths
$\gamma_1,\ldots,\gamma_{N_1},\gamma,\gamma_{N_2}',\ldots,\gamma_1'$
to obtain a path $\Gamma$ from $\mathcal {C}$ to $\mathcal {C}'$,
such that $T(\Gamma)=N$, which implies $T(\mathcal {C},\mathcal
{C}')\leq N$ almost surely.
\end{proof}

In the rest of this paper, we mainly work on $\mathbf{P}_p^{L(p)^{-1}}$.
For notational convenience, we write ${\eta=\eta(p):=L(p)^{-1}}$.

Suppose that $w,h>0$.  For the rectangle
$[0,w]\times[0,h]$, define the \textbf{line-to-line passage time}
$l_{w,h}^p$ by
\begin{equation*}
l_{w,h}^p=l_{w,h}^p(\omega_p^{\eta}):=\inf\{T(\gamma):\mbox{$\gamma$ is a left-right crossing
of $[0,w]\times [0,h]$ in $\eta\mathbb{T}$}\}.
\end{equation*}
More generally, the line-to-line passage time $l_{w,h}^{p,\theta}(z)$ corresponding to the
rectangle $z+e^{i\theta}([0,w]\times [0,h])$, with $z\in
\mathbb{C}$ and $\theta\in[0,2\pi]$, is defined
similarly as above.  Note that $l_{w,h}^{p,0}(0)=l_{w,h}^p$.

\begin{lemma}\label{l24}
There exist constants $C_1,C_2>0$ and $K\geq 2$, such that for all
$p\in(p(10),p_c)$, $2\eta\leq h\leq 1$,
${w\geq h}$, ${x\geq Kw/h}$, ${z\in\mathbb{C}}$ and
$\theta\in[0,2\pi]$,
\begin{equation*}
\mathbf{P}_p^{\eta}\left[l_{w,h}^{p,\theta}(z)\geq x\right]\leq
C_1\exp(-C_2x).
\end{equation*}
\end{lemma}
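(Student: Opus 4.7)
The plan is a block renormalization: I would bound $l_{w,h}^p$ from above by a sum of independent local passage times, each with geometric tails, and then apply a Chernoff bound. By (near) translational and rotational invariance of the model, I would first reduce to $z=0$, $\theta=0$, so the rectangle becomes $R=[0,w]\times[0,h]$ on $\eta(p)\mathbb{T}$; the lattice anisotropy costs only a bounded factor absorbed into the constants.

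The structural input is Proposition \ref{p4}(\ref{i1}) (the quad version of min-cut/max-flow duality): for any sub-rectangle $Q\subset R$, the left-right passage time of $Q$ equals the maximum number of site-disjoint yellow top-to-bottom crossings of $Q$. Applied to the unit sub-square $Q_i:=[ih,(i+1)h]\times[0,h]$, with $X_i$ its LR passage time, the event $\{X_i\geq k\}$ becomes the disjoint occurrence of $k$ copies of the increasing event ``yellow TB crossing of $Q_i$''. The BK inequality then yields
\[
\mathbf{P}_p^{\eta(p)}[X_i\geq k]\leq \mathbf{P}_p^{\eta(p)}[\text{yellow TB of }Q_i]^k.
\]
Since $h\leq 1$ means $Q_i$ has side length $hL(p)\leq L(p)$ on the original lattice, the near-critical RSW bound of Section \ref{classic}(i) provides a universal $\delta>0$ with $\mathbf{P}_p^{\eta(p)}[\text{blue LR of }Q_i]\geq \delta$ uniformly in $p\in(p(1),p_c]$; hence $\mathbf{P}_p^{\eta(p)}[\text{yellow TB of }Q_i]\leq 1-\delta$ and $\mathbf{P}_p^{\eta(p)}[X_i\geq k]\leq (1-\delta)^k$.

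Next I would tile $[0,w]$ by $m:=\lceil w/h\rceil$ such sub-squares and insert thin vertical ``connector'' strips $V_i$ of width and height comparable to $h$ straddling each interface $x=(i+1)h$, whose TB passage times $Y_i$ satisfy the same tail bound by the analogous argument. Concatenating the LR geodesics in each $Q_i$ with the TB geodesics in each $V_i$ — planarity forces any LR crossing and any TB crossing of a common rectangle to share a site — produces an LR crossing of $R$ whose yellow count is at most $\sum_i X_i+\sum_i Y_i$, up to bounded boundary corrections. By slightly shrinking the $Q_i$'s and $V_i$'s so that they are pairwise disjoint, the local passage times $X_i$, $Y_i$ become jointly independent, each with mean at most $1/\delta$ and sub-geometric tails. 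A standard Chernoff bound then gives
\[
\mathbf{P}_p^{\eta(p)}\Bigl[\sum_i X_i+\sum_i Y_i\geq x\Bigr]\leq C_1 e^{-C_2 x}
\]
whenever $x\geq Km$ for $K=K(\delta)$ large enough; since $w\geq h$ implies $m\leq 2w/h$, this closes the estimate with the stated form.

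The main obstacle is the concatenation/gluing step: the blocks and connectors must be set up so that each LR geodesic of $Q_i$ and TB geodesic of $V_i$ are forced by planarity to meet (favouring overlap), while simultaneously the local passage times must remain jointly independent (favouring disjointness). The plan is to resolve this tension by an overlap-then-shrink construction, in which the ``effective'' disjoint sub-rectangles used for independence are contained in slightly larger overlapping sub-rectangles used for the geometric gluing. This costs only constant factors, absorbed into the constants $K$, $C_1$, $C_2$ of the statement.
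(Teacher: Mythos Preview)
Your block--Chernoff approach is valid but differs from the paper's route. The paper applies Proposition~\ref{p4}(\ref{i1}) \emph{once} to (essentially) the full rectangle: after reducing to $h=1$, it bounds $l_{w,1}^p-4$ by the maximal number $\widetilde{l_{w,1}^p}$ of disjoint yellow top--bottom crossings of $[0,w]\times[1/4,3/4]$, observes that each such crossing must traverse one of $O(w)$ fixed unit-scale sub-rectangles, and then controls $\mathbf{P}[\widetilde{l_{w,1}^p}\geq x]$ by a single BK-plus-multinomial count $\binom{\lfloor x\rfloor+O(w)}{O(w)}\delta^{\lfloor x\rfloor}$ (the argument of Step~3 in the proof of Theorem~5 of \cite{BN11}), which for $x\geq Kw$ decays exponentially by Stirling. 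This sidesteps any concatenation of paths. Your decomposition is more modular and perhaps more portable to other settings, but the paper's argument is shorter precisely because no gluing is needed.

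One caveat on your resolution of the ``main obstacle'': the overlap-then-shrink construction cannot deliver \emph{joint independence} as written. If the effective sub-rectangles $Q_i,V_i$ are made pairwise disjoint, then their geodesics live in disjoint regions and planarity no longer forces them to meet---the gluing genuinely requires the geodesic-carrying rectangles themselves to overlap (so that, e.g., $V_i\subset Q_i\cap Q_{i+1}$ spans the full height, whence the LR geodesic of $Q_i$ must cross $V_i$ and hit the TB geodesic there). The correct fix is to accept that the resulting $X_i$'s are $1$-dependent, bound $\sum X_i+\sum Y_i$ by twice the larger of the two sums, and split each into even/odd index classes before applying Chernoff. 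This is routine and costs only constants, so your plan goes through; but the independence claim should be replaced by finite-range dependence plus splitting.
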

\begin{proof}
For simplicity, we shall show the lemma in the case $\theta=0,z=0$
and $h=1$; the proof extends immediately to the general case.

Suppose $w\geq 1$ and $p\in(p(10),p_c)$.  Let
$D_w^{\eta}$ be the largest discrete Jordan set of
$\eta\mathbb{T}$ in $[0,w]\times[0,1]$.  Let $v_1,v_2,v_3,v_4\in
\partial^- D_w^{\eta}$ be four sites closest to the four points
$(0,1),(0,0),(w,0),(w,1)$, respectively.  Then we get a discrete
quad $(D_w^{\eta};v_1,v_2,v_3,v_4)$.  It is easy to see that
\begin{equation*}
l_{w,1}^p\leq T((v_1v_2),(v_3v_4))(D_w^{\eta})+4.
\end{equation*}
Using (\ref{i1}) of Proposition \ref{p4} and the above inequality,
we have
\begin{align*}
l_{w,1}^p-4&\leq\mbox{the maximal number of disjoint yellow paths
from $(v_2v_3)$ to $(v_4v_1)$ in $D_w^{\eta}$}\\
&\leq\mbox{the maximal number of disjoint yellow top-bottom
crossings of $[0,w]\times[1/4,3/4]$}\\
&:=\widetilde{l_{w,1}^p}.
\end{align*}
The argument in the following is analogous to Step 3 of the proof of
Theorem 5 in \cite{BN11}.  Observe that any yellow top-bottom
crossing of $[0,w]\times[1/4,3/4]$ must either cross a rectangle in
$\{[j/2,j/2+1]\times[1/4,3/4]: j=0,1,\ldots,\lfloor 2w\rfloor-1\}$
from top to bottom, or a square in $\{[j/2,j/2+1/2]\times[1/4,3/4]:
j=0,1,\ldots,\lfloor 2w\rfloor-1\}$ from left to right.  Therefore,
if $\widetilde{l_{w,1}^p}\geq x$, then there are $\lfloor x\rfloor$
disjoint yellow top-bottom crossings of $[0,w]\times[1/4,3/4]$, each
one crossing a rectangle from the two families of rectangles above.
By RSW, there exists a universal constant $\delta\in(0,1)$, such
that for any rectangle from the two families of rectangles above,
the probability of the event that this rectangle has a yellow
top-bottom or left-right crossing is bounded above by $\delta$. This
combined with the BK inequality implies that
\begin{equation*}
\mathbf{P}_p^{\eta}\left[\widetilde{l_{w,1}^p}\geq
x\right]\leq\sum_{n_1+n_2+\cdots+n_{2\lfloor 2w\rfloor}=\lfloor
x\rfloor}\delta^{\lfloor x\rfloor}=\binom{\lfloor x\rfloor+2\lfloor 2w\rfloor}{2\lfloor
2w\rfloor}\delta^{\lfloor
x\rfloor},
\end{equation*}
where $n_1,n_2,\ldots,n_{2\lfloor 2w\rfloor}\in\mathbb{Z}_+$.  Then
using Stirling's formula we obtain that, there exist constants
$C_3,C_4>0$ and ${K_1\geq 2}$, such that for all $w\geq 1,x\geq K_1w$ and
$p\in(p(10),p_c)$,
\begin{equation*}
\mathbf{P}_p^{\eta}\left[\widetilde{l_{w,1}^p}\geq x\right]\leq
C_3\exp(-C_4x),
\end{equation*}
which implies the desired result for $l_{w,1}^p$ since $l_{w,1}^p\leq
\widetilde{l_{w,1}^p}+4$.
\end{proof}

We will also consider annulus-crossing times.  A path
$(v_0,v_1,\ldots,v_k)$ of $\eta\mathbb{T}$ is called a crossing of
$A(z;r_1,r_2)$ if $v_1,\ldots,v_{k-1}\in
A(z;r_1,r_2)$ and the line segments $\overline{v_0v_1}$ and
$\overline{v_{k-1}v_k}$ intersect the two boundary pieces of
$A(z;r_1,r_2)$, respectively.  Let
\begin{equation*}
X^p(z;r_1,r_2):=\inf\{T(\gamma):\gamma\mbox{ is a crossing of $A(z;r_1,r_2)$ in $\eta\mathbb{T}$}\}.
\end{equation*}
The next lemma is an annulus analog of Lemma \ref{l24}, and is a near-critical analog of Corollary 2.3 in
\cite{Yao14} for the critical case.
\begin{lemma}\label{l10}
There exist constants $K_1,K_2>0$ and $K\geq 2$, such that for all
$p\in(p(10),p_c)$, ${\eta\leq r_1\leq
r_2/2\leq 2}$, ${x\geq K\log_2(r_2/r_1)}$ and $z\in\mathbb{C}$,
\begin{equation*}
\mathbf{P}_p^{\eta}[X^p(z;r_1,r_2)\geq x]\leq
K_1\exp(-K_2x).
\end{equation*}
\end{lemma}
\begin{proof}
The proof is very similar to the proof of Lemma
\ref{l24}, so we only give a sketch.  By using (i) of
Proposition 2 in \cite{Yao19}, proving the lemma boils down to estimate the
maximal number of disjoint yellow circuits surrounding $z$ in $A(z;r_1,r_2)$.  Then
one can use the argument in Step 3 of the proof of
Theorem 5 in \cite{BN11} to obtain it, based on
BK inequality and near-critical RSW.
\end{proof}

The square lattice has site set $\mathbb{Z}^2$ and bond set
$E(\mathbb{Z}^2)$ obtained by connecting all pairs
$u,v\in\mathbb{Z}^2$ for which ${|u-v|=1}$.  In a standard abuse
of notation, we write $\mathbb{Z}^2$ to denote this graph.  Let
$\mathbf{P}_{\mathbb{Z}^2,p}^{site}$ (resp.
$\mathbf{P}_{\mathbb{Z}^2,p}^{bond}$) denote the Bernoulli site
(resp. bond) percolation measure on $\mathbb{Z}^2$ with parameter
$p$, defined similarly as the measure $\mathbf{P}_p$ on
$\mathbb{T}$.  Here we adapt the usual setting for Bernoulli FPP: Let
each site (resp. bond) of $\mathbb{Z}^2$ take the value 0 (open) with probability $p$,
and take the value 1 (closed) with probability $1-p$.

In the following theorem, we will compare locally dependent fields with
Bernoulli percolation measures.  A family $Y=\{Y_v: v\in
\mathbb{Z}^2\}$ of random variables is called $k$-dependent if any
two sub-families $\{Y_v: v\in A\}$ and $\{Y_v: v\in A'\}$ are
independent whenever the graph distance between $v$ and $v'$ is
larger than $k$ for all $v\in A$ and $v'\in A'$.  We denote by $Z^p=\{Z^p_v:v\in \mathbb{Z}^2\}$ an i.i.d. family of Bernoulli random variables which has the law $\mathbf{P}_{\mathbb{Z}^2,p}^{site}$.
\begin{theorem}[Theorem 7.65 of \cite{Gri99}]\label{t8}
Let $k\in\mathbb{N}$.  There exists a nondecreasing function $\pi:
[0,1]\rightarrow [0,1]$ satisfying $\pi(\delta)\rightarrow 1$ as
$\delta\rightarrow 1$ such that the following assertion holds.  If
$Y=\{Y_v:v\in \mathbb{Z}^2\}$ is a $k$-dependent family of random
variables satisfying
\begin{equation*}
\mathbf{P}[Y_v=1]\geq\delta\quad\mbox{for all $v\in\mathbb{Z}^2$},
\end{equation*}
then we have the stochastic domination:
$Y\geq_{st}Z^{1-\pi(\delta)}$.
\end{theorem}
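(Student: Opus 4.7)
The statement is the Liggett--Schonmann--Stacey stochastic domination theorem for $k$-dependent Bernoulli fields, a standard result whose proof I would follow along the lines presented in \cite{Gri99}. The plan is to construct $Y$ site by site and dominate it from below by an independent Bernoulli product measure via a Strassen-type coupling, whose key input is a uniform lower bound on the one-site conditional success probabilities.

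First, order the vertices of $\mathbb{Z}^2$ as $v_1, v_2, \ldots$ in some fixed way (e.g.\ lexicographically), set $\mathcal{F}_n := \sigma(Y_{v_1}, \ldots, Y_{v_{n-1}})$, and let
\[
q_n := \mathbf{P}[Y_{v_n} = 1 \mid \mathcal{F}_n].
\]
The $k$-dependence hypothesis implies that $q_n$ depends only on those $Y_{v_i}$ with $i<n$ and $\dist(v_i, v_n) \leq k$, of which there are at most some constant $N = N(k)$ independent of $n$.

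The central quantitative step is to establish an almost-sure lower bound $q_n \geq g(\delta)$, where $g$ depends only on $k$ and satisfies $g(\delta) \to 1$ as $\delta \uparrow 1$. For any realisation $(y_i)_{i \in I}$ of the at most $N$ relevant past neighbours, Bayes' rule gives
\[
1 - q_n \;=\; \frac{\mathbf{P}[Y_{v_n} = 0,\; Y_{v_i} = y_i,\, i \in I]}{\mathbf{P}[Y_{v_i} = y_i,\, i \in I]},
\]
and the numerator is bounded by $1-\delta$. The main obstacle---the technical heart of the argument---is to lower-bound the denominator by a function of $\delta$ and $N$ alone that dominates $1-\delta$ in the limit $\delta\uparrow 1$, uniformly over \emph{every} admissible configuration $(y_i)$, including those with many $y_i = 0$ (whose joint probability is itself small). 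This requires a careful combinatorial argument that exploits $k$-dependence to recursively bound joint probabilities of local configurations, as carried out in Grimmett's proof of Theorem 7.65. Since $|I|\leq N$ is uniformly bounded, a function $g(\delta)$ of the required form can be extracted.

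With the lower bound $q_n\geq g(\delta)$ in hand, the domination follows from the classical inverse-CDF coupling: draw an i.i.d.\ sequence $U_n \sim \mathrm{Unif}[0,1]$, set $Z_{v_n} := \mathbf{1}\{U_n \leq g(\delta)\}$, and simultaneously define $Y_{v_n}$ from the same $U_n$ via its $\mathcal{F}_n$-conditional distribution. Since $q_n \geq g(\delta)$ almost surely, we obtain $Y_{v_n} \geq Z_{v_n}$ pointwise for every $n$, giving $Y \geq_{st} Z^{g(\delta)}$. An appropriate choice of $\pi$ in the theorem statement matches $g(\delta)$ with the prescribed Bernoulli parameter, completing the argument.
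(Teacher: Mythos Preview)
The paper does not prove this theorem; it is quoted verbatim as Theorem~7.65 of \cite{Gri99} and used as a black box in the renormalization arguments of Sections~\ref{upper} and~\ref{renormalization}. There is therefore no ``paper's own proof'' to compare against.

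Your outline follows the standard Liggett--Schonmann--Stacey strategy that underlies the result in \cite{Gri99}: sequential revelation of the $Y_{v_n}$, a uniform lower bound on the one-step conditional success probabilities, and a Strassen-type coupling to the product measure. You correctly identify that the nontrivial step is the uniform lower bound on $\mathbf{P}[Y_{v_n}=1\mid \mathcal{F}_n]$ over \emph{all} admissible pasts, including those of small probability, and you are right that this cannot be obtained by a crude Bayes bound alone. However, your treatment of that step is essentially a citation back to \cite{Gri99} rather than an argument; since the whole theorem is Theorem~7.65 of \cite{Gri99}, this makes the proposal somewhat circular as a self-contained proof. If the intent is merely to indicate why the result is true, the sketch is accurate; if the intent is to supply an independent proof, you would need to actually carry out the recursive/combinatorial bound on the conditional probabilities (or invoke the Liggett--Schonmann--Stacey paper directly) rather than defer to the very reference being proved.
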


The following proposition is a site version of Theorem 2.3 in
\cite{Kes03} in the case $d=2$.  (Note that Theorem 2.3 in \cite{Kes03} is a special
case of Proposition 5.8 in \cite{Kes86}.)
\begin{proposition}[Theorem 2.3 of \cite{Kes03}]\label{p1}
If $p<p_c^{site}(\mathbb{Z}^2)$,  then there are constants
$\epsilon,C_1,C_2>0$ depending on $p$, such that for all $n\in\mathbb{N}$,
\begin{equation*}
\mathbf{P}_{\mathbb{Z}^2,p}^{site}\left[
\begin{aligned}
&\mbox{there exists a path starting from 0 with at least }\\
&\mbox{$n$ sites and fewer than $\epsilon n$ closed sites}
\end{aligned}
\right]\leq C_1\exp(-C_2n).
\end{equation*}
\end{proposition}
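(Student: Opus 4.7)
The plan is to follow the standard Kesten-type argument (an adaptation of the proof of Proposition~5.8 in \cite{Kes86} from bond to site percolation). The essential input from subcriticality is the exponential decay of the open cluster size: for $p<p_c^{site}(\mathbb{Z}^2)$, Menshikov's theorem yields constants $C_0,c_0>0$ such that
\[
\mathbf{P}_{\mathbb{Z}^2,p}^{site}[\,|C(0)|\geq m\,]\leq C_0\,e^{-c_0 m}\qquad\text{for all }m\geq 1,
\]
where $C(0)$ is the open cluster of the origin. It also suffices to prove the bound for paths with \emph{exactly} $n$ sites, since the first $n$ sites of any longer path with fewer than $\epsilon n$ closed sites in total still furnish such a shorter path.

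Given a candidate self-avoiding path $\gamma=(v_0,v_1,\ldots,v_{n-1})$ with $v_0=0$ and exactly $k$ closed sites, I would remove the closed sites to decompose $\gamma$ into maximal open sub-paths $R_0,R_1,\ldots,R_K$ (with $K\leq k$) of lengths $\ell_i:=|R_i|$ satisfying $\sum_{i=0}^{K}\ell_i=n-k$. Each $R_i$ is a self-avoiding open walk contained in the open cluster of its starting vertex $u_i$, so $\{R_i\text{ is all open}\}\subset\{|C(u_i)|\geq\ell_i+1\}$.

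The core step is a union bound over the combinatorial ``skeletons'' of $\gamma$. A skeleton records the positions $1\leq j_1<\cdots<j_k\leq n-1$ of the closed sites (at most $\binom{n}{k}$ choices) together with the direction of $\gamma$ at each closed site (at most $4^k$ choices); the vertices $u_i$ and lengths $\ell_i$ are then determined. For a fixed skeleton, the event that $\gamma$ is realized sits inside $B_{\mathrm{closed}}\cap B_{\mathrm{open}}$, where $B_{\mathrm{closed}}$ asks that the $k$ prescribed sites be closed and $B_{\mathrm{open}}$ asks that disjoint open self-avoiding walks of the prescribed lengths exist from the $u_i$. These two events depend on disjoint sets of sites and are therefore independent, giving $\mathbf{P}[B_{\mathrm{closed}}]=(1-p)^k$; meanwhile the $R_i$ witness $B_{\mathrm{open}}$ as a disjoint occurrence of the increasing events $A_i:=\{\exists\text{ open self-avoiding walk of length }\ell_i\text{ from }u_i\}$, so by the BK inequality combined with the cluster-size tail,
\[
\mathbf{P}[B_{\mathrm{open}}]\leq\prod_{i=0}^{K}\mathbf{P}[A_i]\leq\prod_{i=0}^{K}C_0\,e^{-c_0\ell_i}=C_0^{K+1}\,e^{-c_0(n-k)}.
\]
Summing over the $\binom{n}{k}4^k$ skeletons and then over $k\in\{0,\ldots,\lfloor\epsilon n\rfloor\}$, and invoking the entropy bound $\binom{n}{k}\leq e^{n\,h(k/n)}$ with $h$ the binary entropy (so $h(\epsilon)\downarrow 0$ as $\epsilon\downarrow 0$), the total probability is bounded above by $\exp\bigl(n[h(\epsilon)+\epsilon\log(4(1-p)C_0 e^{c_0})-c_0(1-\epsilon)]\bigr)$ up to a polynomial factor; choosing $\epsilon$ small enough (depending on $p$) renders the bracketed exponent strictly negative, yielding $C_1 e^{-C_2 n}$.

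The main obstacle is that a naive union bound over all self-avoiding walks of length $n$ from $0$, each weighted by the probability of containing at most $\epsilon n$ closed sites, yields only $\mu_2^n\exp(-n\,D(\epsilon\|1-p))$ (with $\mu_2$ the connective constant of $\mathbb{Z}^2$ and $D$ the binary Kullback--Leibler divergence), and this fails to decay when $p\in(1/\mu_2,p_c^{site})$. The fix above---grouping paths by their skeleton (so the combinatorial cost is $\binom{n}{k}4^k$ rather than $\mu_2^n$) and then using the subcritical cluster-size tail cluster-by-cluster via BK rather than walk-by-walk---is what truly leverages subcriticality; the BK application itself is routine, since the $R_i$ provide automatic disjoint witnesses.
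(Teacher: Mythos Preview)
The paper does not prove this proposition; it is quoted from Kesten's survey \cite{Kes03} (itself a special case of Proposition~5.8 in \cite{Kes86}), so there is no in-paper argument to compare against. Your overall strategy---encode a path by the combinatorics of its closed sites rather than by the full walk, then exploit subcritical exponential decay run-by-run---is exactly Kesten's, and your diagnosis of why the naive $\mu_2^n$ union bound fails is correct.

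There is, however, a real gap in the implementation. Your skeleton records only the \emph{indices} $j_1<\cdots<j_k$ of the closed sites together with one direction per closed site; from this the run lengths $\ell_i$ are indeed determined, but the \emph{spatial} starting points $u_i$ of the open runs are not. The point $u_i$ depends on where the previous run $R_{i-1}$ actually ended, and that endpoint ranges over the open cluster of $u_{i-1}$. Consequently, for a fixed skeleton the ``$k$ prescribed sites'' in $B_{\mathrm{closed}}$ and the base points in the events $A_i$ are not deterministic, so neither the factorization $\mathbf{P}[B_{\mathrm{closed}}]=(1-p)^k$ nor the BK product $\prod_i \mathbf{P}[A_i]$ is justified as written. (A single direction per closed site is also one too few: you need both the step into and the step out of each closed site, giving $(2d)^{2k}$ rather than $4^k$.)

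The standard repair is to run the bound iteratively: sum over the possible endpoints $w$ of the first open run, apply Reimer's inequality to split off $\{0\leftrightarrow w\text{ by an open SAW of length }\ell_0\}$ from the remaining structure rooted at $w+d^{\mathrm{in}}+d^{\mathrm{out}}$, and observe that $\sum_w \mathbf{P}[0\leftrightarrow w\text{ by an open SAW of length }\ell_0]\leq \mathbf{E}\bigl[|C(0)|\,\mathbf{1}_{\{|C(0)|\geq \ell_0\}}\bigr]$. Subcritical exponential decay gives $\mathbf{E}\bigl[|C(0)|\,\mathbf{1}_{\{|C(0)|\geq \ell\}}\bigr]\leq C'e^{-c'\ell}$, and the induction then closes with $(2d)^{2k}$ direction choices and $\binom{n}{k}$ run-length compositions, after which your entropy calculation goes through verbatim. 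So the fix is not deep, but it is precisely the step that carries the argument, and it requires the first-moment bound on $|C(0)|$ rather than merely the tail probability you invoked.
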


Let $\omega_{\mathbb{Z}^2}^{bond}$ denote the bond percolation
configuration on $\mathbb{Z}^2$.  The \textbf{chemical distance}
$D(u,v)(\omega_{\mathbb{Z}^2}^{bond})$ between two sites $u$ and $v$
in $\mathbb{Z}^2$ is defined by
\begin{equation*}
D(u,v)(\omega_{\mathbb{Z}^2}^{bond}):=\inf\{\mbox{the
number of bonds of $\gamma$}:\gamma\mbox{ is a closed path
connecting $u$ and $v$}\}.
\end{equation*}
If $u$ and $v$ are not in the same closed cluster, we set $D(u,v)=\infty$.  The following proposition is a
corollary of Theorem 1.4 in \cite{GM07}.
\begin{proposition}\label{p7}
For any $\epsilon>0$, there exists $p_0=p_0(\epsilon)\in (0,1/2)$,
such that for all $p\in[0,p_0]$ and all large $n$ (depending on
$\epsilon$),
\begin{equation*}
\mathbf{P}_{\mathbb{Z}^2,p}^{bond}[D(0,n)\leq (1+\epsilon)n]\geq
1-\epsilon.
\end{equation*}
\end{proposition}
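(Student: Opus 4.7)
The plan is to deduce the bound from the shape theorem for the chemical distance in supercritical Bernoulli bond percolation on $\mathbb{Z}^2$, which is the content of Theorem 1.4 of \cite{GM07}. In the sign convention used here, a bond is closed with probability $1-p$, so for $p<1/2$ the closed bonds form a supercritical configuration of density $q:=1-p>1/2$. Let $\mathcal{C}_\infty$ denote the almost surely unique infinite closed cluster. Theorem 1.4 of \cite{GM07} then provides a deterministic constant $\mu(p)\in[1,\infty)$ such that, almost surely on the event $\{0\in\mathcal{C}_\infty\}$,
\begin{equation*}
\lim_{n\to\infty}\frac{D(0,ne_1)}{n}=\mu(p).
\end{equation*}

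The core step is to show $\mu(p)\to 1$ as $p\to 0$. The lower bound $\mu(p)\geq 1$ is trivial from $D(u,v)\geq\|u-v\|_1$. For the matching upper bound I would construct a short closed path explicitly: take the horizontal segment from $0$ to $ne_1$, and whenever a bond on it is open, replace it by the unit detour of three bonds going one step above (or below) the segment. For small $p$ each attempted detour consists of three closed bonds with probability $(1-p)^3$; if the upper detour fails one tries the lower one, and these independent tries fail simultaneously with probability $O(p^2)$. A Peierls-type renormalization argument (coarse-graining the segment into blocks of bounded size and declaring a block ``good'' if every bond on it is either closed or admits a nearby short closed detour) shows that the constructed path has length at most $(1+Cp)n$ with probability tending to $1$, hence $\mu(p)\leq 1+Cp$.

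Combining this with the elementary fact that $\mathbf{P}_{\mathbb{Z}^2,p}^{bond}[0\in\mathcal{C}_\infty]\to 1$ as $p\to 0$ (since the infinite-cluster density $\theta^{bond}(q)\to 1$ as $q\to 1$), I would, given $\epsilon>0$, choose $p_0=p_0(\epsilon)\in(0,1/2)$ small enough that for all $p\in[0,p_0]$ both $\mu(p)\leq 1+\epsilon/2$ and $\mathbf{P}[0\in\mathcal{C}_\infty]\geq 1-\epsilon/3$ (the same bound at $ne_1$ following by translation invariance). The shape theorem, applied on the event that both endpoints lie in $\mathcal{C}_\infty$, then yields $\mathbf{P}[D(0,ne_1)\leq(1+\epsilon)n]\geq 1-\epsilon$ for all sufficiently large $n$.

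The main obstacle is making this threshold uniform in $p\in[0,p_0]$, as the proposition requires: the almost sure convergence in the shape theorem only gives a threshold $N(p,\epsilon)$ that a priori depends on $p$. I would address this by bypassing the appeal to the shape theorem in the endgame and using the renormalized detour construction of the second paragraph directly: it produces an explicit closed path of length at most $(1+\epsilon)n$ with probability at least $1-\exp(-c(\epsilon)n)$, valid simultaneously for all $p\in[0,p_0]$ and all $n\geq N(\epsilon)$. This is strictly stronger than the proposition's claim and makes the required uniformity transparent.
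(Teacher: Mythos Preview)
Your overall strategy is sound and would yield a proof, but it diverges from the paper's argument and does more work than necessary.

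\textbf{Comparison with the paper.} The paper does not interpret Theorem~1.4 of \cite{GM07} as merely the shape theorem. It quotes that result directly as a large deviation upper bound: for each $\epsilon>0$ there is $p_1(\epsilon)\in(0,1/2)$ such that for every $p\in[0,p_1(\epsilon))$,
\[
\limsup_{n\to\infty}\frac{1}{n}\log\mathbf{P}_{\mathbb{Z}^2,p}^{bond}\bigl[0\leftrightarrow_c n,\ D(0,n)\geq(1+\epsilon)n\bigr]<0.
\]
In particular, the fact that the chemical-distance time constant tends to $1$ as $p\downarrow 0$ is already packaged inside this citation, so the paper does not argue it separately. The only other ingredient the paper uses is the elementary bound $\mathbf{P}_p[0\leftrightarrow_c n]\geq\theta(p)^2$ (by FKG and uniqueness of the infinite closed cluster), combined with continuity of $\theta$ and $\theta(0)=1$, to get $\mathbf{P}_p[0\leftrightarrow_c n]\to 1$ uniformly in $n$ as $p\to 0$. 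Your detour/renormalization construction reproves by hand a piece that the paper simply imports from \cite{GM07}.

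\textbf{On the uniformity issue.} Your concern that the threshold $N$ might depend on $p$ is legitimate, and the paper's write-up is terse on this point as well. However, your proposed resolution (abandon the shape theorem and rely on the explicit block construction) is heavier than needed. The event $\{D(0,n)\leq(1+\epsilon)n\}$ is monotone in the configuration: it becomes more likely as more bonds are declared closed, hence $p\mapsto\mathbf{P}_{\mathbb{Z}^2,p}^{bond}[D(0,n)\leq(1+\epsilon)n]$ is nonincreasing. Therefore it suffices to establish the inequality at the single value $p=p_0$; it then holds automatically for all $p\in[0,p_0]$ with the \emph{same} threshold $N=N(\epsilon)$. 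This one-line monotonicity observation removes the uniformity obstacle without any additional construction.

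\textbf{A minor technical point.} Your statement of the shape theorem, ``almost surely on $\{0\in\mathcal{C}_\infty\}$, $D(0,ne_1)/n\to\mu(p)$'', is not quite correct as written: for fixed $n$ the endpoint $ne_1$ need not lie in $\mathcal{C}_\infty$, in which case $D(0,ne_1)=\infty$. One should either restrict to the (density-one) set of $n$ with $ne_1\in\mathcal{C}_\infty$, or work with the large deviation statement on the event $\{0\leftrightarrow_c n\}$, as the paper does.
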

\begin{proof}
Given two sites $u,v\in\mathbb{Z}^2$, we denote by
$u\leftrightarrow_c v$ the event that there is a closed path
connecting $u$ and $v$, and by $v\leftrightarrow_c \infty$ the event
that $v$ is in an infinite closed cluster.  Let
$\theta(p):=\mathbf{P}_{\mathbb{Z}^2,p}^{bond}[0\leftrightarrow_c
\infty]$.

It is well known that there is a.s. a unique infinite closed cluster
when $0\leq p<1/2=p_c^{bond}(\mathbb{Z}^2)$ and there is a.s. no
infinite closed cluster when $1/2\leq p\leq 1$; see e.g.
\cite{Gri99}.  From this and the FKG inequality, we obtain that for
all $n\in\mathbb{N}$,
\begin{equation}\label{e29}
\mathbf{P}_{\mathbb{Z}^2,p}^{bond}[0\leftrightarrow_c
n]\geq\mathbf{P}_{\mathbb{Z}^2,p}^{bond}[0\leftrightarrow_c\infty,n\leftrightarrow_c\infty]\geq
\theta(p)^2.
\end{equation}
It is also well known that $\theta(p)$ is a continuous function of
$p$ on the interval $[0,1]$ (for bond percolation on
$\mathbb{Z}^2$); see e.g. \cite{Gri99}.  Moreover, it is clear that
$\theta(0)=1$.  These facts and (\ref{e29}) imply that
\begin{equation}\label{e68}
\mathbf{P}_{\mathbb{Z}^2,p}^{bond}[0\leftrightarrow_c n]\rightarrow
1\quad\mbox{uniformly in $n$ as $p\rightarrow 0$}.
\end{equation}
By Theorem 1.4 of \cite{GM07}, for each $\epsilon>0$, there exists
$p_1(\epsilon)\in(0,1/2)$, such that for every $p\in
[0,p_1(\epsilon))$,
\begin{equation}\label{e69}
\limsup_{n\rightarrow\infty}\frac{\log\mathbf{P}_{\mathbb{Z}^2,p}^{bond}[0\leftrightarrow_c
n,D(0,n)\geq (1+\epsilon)n]}{n}<0.
\end{equation}
Combining (\ref{e68}) and (\ref{e69}), we obtain the desired result.
\end{proof}

\section{Scaling limit of near-critical percolation clusters}\label{continuum}
The proof of our main result relies on the scaling limit of near-critical percolation clusters.
We start by setting notation in Section \ref{notation}.  By using the scaling limit results in \cite{GPS18}
and the approach from \cite{CCK19}, we construct the scaling limit of the collection
of blue clusters for near-critical percolation in Section \ref{whole}.
\subsection{Setting and notation}\label{notation}
For ease of notation, we write $\omega^{\infty}:=\omega^{\infty}(-1)$ and $\mathbf{P}^{\infty}:=\mathbf{P}^{\infty,-1}$, where $\omega^{\infty}(-1)$ and $\mathbf{P}^{\infty,-1}$ are defined in Theorem \ref{t4}.  Recall that $\eta=\eta(p):=L(p)^{-1}$.

Consider Bernoulli percolation on $\eta\mathbb{T}$ with $p\in(p(1),p_c)$.  We shall view each site in $\eta\mathbb{T}$ as its
corresponding (topologically closed) regular hexagons in
$\eta\mathbb{H}$.  \textbf{Clusters} are connected components of blue
or yellow hexagons.  In Section \ref{continuum} we consider
$\omega_p^{\eta}$ as a union of blue clusters, and construct the limit of large clusters in $\omega_p^{\eta}$ as
$p\uparrow p_c$.  For this purpose, we need to introduce some
notation borrowed from \cite{CCK19}, with small modifications
adapted to our setup.

For a set $A\subset\mathbb{C}$, let $\diam_{\infty}(A)$ denote the
$L^{\infty}$-diameter of $A$.  If $D$ is a simply
connected region with piecewise smooth boundary, we let
$\mathscr{C}_D^p(\delta)$ denote the collection of (blue) clusters of
$\omega_p^{\eta}$, which are contained in $D$ and have $L^{\infty}$-diameters
at least $\delta$.  That is,
\begin{equation*}
\mathscr{C}_D^p(\delta):=\left\{\mathcal
{C}\mbox{ is a cluster in }\omega_p^{\eta}: \mathcal {C}\subset
D\mbox{ and }\diam_{\infty}(\mathcal {C})\geq\delta\right\}.
\end{equation*}
Let $\mathscr{C}_D^p$ denote the collection of all clusters of
$\omega_p^{\eta}$ which are contained in $D$, and let $\mathscr{C}^p(\delta)$ denote the collection of all
clusters of $\omega_p^{\eta}$ with $L^{\infty}$-diameter at least $\delta$.  We write
$\mathscr{C}_k^p(\delta):=\mathscr{C}_{\Lambda_k}^p(\delta)$ for
short.

Let $A,B$ be two subsets of $\mathbb{C}$.  The Hausdorff
distance between $A,B$ is defined by
\begin{equation}\label{e2}
d_H(A,B):=\inf\{\epsilon>0:A+\Lambda_{\epsilon}\supset B\mbox{ and
}B+\Lambda_{\epsilon}\supset A\},
\end{equation}
where $A+\Lambda_{\epsilon}:=\{x+y\in\mathbb{C}:x\in A,y\in
\Lambda_{\epsilon}\}$.  Denote by $\mathcal {S}_R$ the complete
separable metric space of closed connected subsets of $\Lambda_R$ with the
metric (\ref{e2}).

Recall the definition of the distance function $\Delta(\cdot,\cdot)$
in Section \ref{quad}.  The distance $D_H$ between subsets of
$\hat{\mathbb{C}}$ is defined by
\begin{equation}\label{e6}
D_H(A,B):=\inf\{\epsilon>0:\forall x\in A, \exists y\in B\mbox{ such
that }\Delta(x,y)\leq\epsilon\mbox{ and vice versa}\}.
\end{equation}
Denote by $\mathcal {S}_{\infty}$ the complete separable metric
space of closed connected subsets of $\hat{\mathbb{C}}$ with the metric
(\ref{e6}).

The distance $\widehat{\dist}$ between finite collections i.e., sets of subsets of $\mathbb{C}$, denoted by $\mathscr{S},\mathscr{S}'$, is
defined by
\begin{equation}\label{e10}
\widehat{\dist}(\mathscr{S},\mathscr{S}'):=\min_{\phi}\max_{S\in\mathscr{S}}d_H(S,\phi(S)),
\end{equation}
where the infimum is taken over all bijections
$\phi:\mathscr{S}\rightarrow\mathscr{S}'$.  In case
$|\mathscr{S}|\neq|\mathscr{S}'|$ we define the distance to be
infinite.  To account for possibly infinite collections,
$\mathscr{S}$ and $\mathscr{S}'$, of subsets of
$\mathbb{C}$, we define
\begin{equation}\label{e115}
\dist(\mathscr{S},\mathscr{S}'):=\inf\{\epsilon:\forall
A\in\mathscr{S}, \exists B\in \mathscr{S}'\mbox{ such that
}d_H(A,B)\leq\epsilon\mbox{ and vice versa}\}.
\end{equation}
Similarly, for collections $\mathscr{S}$ and $\mathscr{S}'$ of subsets of
$\hat{\mathbb{C}}$, we write
\begin{equation}\label{e12}
\Dist(\mathscr{S},\mathscr{S}'):=\inf\{\epsilon:\forall
A\in\mathscr{S}, \exists B\in \mathscr{S}'\mbox{ such that
}D_H(A,B)\leq\epsilon\mbox{ and vice versa}\}.
\end{equation}
Note that the metrics $\dist$ and $\Dist$ are
equivalent on bounded regions, and convergence in $\widehat{\dist}$ implies convergence in
$\dist$ and $\Dist$.  Moreover, the space $\Omega_R$ (resp. $\Omega_{\infty}$) of closed
subsets of $\mathcal {S}_R$ (resp. $\mathcal {S}_{\infty}$) with the
metric $\dist$ (resp. $\Dist$) is also a complete separable metric space.
We denote by $\mathcal {B}_{R}$ (resp. $\mathcal {B}_{\infty}$) its Borel $\sigma$-algebra.

\subsection{Scaling limit of the collection of clusters}\label{whole}
In this subsection we show that the collection of large clusters of $\omega_{p}^{\eta}$ has a scaling limit as $p\uparrow p_c$.

Note that Theorem \ref{t6}, combined with the fact that
$(\mathscr{H},d_{\mathscr{H}})$ is a Polish metric space (see
Section \ref{quad}), implies that there is a coupling of the
measures $(\mathbf{P}_p^{\eta})$ and $\mathbf{P}^{\infty}$ on
$(\mathscr{H},d_{\mathscr{H}})$ in which
$\omega_p^{\eta}\rightarrow\omega^{\infty}$ a.s. as $p\uparrow p_c$.

The following theorem states that in a bounded region, the
collection of clusters converges to a collection of closed connected sets
(the ``continuum clusters'') as $p\uparrow p_c$.  It is an analog of
Theorems 1 and 11 in \cite{CCK19}.
\begin{theorem}\label{t1}
Let $k>\delta>0$, and let $\mathbf{P}$ be a coupling such that
$\omega_p^{\eta}\rightarrow\omega^{\infty}$ in
$(\mathscr{H},d_{\mathscr{H}})$ a.s. as $p\uparrow p_c$.  Then, as
$p\uparrow p_c$, $\mathscr{C}_{k}^p(\delta)$ converges in
$\mathbf{P}$-probability, in the metric $\widehat{\dist}$,
to a collection of closed connected sets in the interior of
$\Lambda_k$, which we denote by $\mathscr{C}_{k}(\delta)$.
Moreover, as $p\uparrow p_c$, $\mathscr{C}_{k}^p$
converges in $\mathbf{P}$-probability, in the metric $\dist$,
to a collection of closed connected sets which we denote by
$\mathscr{C}_{k}$.  Furthermore, $\mathscr{C}_{k}(\delta)$ and
$\mathscr{C}_{k}$ are measurable functions of $\omega^{\infty}$.
\end{theorem}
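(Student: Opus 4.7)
The strategy is to identify continuum clusters through quad-crossing connectivity at all finite scales, and then let the scale go to zero. Fix a small $\epsilon>0$ (to be sent to $0$ later) and partition $\Lambda_k$ into a regular grid of cells $B_1,\dots,B_M$ of side length $\epsilon$. For each ordered pair $(i,j)$, let $E_{ij}^p$ denote the event that some pair of sites, one in $B_i$ and one in $B_j$, lies in a common blue cluster of $\omega_p^{\eta(p)}$ contained in $\Lambda_k$. Up to boundary fluctuations of order $\eta(p)$, the indicator of $E_{ij}^p$ is a measurable function of the element of $\mathscr{H}$ associated with $\omega_p^{\eta(p)}$, expressible in terms of blue crossings of countably many quads inscribed in $\Lambda_k$. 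Combining the a.s.\ convergence $\omega_p^{\eta(p)}\to\omega^0$ with Lemma \ref{l15} applied to a countable dense family of quads yields that $\mathbf{1}_{E_{ij}^p}\to\mathbf{1}_{E_{ij}^0}$ in $\mathbf{P}$-probability, for limiting events $E_{ij}^0$ measurable with respect to $\omega^0$.

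The transitive closures of the relations $\{E_{ij}^{\cdot}\}$ partition the cells into connectivity classes; each class $\mathcal{K}$ yields a candidate cluster $U_{\mathcal{K}}:=\bigcup_{i\in\mathcal{K}}\overline{B_i}$, and retaining only those of diameter at least $\delta$ produces finite collections $\mathscr{C}_{k,\epsilon}^p(\delta)$ and $\mathscr{C}_{k,\epsilon}^0(\delta)$. The polychromatic six-arm bound (\ref{e67}), summed over cells, shows that with $\mathbf{P}$-probability $1-o_\epsilon(1)$ (uniformly in $p$ close to $p_c$) the cell partition correctly captures the cluster structure: no single cell is hit by two distinct clusters of diameter $\ge\delta$ (otherwise six alternating arms would emanate from a point of that cell), so no two genuine clusters get merged into one class; conversely a single cluster is never split by the partition since connectivity is preserved. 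Combined with a standard RSW/one-arm estimate based on (\ref{e78}), which controls the number of large clusters by a tight variable, these ingredients give
\[
\widehat{\dist}\bigl(\mathscr{C}_k^p(\delta),\mathscr{C}_{k,\epsilon}^p(\delta)\bigr)\le C\epsilon
\]
with high probability, and the same estimate holds for the corresponding limit objects.

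Consequently $(\mathscr{C}_{k,\epsilon}^0(\delta))_{\epsilon>0}$ is Cauchy in $(\Omega_k,\widehat{\dist})$, defining a limit $\mathscr{C}_k^0(\delta)$ which is automatically a measurable function of $\omega^0$ as a limit of measurable objects. For each fixed $\epsilon$, $\mathscr{C}_{k,\epsilon}^p(\delta)\to\mathscr{C}_{k,\epsilon}^0(\delta)$ in $\mathbf{P}$-probability (in $\widehat{\dist}$) from the indicator convergence established above. A triangle inequality then yields $\mathscr{C}_k^p(\delta)\to\mathscr{C}_k^0(\delta)$ in $\mathbf{P}$-probability in $\widehat{\dist}$. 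The second statement, concerning $\mathscr{C}_k^p\to\mathscr{C}_k^0$ in the weaker metric $\dist$, follows by applying the previous argument along a sequence $\delta_n\downarrow 0$ and noting that clusters of diameter at most $\delta_n$ affect $\dist$ by at most $\delta_n$; one sets $\mathscr{C}_k^0:=\bigcup_n\mathscr{C}_k^0(\delta_n)$.

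The main obstacle is the boundary $\partial\Lambda_k$: a discrete cluster contributing to $\mathscr{C}_k^p$ is constrained to lie in $\Lambda_k$, while a priori the corresponding continuum object might extend across $\partial\Lambda_k$ via a microscopic blue bridge that is invisible to the quad-crossing topology in the limit. This is precisely the boundary issue flagged in Subsection \ref{strategy} which prevents a direct transfer of the Camia--Newman-based argument of \cite{CCK19}. The fix is to invoke the half-plane 3-arm estimate (\ref{e72}): a blue bridge crossing through a $2\epsilon$-neighborhood of $\partial\Lambda_k$ and connecting two otherwise-separated macroscopic regions would force three alternating arms in a half-plane of radius $\epsilon$, an event of probability $o_\epsilon(1)$ summable over the boundary strip. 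This guarantees that every element of $\mathscr{C}_k^0(\delta)$ lies strictly inside $\Lambda_k$, at positive distance from $\partial\Lambda_k$, completing the proof.
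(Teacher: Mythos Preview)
Your overall strategy—approximate clusters by unions of $\epsilon$-cells, control the approximation via arm-event estimates, and pass to the limit through quad-crossing convergence—matches the paper's (which follows \cite{CCK19}). But the combinatorial core has a genuine gap. Taking the \emph{transitive closure} of $\{E_{ij}\}$ does not recover individual clusters: a cell $B_i$ may meet a large cluster $C_1$ together with a small cluster $D_1$ reaching into a neighbouring cell, which in turn meets another small cluster $D_2$, and so on; such chains of small clusters can weld the equivalence class of $C_1$ to that of a distinct large cluster $C_2$ without any single cell meeting two \emph{large} clusters, so your six-arm hypothesis does not prevent merging. Moreover the six-arm claim itself is incorrect as stated: two distinct clusters of diameter $\ge\delta$ sharing an $\epsilon$-cell only force a polychromatic \emph{four}-arm event from that cell (one blue arm per cluster plus two separating yellow arms), and by (\ref{e9}) the four-arm exponent is strictly below $2$, so the union bound over the $O(\epsilon^{-2})$ cells diverges.

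The paper's fix, taken verbatim from \cite{CCK19}, is to replace transitive closure by \emph{maximal complete subgraphs} of the connectivity graph $G_\epsilon$ (cells joined when adjacent or blue-connected inside $\Lambda_k$): the cells meeting a single cluster always form a clique, and one shows that on a good event every maximal clique of diameter $\ge\delta$ comes from exactly one cluster. The controlling arm event is not the six-arm event at a generic cell but $\mathcal{A}^j_{(0),(101)}$ of Lemma~\ref{l2}—one free blue arm together with a half-plane $(101)$ triple—evaluated at an \emph{extremal} cell of the clique (say its leftmost), where extremality forces the half-plane structure. By (\ref{e84}) this has exponent $2+\lambda_{1,3}^+>2$ and is therefore summable over cells. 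This is precisely Definition~5 and Propositions~1--2 of \cite{CCK19}, which the paper invokes in Step~1 of its sketch, with Lemmas~\ref{l1} and~\ref{l2} providing the near-critical inputs (convergence of arm-event indicators under the coupling, and the required exponent bounds).
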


The following theorem extends the above theorem to the case of the
full plane and, moreover, states that the collection of full-plane
continuum clusters is invariant under rotations and translations.
It is an analog of Theorems 3 and 4 in \cite{CCK19}.
\begin{theorem}\label{t3}
Let $\mathbb{P}_k$ denote the distribution of $\mathscr{C}_{k}$. There
exists a unique probability measure $\mathbb{P}$ on $(\Omega_{\infty},\mathcal{B}_{\infty})$
that is supported on collections of bounded (in the Euclidean metric), closed, connected
subsets of $\hat{\mathbb{C}}$, which is the full plane limit of
$\mathbb{P}_k$ in the sense that,
$\mathbb{P}|_{\Lambda_k}=\mathbb{P}_k$ for each $k\in\mathbb{N}$.
Moreover, the following statements hold:
\begin{itemize}
\item Let $\mathbf{P}$ be a coupling such that
$\omega_p^{\eta}\rightarrow\omega^{\infty}$ in
$(\mathscr{H},d_{\mathscr{H}})$ a.s. as $p\uparrow p_c$.  Then, as $p\uparrow p_c$, $\mathscr{C}^p$ converges in $\mathbf{P}$-probability, in the metric $\Dist$, to a collection of bounded, closed, connected subsets of $\hat{\mathbb{C}}$ which
we denote by $\mathscr{C}$.  Moreover, $\mathscr{C}$ is a
measurable function of $\omega^{\infty}$ and the distribution of
$\mathscr{C}$ is $\mathbb{P}$.  Furthermore, for each $k\in\mathbb{N}$ and $\delta>0$,
$\mathscr{C}(\delta)|_{\Lambda_k}$ is a.s. a finite set, and $\widehat{\dist}(\mathscr{C}^{p}(\delta)|_{\Lambda_k},\mathscr{C}(\delta)|_{\Lambda_k})$,
$\dist(\mathscr{C}^p|_{\Lambda_k},\mathscr{C}|_{\Lambda_k})\rightarrow
0$ in probability as $p\uparrow p_c$.
\item For any $k,n\in\mathbb{N}$ with $n\geq 2k+1$, the configurations $\mathscr{C}|_{\Lambda_k}$ and $\mathscr{C}|_{\Lambda_k(n)}$ are independent.
\item For $\theta\in[0,2\pi]$ and $x\in \mathbb{C}$, let $f(z):=e^{i\theta}z+x$ be a map from $\mathbb{C}$ to $\mathbb{C}$.  Set
$f(\mathscr{C}):=\{f(\mathcal {C}):\mathcal {C}\in
\mathscr{C}\}$.  Then $f(\mathscr{C})$ and $\mathscr{C}$ have
the same distribution.  That is, $\mathbb{P}$ is invariant under rotations and translations.
\end{itemize}
\end{theorem}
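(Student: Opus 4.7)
The plan is to bootstrap Theorem~\ref{t1} from bounded boxes to the full plane, and then to extract the stated invariance and independence from the corresponding properties of the discrete model and of $\omega^0$. First, I would verify consistency of the family $(\mathbb{P}_k)_{k \in \mathbb{N}}$: working under a single coupling with $\omega_p^{\eta(p)} \to \omega^0$ almost surely, Theorem~\ref{t1} produces the $\mathscr{C}_k^0$ as a.s.\ limits, and for $k < k'$ the clusters in $\mathscr{C}_{k'}^0$ that lie inside $\Lambda_k$ are precisely the clusters in $\mathscr{C}_k^0$; this is inherited from the discrete identity $\{\mathcal{C} \in \mathscr{C}_{k'}^p : \mathcal{C} \subset \Lambda_k\} = \mathscr{C}_k^p$. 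This compatibility yields a full-plane measure $\mathbb{P}$ via Kolmogorov's extension theorem, unique once one knows it is supported on collections of bounded sets.

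The key technical step is to establish that $\omega^0$ produces only bounded clusters, so that the direct limit $\mathscr{C}^0 := \bigcup_k \mathscr{C}_k^0$ really does account for every continuum cluster. The required input is exponential decay of large clusters: for any fixed $k$ and $R > k$, the probability under $\mathbf{P}_p^{\eta(p)}$ of a blue $1$-arm from $\Lambda_k$ to $\partial \Lambda_R$ is bounded, via the yellow circuit estimate (\ref{e40}) applied to dyadic dual annuli between $k$ and $R$, by a quantity that decays exponentially in $R$ (the rescaled correlation length being $1$). Since such a $1$-arm event can be realised as a monotone union of quad-crossings, Lemma~\ref{l15} transfers the bound to $\omega^0$, and Borel--Cantelli yields a.s.\ boundedness of every continuum cluster. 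Convergence $\mathscr{C}^p \to \mathscr{C}^0$ in $\Dist$ then follows by a standard decomposition: for any $\epsilon > 0$, choose $K$ large enough that with probability $> 1 - \epsilon$ no cluster of diameter $\geq \epsilon$ exits $\Lambda_K$ in either $\mathscr{C}^p$ or $\mathscr{C}^0$, and then invoke Theorem~\ref{t1} on $\Lambda_K$. Measurability of $\mathscr{C}^0$ as a function of $\omega^0$ and a.s.\ finiteness of $\mathscr{C}^0(\delta)|_{\Lambda_k}$ are inherited directly from Theorem~\ref{t1}.

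Finally, I address the three bullet points. The first is a direct consequence of the two steps above. For independence of $\mathscr{C}^0|_{\Lambda_k}$ and $\mathscr{C}^0|_{\Lambda_k(n)}$ when $n \geq 2k+1$: in the discrete model on $\eta(p)\mathbb{T}$, the two collections of clusters contained in these disjoint boxes are measurable with respect to disjoint families of site variables (for $\eta(p)$ small enough, thanks to the unit separation), hence independent; joint convergence from Theorem~\ref{t1} applied on $\Lambda_k \cup \Lambda_k(n)$ transfers this independence to the limit. For invariance under the affine isometries $z \mapsto e^{i\theta}z + x$: the map $\omega^0 \mapsto \mathscr{C}^0$ manifestly commutes with these transformations by construction, so invariance of $\mathbb{P}$ follows from the invariance of $\mathbf{P}^0$ stated in Theorem~\ref{t4}. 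The main obstacle I foresee is the boundedness step: although the discrete estimate (\ref{e40}) is standard, passing it cleanly to the continuum requires encoding ``some cluster exits $\Lambda_R$'' as a quad-crossing observable so that (\ref{e81}) of Lemma~\ref{l15} is available, and one must verify that the exponential rate in $R$ survives the limit $p \uparrow p_c$ uniformly in $p$.
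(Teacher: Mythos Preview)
Your plan is correct and matches the paper's approach: Kolmogorov extension from the consistent family $(\mathbb{P}_k)$, exponential tail control via (\ref{e40}) for the full-plane limit, and independence and invariance carried over from the discrete model and Theorem~\ref{t4}. Two execution points are worth flagging. First, the consistency step is not quite ``inherited'' from the discrete identity: restriction to $\Lambda_k$ does not automatically commute with Hausdorff-type limits (a continuum cluster in $\mathscr{C}_{k'}^0$ could in principle touch $\partial\Lambda_k$), and the paper closes this by invoking the half-plane 3-arm bound (\ref{e72}) to show $\mathbf{P}[\mathscr{C}_{k+\epsilon}^p(\delta)\setminus\mathscr{C}_k^p(\delta)\neq\emptyset]\to0$ as $\epsilon\to0$ uniformly in $p$. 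Second, your anticipated obstacle evaporates in the paper's treatment: (\ref{e40}) is applied only on the lattice, to obtain $\mathbf{P}[\Dist(\mathscr{C}_k^p,\mathscr{C}^p)\le\epsilon]\ge1-\epsilon$ for $k$ large uniformly in $p$ (equation (\ref{e117})), and this together with Theorem~\ref{t1} already gives $\Dist(\mathscr{C}^p,\mathscr{C}^0)\to0$; no transfer of the arm estimate to $\omega^0$ is needed. Similarly, for invariance the paper does not claim that $\omega^0\mapsto\mathscr{C}^0$ ``manifestly commutes'' with rigid motions (the box construction is axis-aligned), but instead takes a fresh coupling $\omega_p^{\eta(p)}\to f(\omega^0)$, rebuilds the limit on rotated boxes to obtain $f(\mathscr{C}_k^0)$ as the limit of $\mathscr{C}_{f(\Lambda_k)}^p$, and lets $k\to\infty$.
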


The proofs of Theorems \ref{t1} and \ref{t3} are analogous to those
of the corresponding results in the critical case in \cite{CCK19}.
Before moving to the proofs, we need to define the arm events that
are measurable in the Borel $\sigma$-field of the quad-crossing
topology.  These events were first introduced in \cite{GPS13}.  We
shall borrow the notation and definitions from \cite{CCK19}, with a
slight modification adapted to our setting.  (Note that for our FPP
model a site is open when it takes the value 0, while in the
standard percolation model a site is open when it takes the value 1,
so the meaning of the color sequence in the present paper is
opposite to that in \cite{CCK19}.)

For $S\subset\hat{\mathbb{C}}$, let $\partial S$ and $int(S)$ denote
the boundary and interior of $S$, respectively. The arm events are defined as follows; see Figure \ref{armdefine}.

\begin{figure}
\begin{center}
\includegraphics[height=0.35\textwidth]{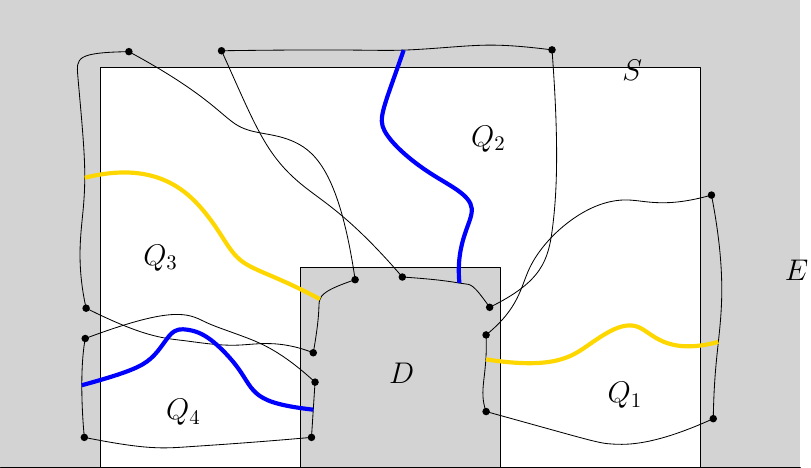}
\caption{Defining the arm event $D\xleftrightarrow{\kappa,S} E$ (in Definition \ref{def}) by using quad-crossings.  Here, $\kappa=(1010)$, $D$ is the gray square, $S$ is the upper half-plane, $E$ is the gray region outside the white region in the upper half-plane; the quads $Q_2, Q_4$ are crossed ($\kappa_2=\kappa_4=0$), shown using blue curves, while the quads $Q_1, Q_3$ are not crossed ($\kappa_1=\kappa_3=1$), shown using yellow curves (i.e., dual crossed).}\label{armdefine}
\end{center}
\end{figure}

\begin{definition}\label{def}
Let $l\in\mathbb{Z}_+$ and $\kappa\in \{0,1\}^l$.  Let $S$ be
$\hat{\mathbb{C}}$, or a simply connected subset of
$\hat{\mathbb{C}}$ with piecewise smooth boundary.  Let $D,E$ be two
disjoint simply connected subsets of $\hat{\mathbb{C}}$ with
piecewise smooth boundaries.  Let $D\xleftrightarrow{\kappa,S} E$
denote the event that there are $\delta>0$ and quads $Q_i\in
\mathcal {Q}_{int(S)}$ for $i=1,2,\ldots,l$ which satisfy the
following conditions:
\begin{enumerate}
\item $\omega\in\boxminus_{Q_i}$ for $i\in\{1,2,\ldots,l\}$ with $\kappa_i=0$ and $\omega\in\boxminus_{Q_i}^c$ for $i\in\{1,2,\ldots,l\}$ with
$\kappa_i=1$.
\item For all $i,j\in\{1,2,\ldots,l\}$ with $i\neq j$ and
$\kappa_i=\kappa_j$, the quads $Q_i$ and $Q_j$, viewed as subsets of
$\hat{\mathbb{C}}$, are disjoint, and are at distance at least
$\delta$ from each other and from the boundary of $S$.
\item $\Lambda_{\delta}+Q_i(\{0\}\times[0,1])\subset D$ and $\Lambda_{\delta}+Q_i(\{1\}\times[0,1])\subset
E$ for $i\in\{1,2,\ldots,l\}$ with $\kappa_i=0$.
\item $\Lambda_{\delta}+Q_i([0,1]\times\{0\})\subset D$ and $\Lambda_{\delta}+Q_i([0,1]\times\{1\})\subset
E$ for $i\in\{1,2,\ldots,l\}$ with $\kappa_i=1$.
\item The intersections $Q_i\cap D$, for $i=1,2,\ldots,l$, are at distance
at least $\delta$ from each other; the same holds for $Q_i\cap E$.
\item A counterclockwise order of the quads $Q_i$, for $i=1,2,\ldots,l$, is given
by ordering counterclockwise the connected components of $Q_i\cap D$
containing $Q_i(0,0)$.
\end{enumerate}
\end{definition}
We write $D\xleftrightarrow{\kappa} E$ for
$D\xleftrightarrow{\kappa,\hat{\mathbb{C}}}E$.

In the following we consider some special arm events.  For
$z\in\mathbb{C}$ and $a>0$, let
$H_1(z,a),H_2(z,a),H_3(z,a),H_4(z,a)$ denote the left, lower, right,
and upper half-planes which have the right, top, left and bottom
sides of $\Lambda_a(z)$ on their boundaries, respectively.  For
$j=1,2,3,4$, $\kappa\in\{0,1\}^l$ and $\kappa'\in\{0,1\}^{l'}$ with
$l,l'\geq 0$, we define the event $\mathcal
{A}_{\kappa,\kappa'}^j(z;a,b)$ where there are $l+l'$
disjoint arms with color sequence
$\kappa\vee\kappa':=(\kappa_1,\ldots,\kappa_l,\kappa_1',\ldots,\kappa_{l'}')$
in $A(z;a,b)$ so that the $l'$ arms, with color sequence $\kappa'$,
are in the half-plane $H_j(z,a)$.  That is,
\begin{align*}
\mathcal
{A}_{\kappa,\kappa'}^j(z;a,b):=\left\{\Lambda_a(z)\xleftrightarrow{\kappa\vee\kappa'}
\left(\hat{\mathbb{C}}\backslash\Lambda_b(z)\right)\right\}\cap\left\{\Lambda_a(z)\xleftrightarrow{\kappa',H_j(z,a)}
\left(\hat{\mathbb{C}}\backslash\Lambda_b(z)\right)\right\}.
\end{align*}
In the notation above, when $z$ is omitted, it is assumed to be 0.
When $\kappa'=\emptyset$, both the subscript $\kappa'$ and the
superscript $j$ will be omitted.  See Figure \ref{arm} for an illustration of an arm event.
\begin{figure}
\begin{center}
\includegraphics[height=0.35\textwidth]{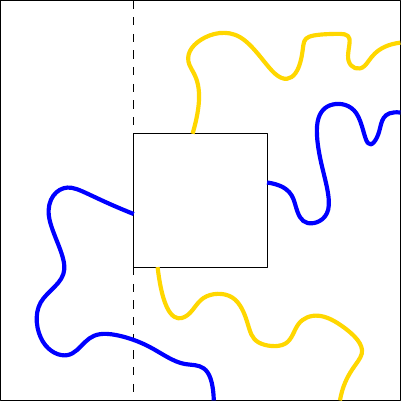}
\caption{Illustration of the event $\mathcal
{A}_{(0),(101)}^3(a,b)$.}\label{arm}
\end{center}
\end{figure}

Lemma \ref{l1} below is a near-critical analog of Lemma 10 of \cite{CCK19}
in the critical case.  As noted in \cite{CCK19}, although
Lemma 2.9 in \cite{GPS13} is a slightly weaker version of Lemma 10
in \cite{CCK19}, the proof of the former extends immediately to that
of the latter.  Similarly, Lemma 2.5 in \cite{GPS18b} (see also
Theorem 9.5 in \cite{GPS18}) is a slightly weaker version of Lemma
\ref{l1}, and its proof extends to this more general case. (The proof
of Lemma 2.9 in \cite{GPS13} works, using the stability for arm
events near criticality (\ref{e7}), together with the existence of
the near-critical scaling limit, Theorem \ref{t6}.)
\begin{lemma}\label{l1}
Under a coupling $\mathbf{P}$ of $(\mathbf{P}_p^{\eta})$ and
$\mathbf{P}^{\infty}$ on $(\mathscr{H},d_{\mathscr{H}})$ such that
$\omega_p^{\eta}\rightarrow\omega^{\infty}$ almost surely as $p\uparrow
p_c$, we have for events $\mathcal
{D}\in\{\{A\xleftrightarrow{(0),S}B\}, \{A\xleftrightarrow{(101),S}B\}, \mathcal
{A}_{\kappa,\kappa'}^j(z;a,b)\}$,
\begin{equation*}
\mathbf{1}_{\mathcal
{D}}\left(\omega_p^{\eta}\right)\rightarrow\mathbf{1}_{\mathcal
{D}}(\omega^{\infty})\qquad\mbox{in $\mathbf{P}$-probability as $p\uparrow
p_c$},
\end{equation*}
for
$(\kappa,\kappa')\in\{((0),\emptyset),((0101),\emptyset),((101010),\emptyset),(\emptyset,(101)),((0),(101))\}$, rectangle $S\subset \mathbb{C}$, $j\in\{1,2,3,4\},
0<a<b$ and $A,B$ disjoint simply connected subsets of $\mathbb{C}$
with piecewise smooth boundaries.
\end{lemma}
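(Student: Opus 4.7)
The plan is to follow the strategy of Lemma 2.9 in \cite{GPS13} and its extension Lemma 10 in \cite{CCK19}, replacing the critical arm-event estimates by their near-critical counterparts collected in Section \ref{classic}. Each event $\mathcal{D}$ in the list is, by Definition \ref{def}, a finite Boolean combination of quad-crossings $\boxminus_{Q_i}$ and their complements. Since $\omega_p^{\eta(p)}\to\omega^0$ almost surely in $(\mathscr{H},d_{\mathscr{H}})$ by Theorem \ref{t6} and the hypothesized coupling, it suffices to show that $\mathbf{1}_{\mathcal{D}}$ is almost surely continuous at $\omega^0$; convergence in $\mathbf{P}$-probability of $\mathbf{1}_{\mathcal{D}}(\omega_p^{\eta(p)})$ to $\mathbf{1}_{\mathcal{D}}(\omega^0)$ then follows.

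For the one-arm and monochromatic-arm events $\{A\xleftrightarrow{(0),S}B\}$ and $\{A\xleftrightarrow{(101),S}B\}$, continuity at $\omega^0$ off a $\mathbf{P}^0$-null set is essentially immediate: lower semicontinuity of crossings in the quad topology, combined with the a-priori $1$-arm upper bound \eqref{e78} (which implies that $\omega^0$ almost surely does not lie on a ``crossing boundary'' of a fixed quad), handles both directions.

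For the multi-arm events $\mathcal{A}_{\kappa,\kappa'}^j(z;a,b)$ and $\mathcal{A}_{\kappa,\kappa'}^{j,j+1}(z;a,b)$, I would introduce inner and outer $\epsilon$-approximations $\mathcal{D}_{\epsilon}^{-}\subset\mathcal{D}\subset\mathcal{D}_{\epsilon}^{+}$ in which the quads realizing the arms are forced to be $\epsilon$-separated from each other and from the boundaries of the relevant half- or quarter-planes. Each $\mathcal{D}_{\epsilon}^{\pm}$ is a continuity set of $\mathbf{1}_{\cdot}$ in $(\mathscr{H},d_{\mathscr{H}})$, so it remains to show
\begin{equation*}
\limsup_{\epsilon\to 0}\sup_{p\in(p(1),p_c]}\mathbf{P}_p^{\eta(p)}\bigl[\mathcal{D}_{\epsilon}^{+}\setminus\mathcal{D}_{\epsilon}^{-}\bigr]=0.
\end{equation*}
The symmetric-difference event forces an additional, unexpected arm in a small annulus of scale $\epsilon$, which produces either a polychromatic $6$-arm configuration (controlled by \eqref{e67}), an extra arm in a half-plane (controlled by \eqref{e79}--\eqref{e72}), or a monochromatic-arm improvement (\eqref{e92}); these localized events are then glued to the macroscopic arm configuration via quasi-multiplicativity \eqref{e71}, yielding an $O(\epsilon^{\alpha})$ bound.

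The main obstacle is making the above display genuinely uniform in $p\in(p(1),p_c]$: the small annulus sits at scale $\epsilon$ while the mesh is $\eta(p)=L(p)^{-1}$, so the arm estimates must hold on all intermediate scales between $\eta(p)$ and $1$. This uniformity is supplied precisely by the stability of arm events near criticality \eqref{e7}, which lifts the critical bounds to all $p\in(p(1),p_c]$ with constants independent of $p$; this is exactly the ingredient that lets the critical argument of \cite{GPS13,CCK19} transfer unchanged to the present setting. Once this uniform boundary estimate is established, the continuity of $\mathbf{1}_{\mathcal{D}_{\epsilon}^{\pm}}$ at $\omega^0$ passes to the limit, and letting $\epsilon\to 0$ yields $\mathbf{1}_{\mathcal{D}}(\omega_p^{\eta(p)})\to\mathbf{1}_{\mathcal{D}}(\omega^0)$ in $\mathbf{P}$-probability as desired.
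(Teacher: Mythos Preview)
Your proposal is correct and follows essentially the same approach as the paper: the paper does not give a detailed proof but simply notes that the argument of Lemma 2.9 in \cite{GPS13} (and its extensions in \cite{CCK19,GPS18b,GPS18}) goes through once the critical arm-event estimates are replaced by their near-critical analogs via the stability \eqref{e7}, together with the existence of the near-critical scaling limit (Theorem \ref{t6}). Your write-up is in fact more explicit than the paper about the inner/outer $\epsilon$-approximation and the symmetric-difference bound, but the underlying strategy is identical.
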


The following bounds on the probabilities of some special arm events will be used later.  Note that inequalities (\ref{e67}) and (\ref{e72}) imply (\ref{e83})
immediately; a combination of the right-hand inequality in
(\ref{e83}), Reimer's inequality and (\ref{e78}) gives (\ref{e84}).
\begin{lemma}\label{l2}
There exist constants $\lambda_6,\lambda_{1,3}>0$, such that for
any fixed $K\geq 1$, there exists a constant $C(K)>0$, such that for all
$p\in(p(1),p_c)$, $\eta\leq a<b\leq K$ and $j\in\{1,2,3,4\}$,
\begin{align}
&\mathbf{P}_p^{\eta}[\mathcal {A}_{(101010)}(a,b)]\leq
C\left(\frac{a}{b}\right)^{2+\lambda_6},\qquad\mathbf{P}_p^{\eta}[\mathcal
{A}_{\emptyset,(101)}^j(a,b)]\leq
C\left(\frac{a}{b}\right)^2,\label{e83}\\
&\mathbf{P}_p^{\eta}[\mathcal {A}_{(0),(101)}^j(a,b)]\leq
C\left(\frac{a}{b}\right)^{2+\lambda_{1,3}}.\label{e84}
\end{align}
\end{lemma}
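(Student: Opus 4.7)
The plan is to reduce each inequality in Lemma~\ref{l2} to the preliminary bounds collected in Subsection~\ref{classic} by rescaling, with a single application of Reimer's inequality at the end. First I observe that on $\eta(p)\mathbb{T}$, the annulus $A(a,b)$ with $\eta(p)\leq a<b\leq K$ corresponds under $z\mapsto z/\eta(p)=zL(p)$ to an annulus $A(aL(p),bL(p))$ on $\mathbb{T}$ with $1\leq aL(p)<bL(p)\leq KL(p)$. This falls squarely within the regime $1\leq r<R\leq K'L(p)$ (with $K'=K$) in which the near-critical estimates (\ref{e67}), (\ref{e72}), and (\ref{e78}) are valid.

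For the first bound in (\ref{e83}), the color sequence $(101010)$ is polychromatic of length $6$, so (\ref{e67}) applied on the rescaled annulus gives $\mathbf{P}_p^{\eta(p)}[\mathcal{A}_{(101010)}(a,b)]\leq C(a/b)^{2+\lambda_6}$. For the second bound in (\ref{e83}), the event $\mathcal{A}_{\emptyset,(101)}^j(a,b)$ requires three disjoint arms with color sequence $(101)$ inside the half-plane $H_j(0,a)$; after rescaling and an obvious rotation reducing $j$ to a choice of $\theta\in\{0,\pi/2,\pi,3\pi/2\}$, the bound (\ref{e72}) gives $C(a/b)^2$.

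For (\ref{e84}), the event $\mathcal{A}_{(0),(101)}^j(a,b)$ asks for one blue arm traversing the full annulus $A(a,b)$ together with three further disjoint arms of colors $(1,0,1)$ inside $H_j(0,a)$. Since the four arms are pairwise disjoint sequences of hexagons, the occurrence of $\mathcal{A}_{(0),(101)}^j$ is witnessed by disjoint sets of hexagons that realize simultaneously the full-annulus blue $1$-arm event $\mathcal{A}_{(0)}(a,b)$ and the half-plane $3$-arm event $\mathcal{A}_{\emptyset,(101)}^j(a,b)$. Reimer's inequality therefore yields
\begin{equation*}
\mathbf{P}_p^{\eta(p)}\bigl[\mathcal{A}_{(0),(101)}^j(a,b)\bigr]\leq \mathbf{P}_p^{\eta(p)}\bigl[\mathcal{A}_{(0)}(a,b)\bigr]\cdot \mathbf{P}_p^{\eta(p)}\bigl[\mathcal{A}_{\emptyset,(101)}^j(a,b)\bigr].
\end{equation*}
Applying (\ref{e78}) to the first factor and the just-proved right-hand side of (\ref{e83}) to the second gives $C(a/b)^{\lambda_1}\cdot C(a/b)^2=C(a/b)^{2+\lambda_1}$, so one may take $\lambda_{1,3}^+:=\lambda_1$.

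The only real obstacle is to check that Reimer's inequality applies to the arm events in the form given by Definition~\ref{def}. On the discrete configuration $\omega_p^{\eta(p)}$ each of these events is a cylinder event in the Bernoulli hexagon variables, monotone in the required color on each hexagon, and the witnessing arms are hexagon-disjoint by construction, which is exactly the setup for Reimer's inequality; so no new ingredient beyond the preliminaries is needed.
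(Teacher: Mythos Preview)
Your proof is correct and follows essentially the same approach as the paper: the paper simply notes that (\ref{e67}) and (\ref{e72}) give (\ref{e83}) immediately, and that combining the right-hand inequality in (\ref{e83}) with Reimer's inequality and (\ref{e78}) gives (\ref{e84}). Your write-up adds the explicit rescaling step and the verification that Reimer applies to the discrete arm events, but the logical structure is identical.
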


Now we are ready to prove Theorems \ref{t1} and
\ref{t3}.  Since the proofs are analogous to those of the
corresponding results in the critical case in \cite{CCK19}, we only give a sketch.

\begin{proof}[Sketch of proof of Theorem \ref{t1}]
The proof is essentially the same as the proof of
Theorem 11 (see also Theorem 1 in \cite{CCK19} for its weaker version) in
\cite{CCK19}, by using estimates on near-critical arm events and the
existence of the near-critical scaling limit.  We shall give a sketch of the proof below,
and refer the reader to Sections 5 and 6 of \cite{CCK19} for the
details.

\textbf{Step 1}.  As in Section 5 of \cite{CCK19}, we construct two
approximations of blue clusters with $L^{\infty}$-diameters at least
$\delta>0$, which are completely contained in $\Lambda_k$.  The
first one relies solely on the arm events (see Definition 5 in
\cite{CCK19}), while the other is simply the union of $\epsilon$-boxes
which intersect the cluster.  It was showed in the critical case that when the mesh size is
small, with high probability these two approximations coincide; see Propositions 1 and 2 in Section 5 of \cite{CCK19}).  The
results in Section 5 of \cite{CCK19} just use estimates on
critical arm events given by Lemmas 4 and 5 in that paper.  Lemma \ref{l2} is the
corresponding lemma needed for our near-critical case.  Then we can obtain the ``clusters-approximations result'' (i.e.,
analog of Propositions 1 and 2 in \cite{CCK19}) in the near-critical
case by using Lemma \ref{l2} and the proof of Propositions 1 and 2
in \cite{CCK19}.

\textbf{Step 2}.  Following the proof of Theorem 11 given in Section
6 of \cite{CCK19}, we can derive our Theorem \ref{t1} by using the
clusters-approximations result in Step 1, Theorem \ref{t4} and Lemma
\ref{l1}.
\end{proof}

\begin{proof}[Proof of Theorem \ref{t3}]
The proof of the existence and uniqueness of the measure
$\mathbb{P}$ is analogous to the proofs of Theorem 3 in \cite{CCK19} and Theorem 6 in
\cite{CN06}.  Let $1\leq k\leq k_1<k_2$.  First, we claim that the marginal distributions
$\mathbb{P}_{k_1}|_{\Lambda_k}$ and $\mathbb{P}_{k_2}|_{\Lambda_k}$ are the same.  For this,
it suffices to show that under the coupling $\mathbf{P}$, we have $\dist(\mathscr{C}_{k_2}|_{\Lambda_{k_1}},\mathscr{C}_{k_1})=0$ with probability 1 for any integers $1\leq k_1<k_2$.  By Theorem \ref{t1}, for fixed $k\in\mathbb{N}$, $\dist(\mathscr{C}_{k}^p,\mathscr{C}_{k})\rightarrow 0$
in $\mathbf{P}$-probability as $p\uparrow p_c$.  This combined with the fact (due to the half-plane 3-arm event having
exponent larger than 1; see (\ref{e72})) that for any fixed $\delta>0$ and $k\in\mathbb{N}$, $\mathbf{P}[\mathscr{C}_{k+\epsilon}^p(\delta)\backslash\mathscr{C}_k^p(\delta)\neq\emptyset]$ tends to 0 uniformly for all $p\in(p(1),p_c)$ as $\epsilon\rightarrow 0$, implies our claim.  Hence, the consistency relations needed to apply Kolmogorov's extension
theorem (see, e.g., \cite{Dur10}) are satisfied.  Since $\Omega_k$ and $\Omega_{\infty}$ are
complete separable metric spaces, the measurable spaces
$(\Omega_k,\mathcal {B}_k)$ and $(\Omega_{\infty},\mathcal {B}_{\infty})$
are standard Borel spaces, and $\mathbb{P}_k$ is a
probability measure on $(\Omega_k,\mathcal {B}_k)$, we can apply
Kolmogorov's extension theorem and conclude that there is a
unique probability measure $\mathbb{P}$ on
$(\Omega_{\infty},\mathcal {B}_{\infty})$ such that
$\mathbb{P}|_{\Lambda_k}=\mathbb{P}_k$ for each $k\in\mathbb{N}$ and $\mathbb{P}$ is supported on collections of bounded, closed and connected
subsets of $\hat{\mathbb{C}}$.

Write $\mathscr{C}:=\bigcup_k\mathscr{C}_k$.  Then
the above argument gives that the distribution of $\mathscr{C}$ is $\mathbb{P}$.  Using (\ref{e40}), it is easy to see that for each $\epsilon>0$,
there exists $K=K(\epsilon)\geq 1$ such that for all $k\geq K$ and
$p\in(p(1),p_c)$,
\begin{equation}\label{e117}
\mathbf{P}[\Dist(\mathscr{C}_{k}^p,\mathscr{C}^p)\leq
\epsilon]\geq 1-\epsilon.
\end{equation}
Theorem \ref{t1} and (\ref{e117}) imply that $\Dist(\mathscr{C}^p,\mathscr{C})\rightarrow 0$ in probability as $p\uparrow p_c$.  Moreover, the fact that $\mathscr{C}(\delta)|_{\Lambda_k}=\mathscr{C}_k(\delta)$ a.s. and $\dist(\mathscr{C}|_{\Lambda_k},\mathscr{C}_k)=0$ a.s. (by the above argument) together with
Theorem \ref{t1} implies the following statements immediately:
$\mathscr{C}$ is a measurable function of $\omega^{\infty}$; for each $k\in\mathbb{N}$ and $\delta>0$,
$\mathscr{C}(\delta)|_{\Lambda_k}$ is a.s. a finite set, and $\widehat{\dist}(\mathscr{C}^{p}(\delta)|_{\Lambda_k},\mathscr{C}(\delta)|_{\Lambda_k})$,
$\dist(\mathscr{C}^p|_{\Lambda_k},\mathscr{C}|_{\Lambda_k})\rightarrow 0$ in probability as $p\uparrow p_c$.

It follows easily from the above statement that for any $k,n\in\mathbb{N}$,
$\dist(\mathscr{C}^p|_{\Lambda_k(n)},\mathscr{C}|_{\Lambda_k(n)})\rightarrow 0$ in probability as $p\uparrow p_c$.
It is clear that for any $k\in\mathbb{N}$, $n\geq 2k+1$ and $p\in(p(1),p_c)$, the configurations $\mathscr{C}^p|_{\Lambda_k}$ and $\mathscr{C}^p|_{\Lambda_k(n)}$ are independent, so $\mathscr{C}|_{\Lambda_k}$ and $\mathscr{C}|_{\Lambda_k(n)}$ are independent.

We give the proof of the rotational invariance of $\mathbb{P}$ below;
similar proof works for the translation invariance, and we omit it.  For $\theta\in[0,2\pi]$, let $f(z)=e^{i\theta}z$ be a
rotation of $\mathbb{C}$.  By Theorem
\ref{t4}, $f(\omega^{\infty})$ and $\omega^{\infty}$ have the same distribution.
Therefore, similarly as the proof of Theorem \ref{t1}, we can use a
coupling such that $\omega_p^{\eta}\rightarrow f(\omega^{\infty})$ in
$(\mathscr{H},d_{\mathscr{H}})$ a.s., to show that
$\mathscr{C}_{f(\Lambda_k)}^p$ converges in distribution
to $f(\mathscr{C}_{k})$ with respect to the metric $\Dist$ (note
that $f(\mathscr{C}_{k})$ is constructed by using $f(\omega^{\infty})$
and rotated boxes $f(\Lambda_{\epsilon/2}(z))$).  We deduce from
this, by letting $k\rightarrow\infty$, that
$\mathscr{C}^p$ converges in distribution to
$f(\mathscr{C})$ with respect to $\Dist$, as $p\uparrow p_c$.  Since we have proved that $\mathscr{C}^p$ also
converges in distribution to $\mathscr{C}$ with respect to
$\Dist$ as $p\uparrow p_c$, it follows that $f(\mathscr{C})$ and $\mathscr{C}$ have the same distribution.
\end{proof}
\begin{remark}\label{portion}
Consider portions of clusters in a region, that is, the connected components in the region which come from the clusters intersecting the region but not completely contained in it.  In Section 3.3 of the preprint version \cite{Yao21} of the present paper,
we constructed the scaling limit of the collection of portions of clusters in a strip as $p\uparrow p_c$, which was used to prove Theorem \ref{t2} in that preprint.

For critical site percolation, the scaling limits in the loop-ensemble and quad-crossing spaces are
equivalent in the sense that the associated $\sigma$-algebras are the same; see Section 2.3 of \cite{GPS13} for a proof that the loops
determine the quad-crossing information, and see Theorem 6.10 in \cite{HS} for the converse result.  To our knowledge, similar equivalence result has not been proved for near-critical percolation.  Moreover, this equivalence result for critical percolation implies that the scaling limit of the collection of
portions of clusters in Theorem 12 of \cite{CCK19} is not only a measurable function of the pair of quad-crossing and loop-ensemble
scaling limits, but also measurable with respect to the single quad-crossing scaling limit.
\end{remark}

\section{FPP based on the scaling limit of near-critical percolation}\label{fpp}

In this section, we define a ``continuum'' FPP on the continuum cluster ensemble $\mathscr{C}$.
We also define its discrete analog for $\mathscr{C}^{p}$ on the lattice $\eta\mathbb{T}$.  Then
we show that as $p\uparrow p_c$, some quantities of the discrete FPP
approach their corresponding quantities of the continuum FPP.
Finally, we give a law of large numbers for the ``point-to-point''
passage times of the continuum FPP.

\subsection{Definition of first-passage times}\label{DefineFPP}
We first define first-passage times for $\mathscr{C}$.  Then we
define analogous quantities for $\mathscr{C}^{p}$ with $p\in(p(1),p_c)$.

A \textbf{(continuum) chain} is a finite sequence $\Gamma=(\mathcal
{C}_0,\mathcal {C}_1,\ldots,\mathcal {C}_k)$ of distinct clusters in
$\mathscr{C}$, such that $\mathcal {C}_{j-1}\cap\mathcal
{C}_j\neq\emptyset$ for all $j\in\{1,2,\ldots,k\}$.  Let $|\Gamma|$
denote the length, or, more precisely, the number of clusters in
$\Gamma$.  The \textbf{passage time} of $\Gamma$ is defined by
$T(\Gamma):=|\Gamma|-1$.   Similarly to the third item in Theorem 2
of \cite{CN06} (which was called ``finite chaining'' in Proposition
2.7 in \cite{GPS13}) for the full-plane CLE$_6$, $\mathscr{C}$ also has the
finite chaining property: Almost surely (under $\mathbf{P}^{\infty}$),
for any pair of clusters $\mathcal {C},\mathcal {C}'\in
\mathscr{C}$, there is a chain $(\mathcal {C},\mathcal
{C}_1,\ldots,\mathcal {C}_k,\mathcal {C}')$ in $\mathscr{C}$
connecting $\mathcal {C}$ and $\mathcal {C}'$.  It is not hard to
show this property, but we will not give its proof here since we
will not use it in our proofs.

The \textbf{first-passage time} between any pair of
clusters $\mathcal {C},\mathcal {C}'\in \mathscr{C}$ is defined by
\begin{equation*}
T(\mathcal {C},\mathcal {C}'):=\inf \{T(\Gamma):\Gamma\mbox{ is a
chain in $\mathscr{C}$ that starts at $\mathcal {C}$ and ends at
$\mathcal {C}'$}\},
\end{equation*}
where we use the convention that the infimum of the empty set is
$\infty$.  It follows from the finite chaining property that the
first-passage times between all pairs of clusters in $\mathscr{C}$
are almost surely finite.
\begin{remark}
If we view each cluster of $\mathscr{C}$ as a single vertex, and
define a new edge between any pair of vertices if their
corresponding clusters touch each other, then we get a graph called
``(continuum) cluster graph''.  Observe that a chain corresponds to a
(self-avoiding) path of the cluster graph, and the first-passage
time between two clusters is just the graph distance between their
corresponding vertices in the cluster graph.  The finite chaining
property implies that the cluster graph is almost surely connected.
\end{remark}

\begin{figure}
\begin{center}
\includegraphics[height=0.35\textwidth]{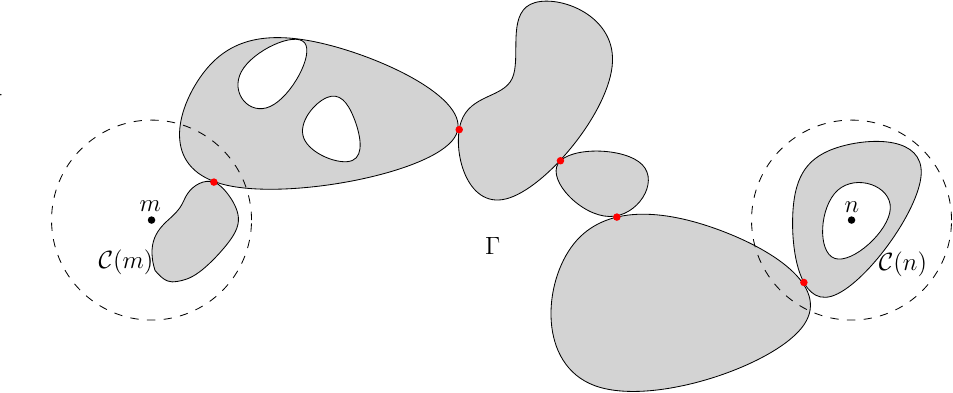}
\caption{A chain $\Gamma$ from $\mathcal
{C}(m)$ to $\mathcal
{C}(n)$ such that $T_{m,n}^{\theta=0}=T(\Gamma)=5$.}\label{chain}
\end{center}
\end{figure}

Lemma \ref{l13} below states that, for any fixed point $z\in
\mathbb{C}$, there exists almost surely a unique cluster whose Euclidean diameter is maximal
among the clusters of $\mathscr{C}$ in $\mathbb{D}_{1/2}(z)$, which is
called the \textbf{largest (continuum) cluster in $\mathbb{D}_{1/2}(z)$} and denoted by $\mathcal {C}(z)$; if there is no such cluster, then let $\mathcal
{C}(z)=\emptyset$.  Now we can define the point-to-point passage times
for $\mathscr{C}$: For $m,n\in\mathbb{Z}$ and
$\theta\in[0,2\pi]$, let
\begin{equation*}
T_{m,n}^{\theta}=T_{m,n}^{\theta}(\mathscr{C}):=T(\mathcal
{C}(me^{i\theta}),\mathcal {C}(ne^{i\theta})),
\end{equation*}
where we let $T_{m,n}^{\theta}=\infty$ if $\mathcal
{C}(me^{i\theta})$ or $\mathcal {C}(ne^{i\theta})$ is empty.  See Figure \ref{chain}.  When $\theta=0$, we write $T_{m,n}:=T_{m,n}^{0}$.  We remark that there are other ways to define
point-to-point passage times for $\mathscr{C}$.  The key point is
that for each $z\in\mathbb{C}$ one should choose an appropriate
cluster close to $z$.  Different choice may lead to different
technical arguments, but the strategy for the main result is
essentially the same.

To study $T_{m,n}^{\theta}$, we need to define its discrete version.
Suppose that $z\in\mathbb{C}$ and $p\in(p(1),p_c)$.  In the following the sites in $\eta\mathbb{T}$ are viewed as hexagons.
We let $\mathcal {C}^p(z)$ denote the cluster whose Euclidean diameter is maximal
among the clusters of $\mathscr{C}^p$ in $\mathbb{D}_{1/2}(z)$, and call it the \textbf{largest (discrete) cluster in $\mathbb{D}_{1/2}(z)$}.
When there are more than one candidates for $\mathcal {C}^p(z)$, then we choose one of them in some deterministic way; when there are no clusters of $\mathscr{C}^p$ in $\mathbb{D}_{1/2}(z)$, then we let $\mathcal {C}^p(z)$ denote the $\eta$-hexagon containing $z$ (if $z$ is on the boundary of two or three hexagons, then we choose one of them in some deterministic way).  In fact, the arguments in Section \ref{largest} imply that with high probability the above two events do not occur for $p$ close to $p_c$.  For $m,n\in\mathbb{Z}$ and $\theta\in[0,2\pi]$, let
\begin{equation*}
T_{m,n}^{p,\theta}=T_{m,n}^{p,\theta}(\mathscr{C}^p):=T(\mathcal
{C}^p(me^{i\theta}),\mathcal {C}^p(ne^{i\theta})).
\end{equation*}
When $\theta=0$, we simply drop it from the notation, writing $T_{m,n}^p:=T_{m,n}^{p,0}$.

Similarly to its continuum version above, a \textbf{(discrete)
chain} is a finite sequence $(\mathcal {C}_0,\mathcal
{C}_1,\ldots,\mathcal {C}_k)$ of distinct clusters in
$\mathscr{C}^p$, such that for each $j\in\{1,2,\ldots,k\}$, there is
a yellow $\eta$-hexagon touching both of $\mathcal {C}_{j-1}$ and
$\mathcal {C}_j$.

\subsection{Properties of large clusters}\label{largest}
This subsection gives some properties of clusters with large diameters in region.  In particular, Lemma
\ref{l13} allows us to define $\mathcal {C}(z)$, and Lemma \ref{l37} says that with high probability $\mathcal {C}(z)$ is well-approximated by $\mathcal {C}^p(z)$ for $p$ close to $p_c$, under a coupling.  As mentioned in Section \ref{DefineFPP}, one could use a different large cluster ``close'' to $z$ instead of $\mathcal {C}(z)$ used here to study the point-to-point passage times for $\mathscr{C}$.  For example, one may choose the outermost cluster surrounding $z$ in $\mathbb{D}_{1/2}(z)$, as we did in \cite{Yao21}, or the cluster having the largest ``volume'' in $\mathbb{D}_{1/2}(z)$.  However, the former choice leads to a lot of technical issues, while the latter involves counting measures associated to continuum clusters (see \cite{CCK19}), which are rather complicated.
The present definition of $\mathcal {C}(z)$ is more natural and makes the proofs cleaner.

For a set $S\subset\mathbb{C}$, let $\diam(S)$ denote the Euclidean diameter of $S$.  For convenience, we will just say diameter to mean Euclidean diameter in the rest of the paper.  Recall that the $L^{\infty}$-diameter $\diam_{\infty}(\cdot)$ is used in Section \ref{continuum}.

The following proposition will be used in the proofs of the next two lemmas.  Roughly speaking, it says that, for $p$ close $p_c$, with high probability the gaps between the diameters of clusters with large diameters are not too small.  Note that Proposition 3.2 in \cite{BC13} says that with high probability, the gaps between the volumes of critical clusters with large diameters are also not too small; similar result also holds for the near-critical case.
\begin{proposition}\label{p3}
For all $\delta,\epsilon_1\in(0,1)$ there exists $\epsilon_2=\epsilon_2(\delta,\epsilon_1)>0$ such that for all $p\in(p(5/\epsilon_2),p_c)$,
\begin{equation*}
\mathbf{P}_p^{\eta}[\mbox{there exist distinct $\mathcal{C},\mathcal{C}'\in\mathscr{C}^p(\delta)|_{\Lambda_1}$ such that $|\diam(\mathcal{C})-\diam(\mathcal{C}')|\leq\epsilon_2$}]\leq\epsilon_1.
\end{equation*}
\end{proposition}
\begin{proof}
The proof is based on upper bounds for arm probabilities and a counting argument.  Let $\mathcal{A}_{\sigma}^{\theta,\vartheta}(z;r,R)$ denote the wedge arm event for $\omega_p^{\eta}$ defined at the beginning of Section \ref{pre}.  Write $\mathcal{A}_{(0),(101)}^{\theta,\vartheta}(z;r,R):=\mathcal{A}_{(0)}(z;r,R)\circ\mathcal{A}_{(101)}^{\theta,\vartheta}(z;r,R)$, where $\circ$ means the disjoint occurrence (see e.g. Chapter 2 of \cite{Gri99}).  By (\ref{e78}) and (\ref{e23}),  we know that there exist constants $C_0>0,\vartheta_0\in(\pi,3\pi/2)$ and $0<3\beta<\lambda_1<1/2$, such that for all $p\in(p(1),p_c)$, $\eta\leq r<R\leq 1$ and $\theta\in[0,2\pi]$,
\begin{equation}\label{e30}
\mathbf{P}_p^{\eta}[\mathcal{A}_{(0)}(r,R)]\leq C_0\left(\frac{r}{R}\right)^{\lambda_1}\quad\mbox{and}\quad\mathbf{P}_p^{\eta}[\mathcal{A}_{(101)}^{\theta,\vartheta_0}(r,R)]\leq C_0\left(\frac{r}{R}\right)^{2-\beta}.
\end{equation}
Letting $\lambda:=\lambda_1-\beta$, the above two inequalities combined with Reimer's inequality implies that
\begin{equation}\label{e31}
\mathbf{P}_p^{\eta}[\mathcal{A}_{(0),(101)}^{\theta,\vartheta_0}(z;r,R)]\leq C_0^2\left(\frac{r}{R}\right)^{2+\lambda}.
\end{equation}

Let $\delta\in(0,1)$ and $\epsilon\in(0,\delta/50)$.  Given a closed subset $S$ of $\Lambda_1$ with $\diam(S)\geq\delta$, we choose a pair $\{u(S),v(S)\}$ of vertices in $\epsilon\mathbb{Z}^2$ such that $\Lambda_{\epsilon/2}(u(S))$ and $\Lambda_{\epsilon/2}(v(S))$ contain two points of $S$ whose distance is equal to the diameter of $S$.  If there are more than one candidates for this pair, we choose one of them in some deterministic way.  Write $\tilde{u}(u,v):=u+\epsilon\cdot(u-v)/|u-v|$ and $\tilde{v}(u,v):=v+\epsilon\cdot(v-u)/|v-u|$.  It is easy to check that there exists $\epsilon_0\in(0,\delta/50)$, such that for all $\epsilon\in(0,\epsilon_0)$ and all closed subset $S$ of $\Lambda_1$ with $\diam(S)\geq\delta$, the set $S+\mathbb{D}_{\epsilon/5}$ is contained in the intersection of the wedges $W(\tilde{u};\arg(v-u),\vartheta_0)$ and $W(\tilde{v};\arg(u-v),\vartheta_0)$.

In the rest of the proof, we suppose that $\delta\in(0,1),\epsilon\in(0,\epsilon_0)$ and $p\in(p(5/\epsilon),p_c)$.  Assume that $\mathcal{C},\mathcal{C}'$ are distinct clusters of $\mathscr{C}^p(\delta)$ in $\Lambda_1$ with $|\diam(\mathcal{C})-\diam(\mathcal{C}')|\leq\epsilon$.  For simplicity, we let $u,u'$ denote $u(\mathcal{C}),u(\mathcal{C}')$, respectively, and let $\tilde{u}:=\tilde{u}(u,v),\tilde{u}':=\tilde{u}(u',v')$.  We assume without loss of generality that ${|u-u'|\leq\min\{|u-v'|,|v-v'|,|v-u'|\}}$.  By considering the distance between $u$ and $u'$, we distinguish the following three cases; in each case the clusters $\mathcal {C},\mathcal {C}'$ together with their yellow site boundaries produces the corresponding independent arm events.  See Figure \ref{diameter} for an illustration.
\begin{figure}
\begin{center}
\includegraphics[height=0.45\textwidth]{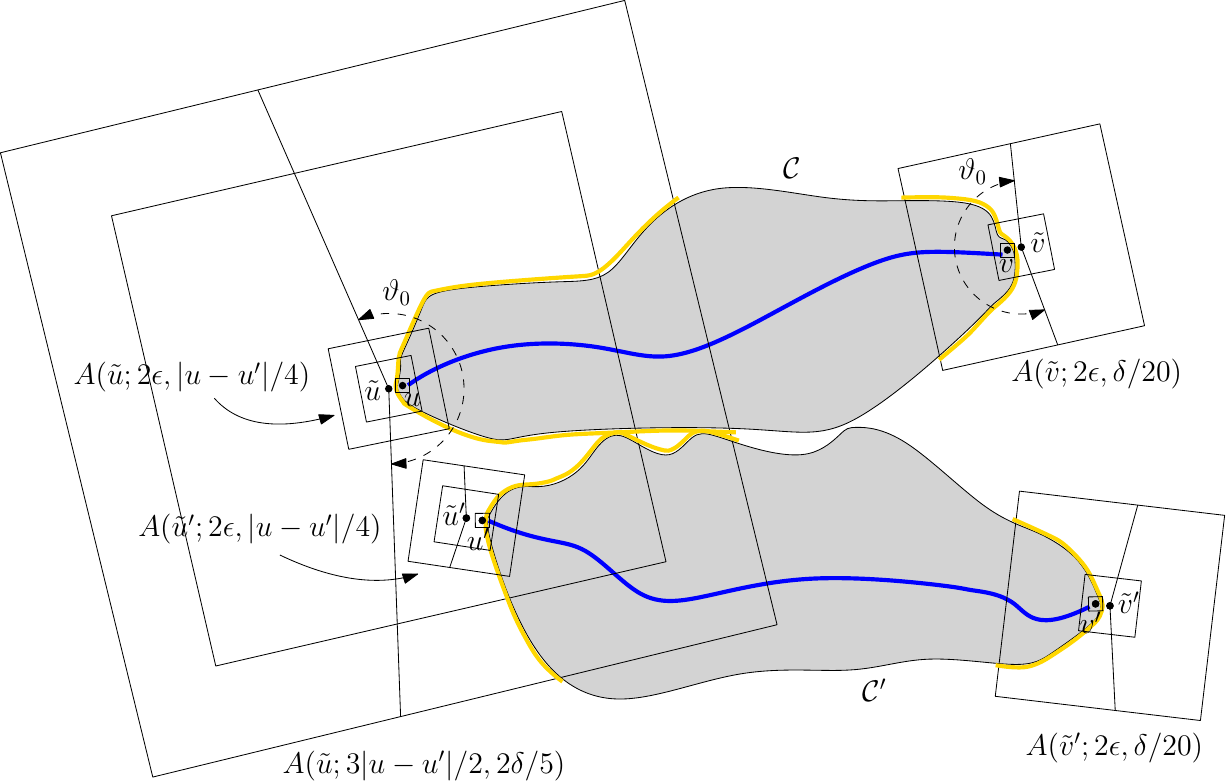}
\caption{Illustration of arm events when $|u-u'|\in[10\epsilon,\delta/5]$ and $|v-v'|>\delta/5$.  The clusters $\mathcal {C},\mathcal {C}'$ and their site boundaries provide five independent arm events $\mathcal{A}_{(101)}^{\arg(v-u),\vartheta_0}(\tilde{u};2\epsilon,|u-u'|/4)$, $\mathcal{A}_{(101)}^{\arg(v'-u'),\vartheta_0}(\tilde{u}';2\epsilon,|u-u'|/4)$, $\mathcal{A}_{(0),(101)}^{\arg(v-u),\vartheta_0}(\tilde{u};3|u-u'|/2,2\delta/5)$,
$\mathcal{A}_{(101)}^{\arg(u-v),\vartheta_0}(\tilde{v};2\epsilon,\delta/20)$ and $\mathcal{A}_{(101)}^{\arg(u'-v'),\vartheta_0}(\tilde{v}';2\epsilon,\delta/20)$.
}\label{diameter}
\end{center}
\end{figure}
\begin{itemize}
  \item If $|u-u'|\in[10\epsilon,\delta/5]$, then the events $\mathcal{A}_{(101)}^{\arg(v-u),\vartheta_0}(\tilde{u};2\epsilon,|u-u'|/4)$,
  $\mathcal{A}_{(101)}^{\arg(v'-u'),\vartheta_0}(\tilde{u}';2\epsilon,|u-u'|/4)$ and ${\mathcal{A}_{(0),(101)}^{\arg(v-u),\vartheta_0}(\tilde{u};3|u-u'|/2,2\delta/5)}$ occur.
  \item If $|u-u'|>\delta/5$, then $\mathcal{A}_{(101)}^{\arg(v-u),\vartheta_0}(\tilde{u};2\epsilon,\delta/20)$ and $\mathcal{A}_{(101)}^{\arg(v'-u'),\vartheta_0}(\tilde{u}';2\epsilon,\delta/20)$ occur.
  \item If $|u-u'|\in[0,10\epsilon)$, then $\mathcal{A}_{(0),(101)}^{\arg(v-u),\vartheta_0}(\tilde{u};12\epsilon,2\delta/5)$ occurs.
\end{itemize}
Similar statements also hold for $v$ and $v'$.  It is easy to check that the arm events introduced in each case are independent; see, for example, the five arm events shown in Figure \ref{diameter} are independent.

Note that $\diam(\mathcal{C})\in[\delta,2\sqrt{2}]$.  Now, assume that $\diam(\mathcal{C})\in[\delta_1,\delta_1+\epsilon]$ for any fixed $\delta_1\in[\delta,2\sqrt{2}]$.  Then, there are $O(\epsilon^{-3})$ choices for the pair $\{u,v\}$.  Fix $u$ and $v$.  Then there are $O(1)$ choices for $u'$ satisfying $|u-u'|\in[0,10\epsilon)$, $O(\epsilon^{-2})$ choices for $u'$ satisfying $|u-u'|>\delta/5$, and $O(4^j)$ choices for $u'$ satisfying $|u-u'|\in[10\epsilon,\delta/5]$ and $u'\in A(u;2^j\epsilon,2^{j+1}\epsilon)$ with $j\geq 3$.  Now fix $u'$ also.  Then there are $O(1)$ choices for $v'$ satisfying $|v-v'|\in[0,10\epsilon)$, $O(\epsilon^{-1})$ choices for $v'$ satisfying $|v-v'|>\delta/5$, and $O(2^k)$ choices for $v'$ satisfying $|v-v'|\in[10\epsilon,\delta/5]$ and $v'\in A(v;2^k\epsilon,2^{k+1}\epsilon)$ with $k\geq 3$.  The above arguments imply that there exist $C_1,C_2>0$ such that for all
$\delta\in(0,1),\delta_1\in[\delta,2\sqrt{2}],\epsilon\in(0,\epsilon_0)$ and $p\in(p(5/\epsilon),p_c)$,
\begin{align}
&\mathbf{P}_p^{\eta}[\mbox{there exist distinct $\mathcal{C},\mathcal{C}'\in\mathscr{C}^p(\delta)|_{\Lambda_1}$ such that $\diam(\mathcal{C})\in[\delta_1,\delta_1+\epsilon]$ and $|\diam(\mathcal{C})-\diam(\mathcal{C}')|\leq\epsilon$}]\nonumber\\
&\leq\frac{C_1}{\epsilon^3}
\left(\left(\underbrace{\left(\frac{\epsilon}{\delta}\right)^{2+\lambda}}_{(1)}+\underbrace{\sum_{j=3}^{\lceil\log_2(\delta/\epsilon)\rceil}4^j\left(\frac{\epsilon}{2^j\epsilon}\right)^{4-2\beta}\left(\frac{2^j\epsilon}{\delta}\right)^{2+\lambda}}_{(2)}\right)
\left(\underbrace{\frac{1}{\epsilon}\left(\frac{\epsilon}{\delta}\right)^{4-2\beta}}_{(3)}
+\underbrace{\sum_{k=3}^{\lceil\log_2(\delta/\epsilon)\rceil}2^k\left(\frac{\epsilon}{2^k\epsilon}\right)^{4-2\beta}\left(\frac{2^k\epsilon}{\delta}\right)^{2+\lambda}}_{(4)}\right)\right.\nonumber\\
&\left.\qquad\qquad+\underbrace{\left(\frac{\epsilon}{\delta}\right)^{4+2\lambda}}_{(5)}+\underbrace{\frac{1}{\epsilon^3}\left(\frac{\epsilon}{\delta}\right)^{8-4\beta}}_{(6)}\right)\quad\mbox{by (\ref{e30}) and (\ref{e31})}\nonumber\\
&\leq C_2\epsilon^{1+\lambda-2\beta}/\delta^8,\label{e32}
\end{align}
where the six terms (1)--(6) correspond respectively to the following six cases: (1) $|u-u'|\in[0,10\epsilon)$, (2) $|u-u'|\in[10\epsilon,\delta/5]$, (3) $|v-v'|>\delta/5$, (4) $|v-v'|\in[10\epsilon,\delta/5]$, (5) $|u-u'|,|v-v'|\in[0,10\epsilon)$, (6) $|u-u'|,|v-v'|>\delta/5$.  Hence,
\begin{align*}
&\mathbf{P}_p^{\eta}[\mbox{there exist distinct $\mathcal{C},\mathcal{C}'\in\mathscr{C}^p(\delta)|_{\Lambda_1}$ such that $|\diam(\mathcal{C})-\diam(\mathcal{C}')|\leq\epsilon$}]\\
&\leq\sum_{j=0}^{\lfloor(2\sqrt{2}-\delta)/\epsilon\rfloor}\mathbf{P}_p^{\eta}[\mbox{$\exists$ distinct $\mathcal{C},\mathcal{C}'\in\mathscr{C}^p(\delta)|_{\Lambda_1}$ s.t. $\diam(\mathcal{C})\in[\delta+j\epsilon,\delta+(j+1)\epsilon]$ and $|\diam(\mathcal{C})-\diam(\mathcal{C}')|\leq\epsilon$}]\\
&\leq 4C_2\epsilon^{\lambda-2\beta}/\delta^8\quad\mbox{by (\ref{e32})}.
\end{align*}
The lemma follows easily from this since $\lambda>2\beta$.
\end{proof}

\begin{lemma}\label{l13}
Let $z\in\mathbb{C}$.  Almost surely, there is a unique cluster whose diameter is maximal
among the clusters of $\mathscr{C}$ in $\mathbb{D}_{1/2}(z)$, which is
called the \textbf{largest (continuum) cluster in $\mathbb{D}_{1/2}(z)$} and denoted by $\mathcal {C}(z)$.
Moreover, for each $\epsilon>0$, there exists $\delta=\delta(\epsilon)>0$
such that
\begin{equation*}
\mathbf{P}^{\infty}[\diam(\mathcal{C}(z))\geq\delta]\geq 1-\epsilon.
\end{equation*}
\end{lemma}
\begin{proof}
We shall prove the lemma when $z=0$; the general case then follows since the law of $\mathscr{C}$ is invariant under translations by Theorem \ref{t3}.  Let $\mathbf{P}$ be a coupling such that $\omega_p^{\eta}\rightarrow\omega^{\infty}$ in
$(\mathscr{H},d_{\mathscr{H}})$ a.s. as $p\uparrow p_c$.  Fix any $\delta\in(0,1)$.  For each $\epsilon_1\in(0,1)$, there exist $\epsilon_2>0,p_1\in(0,p_c)$ such that
\begin{align}
&\mathbf{P}[\mbox{there exist distinct $\mathcal{C},\mathcal{C}'\in\mathscr{C}(\delta)|_{\Lambda_1}$ such that $|\diam(\mathcal{C})-\diam(\mathcal{C}')|\leq\epsilon_2$}]\nonumber\\
&\leq\mathbf{P}\left[\mbox{$\exists$ distinct $\mathcal{C},\mathcal{C}'\in\mathscr{C}^{p_1}(\delta)|_{\Lambda_1}$ s.t. $|\diam(\mathcal{C})-\diam(\mathcal{C}')|\leq 3\epsilon_2$}\right]+\mathbf{P}\left[\widehat{\dist}(\mathscr{C}(\delta)|_{\Lambda_1},\mathscr{C}^{p_1}(\delta)|_{\Lambda_1})\geq\epsilon_2/4\right]\nonumber\\
&\leq\epsilon_1\quad\mbox{by Proposition \ref{p3} and Theorem \ref{t3}.}\label{e21}
\end{align}
Letting $\epsilon_1\rightarrow 0$ gives that
\begin{equation}\label{e22}
\mathbf{P}[\mbox{there exist distinct $\mathcal{C},\mathcal{C}'\in\mathscr{C}(\delta)|_{\Lambda_1}$ such that $|\diam(\mathcal{C})-\diam(\mathcal{C}')|=0$}]=0.
\end{equation}
Therefore, for each $\epsilon>0$ there exist $\delta\in(0,1/8)$ and $p_2\in(0,p_c)$ such that
\begin{align*}
&\mathbf{P}[\mbox{$\mathscr{C}(\delta)|_{\mathbb{D}_{1/2}}\neq\emptyset$ and $\exists$ a unique cluster whose diameter is maximal among the clusters of $\mathscr{C}(\delta)|_{\mathbb{D}_{1/2}}$}]\\
&=\mathbf{P}[\mathscr{C}(\delta)|_{\mathbb{D}_{1/2}}\neq\emptyset]\quad\mbox{by (\ref{e22})}\\
&\geq\mathbf{P}[\mathscr{C}^{p_2}(2\delta)|_{\mathbb{D}_{1/4}}\neq\emptyset]-\mathbf{P}[\dist(\mathscr{C}|_{\Lambda_1},\mathscr{C}^{p_2}|_{\Lambda_1})\geq\delta/2]\\
&\geq 1-\epsilon\quad\mbox{by RSW, FKG and Theorem \ref{t3}.}
\end{align*}
The lemma follows from this immediately.
\end{proof}

\begin{lemma}\label{l37}
Let $\mathbf{P}$ be a coupling such that $\omega_p^{\eta}\rightarrow\omega^{\infty}$ in
$(\mathscr{H},d_{\mathscr{H}})$ a.s. as $p\uparrow p_c$.  For each $k\in\mathbb{N}$ and $\epsilon,\delta>0$, there exists $p_0\in(0,p_c)$,
such that for all $p\in(p_0,p_c)$ and $z\in\Lambda_k$,
\begin{equation*}
\mathbf{P}[d_H(\mathcal {C}^p(z),\mathcal {C}(z))\geq
\delta]\leq\epsilon.
\end{equation*}
\end{lemma}
\begin{proof}
Let $0<\delta_1<\delta/8<1/10$, $p\in(p(5/\delta_1),p_c)$, $k\in\mathbb{N}$ and $z\in\Lambda_k$.  Define the events
\begin{align*}
&\mathcal {E}_1=\mathcal
{E}_1(\delta_1,p,k):=\{\dist(\mathscr{C}|_{\Lambda_{k+1}},\mathscr{C}^p|_{\Lambda_{k+1}})<\delta_1\},\\
&\mathcal {E}_2=\mathcal {E}_2(z;\delta,\delta_1,p):=\{\diam(\mathcal
{C}^p(z))\geq\delta\mbox{ and }\mathscr{C}^p(\delta/2)|_{\mathbb{D}_{1/2+2\delta_1}(z)}=\mathscr{C}^p(\delta/2)|_{\mathbb{D}_{1/2-2\delta_1}(z)}\},\\
&\mathcal {E}_3=\mathcal {E}_3(z;\delta,\delta_1):=\{\mbox{for any distinct $\mathcal{C},\mathcal{C}'\in\mathscr{C}(\delta/2)|_{\Lambda_1(z)}, |\diam(\mathcal{C})-\diam(\mathcal{C}')|\geq 9\delta_1$}\}.
\end{align*}

First, we prove that $\mathcal {E}_1\cap\mathcal {E}_2\cap\mathcal
{E}_3\subset\{d_H(\mathcal {C}^p(z),\mathcal {C}(z))<\delta_1\}$.  Assume that the event $\mathcal {E}_1\cap\mathcal {E}_2\cap\mathcal
{E}_3$ holds.  Since $\mathcal {E}_1\cap\mathcal {E}_2$ occurs, there is a continuum cluster $\mathcal{C}_0$ such that $d_H(\mathcal {C}^p(z),\mathcal {C}_0)<
\delta_1$, and furthermore $\mathcal{C}^p(z)\subset\mathbb{D}_{1/2-2\delta_1}(z)$, ${\mathcal{C}_0\subset\mathbb{D}_{1/2}(z)}$ and ${\diam(\mathcal{C}_0)>\diam(\mathcal{C}^p(z))-4\delta_1>\delta/2}$.  It remains to check that $\mathcal{C}_0=\mathcal{C}(z)$.  Assume for a
contradiction that ${\mathcal{C}_0\neq\mathcal{C}(z)}$.  Since $\mathcal {E}_3$ occurs, we have $\diam(\mathcal {C}(z))\geq\diam(\mathcal {C}_0)+9\delta_1>\diam(\mathcal{C}^p(z))+5\delta_1$.  Since $\mathcal {E}_1$ holds, there is a cluster $\mathcal{C}_0^p$ of $\mathscr{C}^p$ such that $d_H(\mathcal {C}_0^p,\mathcal {C}(z))<\delta_1$, $\mathcal {C}_0^p\subset\mathbb{D}_{1/2+2\delta_1}(z)$ and $\diam(\mathcal {C}_0^p)>\diam(\mathcal {C}(z))-4\delta_1>\diam(\mathcal{C}^p(z))+\delta_1$.  Furthermore, the fact that $\mathcal {E}_2$ holds implies that $\mathcal {C}_0^p\subset\mathbb{D}_{1/2}(z)$.  This together with $\mathcal {C}_0^p>\diam(\mathcal{C}^p(z))$ contradicts the definition of $\mathcal{C}^p(z)$.

Therefore, for all $k\in\mathbb{N},\epsilon>0$ and $\delta\in(0,1/2)$, there exist $\delta_1\in(0,\delta/8)$ and $p_0\in(0,p_c)$ such that for all $p\in(p_0,p_c)$ and $z\in\Lambda_k$,
\begin{align*}
\mathbf{P}[d_H(\mathcal {C}^p(z),\mathcal {C}(z))<\delta]&\geq\mathbf{P}[d_H(\mathcal {C}^p(z),\mathcal {C}(z))<\delta_1]\\
&\geq 1-\mathbf{P}[\neg\mathcal {E}_1]-\mathbf{P}[\neg\mathcal {E}_2]-\mathbf{P}[\neg\mathcal {E}_3]\quad\mbox{by $\mathcal {E}_1\cap\mathcal {E}_2\cap\mathcal
{E}_3\subset\{d_H(\mathcal {C}^p(z),\mathcal {C}(z))<\delta_1\}$}\\
&\geq 1-\epsilon/2-\mathbf{P}[\neg\mathcal {E}_2]\quad\mbox{by Theorem \ref{t3} and (\ref{e21})}\\
&\geq 1-\epsilon\quad\mbox{by RSW, FKG and (\ref{e72}), via a standard argument}.
\end{align*}
This completes the proof of the lemma.
\end{proof}

\subsection{Properties of first-passage times}
\begin{lemma}\label{l7}
There are constants $C_1,C_2>0$, such that for each
$n\in\mathbb{Z}$, $k\in\mathbb{N}$, $\theta\in[0,2\pi]$ and $p\in(p(10),p_c)$,
\begin{equation}\label{e11}
\mathbf{P}_p^{\eta}[T_{n,n+1}^{p,\theta}\geq k]\leq C_1\exp(-C_2k).
\end{equation}
Moreover, there is $C_0>0$ such that for each $\theta\in[0,2\pi]$, $p\in(p(10),p_c)$ and $m,n\in\mathbb{Z}$ with $m<n$,
\begin{equation}\label{e48}
\mathbf{E}_p^{\eta}[T_{m,n}^{p,\theta}]\leq C_0(n-m).
\end{equation}
\end{lemma}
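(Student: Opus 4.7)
The lemma has two parts, and I will prove (\ref{e11}) first, then deduce (\ref{e48}) from it by subadditivity.

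For (\ref{e11}), by the approximate rotation and translation invariance of the discrete model (up to lattice-spacing corrections, which do not affect the tail bounds), I reduce to $n=0$, $\theta=0$. The plan is to dominate $T_{0,1}^{p,0}(1)$ by a line-to-line passage time across a bounded rectangle plus an $O(1)$ correction, then invoke Lemma \ref{l24} for the uniform exponential tail. Concretely, I take $R = [-1,2] \times [-1/2, 1/2]$, which contains both disks $\mathbb{D}_{1/2}(0)$ and $\mathbb{D}_{1/2}(1)$ and hence both clusters $\mathcal{C}^p(0), \mathcal{C}^p(1)$. By Lemma \ref{l24}, the left-right passage time $l := l_{3,1}^{p,0}((-1,-1/2))$ of $R$ satisfies $\mathbf{P}_p^{\eta(p)}[l \geq x] \leq C_1' e^{-C_2' x}$ for $x \geq 3K$, uniformly in $p \in (p(10), p_c)$. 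I will then take an optimal left-right crossing $\gamma_0$ of $R$ and modify it into a path from $\mathcal{C}^p(0)$ to $\mathcal{C}^p(1)$ in $\Lambda_{\infty,1}^{\eta(p)}$: since $\widehat{\mathcal{C}^p}(z)$ surrounds $z$ within $\mathbb{D}_{1/2}(z) \subset R$, the crossing either intersects this surrounding structure (in which case I truncate at the first crossing, paying at most one extra yellow site for the optional yellow hexagon in $\widehat{\mathcal{C}^p}(z) \setminus \mathcal{C}^p(z)$) or one attaches a short correction path whose passage time is bounded by an auxiliary line-to-line crossing of a thin rectangle around $z$, again controlled by Lemma \ref{l24}. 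Performing this modification at both endpoints yields $T_{0,1}^{p,0}(1) \leq l + O(1)$, and absorbing the additive shift (together with the trivial small-$k$ range) into the constants gives (\ref{e11}).

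For (\ref{e48}), the first two inequalities are immediate from pointwise monotonicity of the infimum passage time in the set of admissible paths: enlarging the strip from $\Lambda_{\infty,1}^{\theta,\eta(p)}$ to $\Lambda_{\infty,h}^{\theta,\eta(p)}$ for $h \geq 1$, and further to the whole plane, only enlarges the candidate-path set, so $T_{m,n}^{p,\theta} \leq T_{m,n}^{p,\theta}(h) \leq T_{m,n}^{p,\theta}(1)$. For the last inequality I use subadditivity: $T_{m,n}^{p,\theta}(1) \leq \sum_{j=m}^{n-1} T_{j,j+1}^{p,\theta}(1)$, obtained by concatenating optimal strip-constrained paths realizing each $T_{j,j+1}^{p,\theta}(1)$ and bridging consecutive paths through a zero-cost transit within the connected blue cluster $\mathcal{C}^p((j+1)e^{i\theta}) \subset \mathbb{D}_{1/2}((j+1)e^{i\theta}) \subset \Lambda_{\infty,1}^{\theta,\eta(p)}$. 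Taking expectations and using (\ref{e11}) in the form
\[
\mathbf{E}_p^{\eta(p)}[T_{j,j+1}^{p,\theta}(1)] = \sum_{k \geq 1}\mathbf{P}_p^{\eta(p)}[T_{j,j+1}^{p,\theta}(1) \geq k] \leq \sum_{k \geq 1} C_1 e^{-C_2 k} < \infty
\]
gives $\mathbf{E}_p^{\eta(p)}[T_{m,n}^{p,\theta}(1)] \leq C(n-m)$, completing (\ref{e48}).

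The hard part will be the path-modification argument in (\ref{e11}): the surrounding structure $\widehat{\mathcal{C}^p}(z)$ may lie anywhere inside $\mathbb{D}_{1/2}(z)$, so the optimal left-right crossing $\gamma_0$ of $R$ need not intersect $\widehat{\mathcal{C}^p}(z)$ directly and may instead pass around the loop within $R$. Constructing the correction paths explicitly and bounding their passage times by fixed-geometry line-to-line crossings whose exponential tails are uniform in $p \in (p(10), p_c)$ is the main technical point; the uniformity of the bound in Lemma \ref{l24} (independent of $p$ for rectangles with fixed $w,h$) is exactly what makes this step go through.
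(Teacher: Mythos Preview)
Your argument for (\ref{e48}) is correct and matches the paper: pointwise monotonicity gives the first two inequalities, and subadditivity plus the exponential tail from (\ref{e11}) gives the last one.

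The gap is in your proof of (\ref{e11}). You claim that $T_{0,1}^{p,0}(1) \leq l + O(1)$ where $l$ is a fixed-rectangle line-to-line time, with the $O(1)$ coming from ``correction paths'' attached at the endpoints. This does not work: the cluster $\mathcal{C}^p(z)$ (and hence $\widehat{\mathcal{C}^p}(z)$) can sit at an arbitrarily small scale around $z$, down to $\eta(p)$. A left-right crossing of your rectangle $R$, and equally any auxiliary crossing of a \emph{fixed} ``thin rectangle around $z$'', can simply pass around a small $\widehat{\mathcal{C}^p}(z)$ without meeting it. Connecting such a crossing to $\mathcal{C}^p(z)$ then forces you to traverse all yellow layers between the macroscopic crossing and the small cluster, and Lemma~\ref{l24} gives no control on this: its threshold $Kw/h$ blows up as the rectangle is made thin enough to force an intersection, so the ``uniformity in $p$'' you cite does not save the step.

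The paper resolves this with a genuinely multi-scale argument. First it isolates the bad event $\mathcal{E}_n^p(C_0k)=\{\mathcal{C}^p(n)\cap\mathbb{D}_{2^{-C_0k}}(n)\neq\emptyset\}\cup\{\mathcal{C}^p(n+1)\cap\mathbb{D}_{2^{-C_0k}}(n+1)\neq\emptyset\}$, which has probability $\leq C_3 e^{-C_4 C_0 k}$ by RSW and FKG. On the complement, $\widehat{\mathcal{C}^p}(n)$ surrounds the small disk $\mathbb{D}_{2^{-C_0k}}(n)$, so $T_{n,n+1}^p(1)$ is dominated by the disk-to-disk passage time between $\mathbb{D}_{2^{-C_0k}}(n)$ and $\mathbb{D}_{2^{-C_0k}}(n+1)$. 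This disk-to-disk time is then bounded via (\ref{e86}) by two \emph{annulus-sector} passage times $X^{p,\theta}(z;2^{-C_0k},\sqrt{2}/2)$ plus a fixed connector $Z^p(n)$; the sector times are controlled by Lemma~\ref{l10}, whose exponential tail kicks in exactly at threshold $K\log_2(r_2/r_1)\asymp C_0 k$. The annulus-sector estimate is the missing ingredient in your approach: it is what bridges the random small scale of $\mathcal{C}^p(z)$ to the macroscopic scale, and it cannot be replaced by fixed-rectangle crossings.
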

\begin{proof}
For simplicity, we prove Lemma \ref{l7} in the case
$\theta=0$. The proof extends immediately to the general case.

Suppose that $p\in(p(10),p_c)$.  Assume that $\mathcal {C}^p(n)$ and $\mathcal {C}^p(n+1)$ intersect boxes $\Lambda_r(u)$ and $\Lambda_r(v)$ for some $u,v\in 2r\mathbb{Z}^2$, respectively.  If $r\in [\eta,1/6]$ and $\diam(\mathcal {C}^p(n)),\diam(\mathcal {C}^p(n+1))\geq 6r$, then $\mathcal {C}^p(n)$ and $\mathcal {C}^p(n+1)$ produce 1-arm events $\mathcal{A}_{(0)}(u;r,3r)$ and
$\mathcal{A}_{(0)}(v;r,3r)$, respectively, and furthermore,
\begin{equation}\label{e86}
T_{n,n+1}^p\leq X^p(u;r,4)+X^p(v;r,4)+Y^p(n,u,v;r),
\end{equation}
where $X^p(z;r_1,r_2)$ is the annulus-crossing time defined above Lemma \ref{l10}, $Y^p(n,u,v;r):=Y^p(u;r)+Y^p(v;r)+Y^p(n)$ and
\begin{align*}
&Y^p(z;r):=\inf\{T(\gamma)(\omega_p^{\eta}):\gamma\mbox{ is a
circuit surrounding $z$ in $A(z;r,3r)$}\},\\
&Y^p(n):=\inf\{T(\gamma)(\omega_p^{\eta}):\mbox{$\gamma$ is a circuit surrounding $n+1/2$ in $A(n+1/2;1,2)$}\}.
\end{align*}
When $r=\eta$, it is easy to check that the following inequality always holds, although it could happen that ${\diam(\mathcal {C}^p(n))<6\eta}$, or $\mathcal {C}^p(n)$ is even a yellow $\eta$-hexagon.
\begin{equation}\label{e33}
T_{n,n+1}^p\leq X^p(u;\eta,4)+X^p(v;\eta,4)+Y^p(n,u,v;\eta)+6.
\end{equation}

It follows easily from Lemma \ref{l24} that there exist $C_3,C_4>0$, such
that for all ${p\in(p(10),p_c)}, {n\in\mathbb{Z}}, {k\in\mathbb{N}}, {r\in [\eta,1/6]}$ and ${u,v\in 2r\mathbb{Z}^2}$,
\begin{equation}\label{e87}
\mathbf{P}_p^{\eta}[Y^p(n,u,v;r)\geq k]\leq C_3\exp(-C_4k).
\end{equation}

For $x\geq 1$ and $n\in\mathbb{Z}$, define a set of pairs of points by
\begin{equation*}
S(n;x):=\{(u,v):u,v\in 2^{-x+1}\mathbb{Z}^2,\mbox{$\Lambda_{2^{-x}}(u)$ intersects $\mathbb{D}_{1/2}(n)$ and $\Lambda_{2^{-x}}(v)$ intersects $\mathbb{D}_{1/2}(n+1)$}\}.
\end{equation*}
It is clear that there is an absolute constant $C_5>0$ such that
\begin{equation}\label{e34}
\# S(n;x)\leq\exp(C_5x).
\end{equation}
Let $K,K_2$ be the constants from Lemma \ref{l10}.  Write
\begin{equation*}
C_6:=\min\{C_4/3,K_2/3\}\quad\mbox{and}\quad C:=\min\{1/(4K),C_6/(2C_5)\}.
\end{equation*}
Then, there is $C_7>0$ such that for all $p\in(p(10),p_c), n\in\mathbb{Z}, k\in\mathbb{N}, u,v\in 2^{-Ck+1}\mathbb{Z}^2$ and ${r^*=r^*(p,k):=\max\{2^{-Ck},\eta\}}$,
\begin{align}
&\mathbf{P}_p^{\eta}\left[X^p(u;r^*,4)+X^p(v;r^*,4)+Y^p(n,u,v;r^*)+6\geq
k\right]\nonumber\\
&\leq\mathbf{P}_p^{\eta}\left[X^p(u;r^*,4)\geq
k/3-2\right]+\mathbf{P}_p^{\eta}\left[X^p(v;r^*,4)\geq
k/3-2\right]+\mathbf{P}_p^{\eta}\left[Y^p(n,u,v;r^*)\geq k/3-2\right]\nonumber\\
&\leq C_7\exp(-C_6k)\quad\mbox{by Lemma \ref{l10} and (\ref{e87})}.\label{e91}
\end{align}

By a standard RSW and FKG argument, there exist
$C_8,C_9>0$, such that for all $n\in\mathbb{Z}$, $p\in(p(10),p_c)$, $x\geq 1$ with $2^{-x}\geq\eta$,
\begin{equation}\label{e13}
\mathbf{P}_p^{\eta}[\diam(\mathcal {C}^p(n))<6\cdot 2^{-x}\mbox{ or }\diam(\mathcal
{C}^p(n+1))<6\cdot 2^{-x}]\leq C_8\exp(-C_9x).
\end{equation}

The above arguments imply that, for $k$ satisfying $k\geq 3/C$ and $2^{-Ck}\geq\eta$,
\begin{align}
&\mathbf{P}_p^{\eta}[T_{n,n+1}^p\geq k]\nonumber\\
&\leq\mathbf{P}_p^{\eta}[\diam(\mathcal {C}^p(n))<6\cdot 2^{-Ck}\mbox{ or }\diam(\mathcal
{C}^p(n+1))<6\cdot 2^{-Ck}]\nonumber\\
&\quad+\mathbf{P}_p^{\eta}[\diam(\mathcal {C}^p(n)),\diam(\mathcal
{C}^p(n+1))\geq 6\cdot 2^{-Ck}\mbox{ and }T_{n,n+1}^p\geq k]\nonumber\\
&\leq C_8\exp(-C_9Ck)+\sum_{(u,v)\in S(n;Ck)}\mathbf{P}_p^{\eta}[X^p(u;r^*,4)+X^p(v;r^*,4)+Y^p(n,u,v;r^*)\geq k]\quad\mbox{by (\ref{e13}) and (\ref{e86})}\nonumber\\
&\leq C_8\exp(-C_9Ck)+\exp(C_5Ck)\cdot C_7\exp(-C_6k)\quad\mbox{by (\ref{e34}) and (\ref{e91})},\label{e15}
\end{align}
and for $k$ satisfying $2^{-Ck}\leq\eta$,
\begin{align}
\mathbf{P}_p^{\eta}[T_{n,n+1}^p\geq k]&\leq\sum_{(u,v)\in S(n;\log_2(1/\eta))}\mathbf{P}_p^{\eta}[X^p(u;\eta,4)+X^p(v;\eta,4)+Y^p(n,u,v;\eta)+6\geq k]\quad\mbox{by (\ref{e33})}\nonumber\\
&\leq\exp(C_5\log_2(1/\eta))\cdot C_7\exp(-C_6k)\quad\mbox{by (\ref{e34}) and (\ref{e91})}\nonumber\\
&\leq\exp(C_5Ck)\cdot C_7\exp(-C_6k).\label{e99}
\end{align}
Then (\ref{e11}) follows from (\ref{e15}) and (\ref{e99}) immediately since $C_5C<C_6$.  We get
(\ref{e48}) from (\ref{e11}) since $T_{m,n}^p\leq\sum_{j=m}^{n-1}T_{j,j+1}^p$.
\end{proof}

To study geometric properties of geodesics between clusters of $\mathscr{C}^p$,  for $z\in\mathbb{C}$, $0<4r\leq R\leq 1$ and $p\in(p(4/r),p_c)$ we define the event
\begin{equation}\label{e93}
\widetilde{\mathcal {A}}_6(z;r,R):=\mathcal
{A}_6(z;r,R-2\eta)\cup\mathcal
{A}_{(111111)}(z;r+2\eta,R)\cup\left\{
\begin{aligned}
&\mbox{there exists $x\in[r+2\eta,R-\eta]$ such that}\\
&\mbox{$\mathcal {A}_6(z;r,x-\eta)\cap\mathcal
{A}_{(111111)}(z;x,R)$ occurs}
\end{aligned}
\right\}.
\end{equation}
Roughly speaking, the above event occurs when there is a geodesic $\gamma$ between two large blue clusters
with two yellow sites of $\gamma$ ``very'' close to each other.  We shall use the event in the proof of Lemma \ref{l6} below; see Figure \ref{yellowcircuit1} for an illustration.
\begin{figure}
\begin{center}
\includegraphics[height=0.3\textwidth]{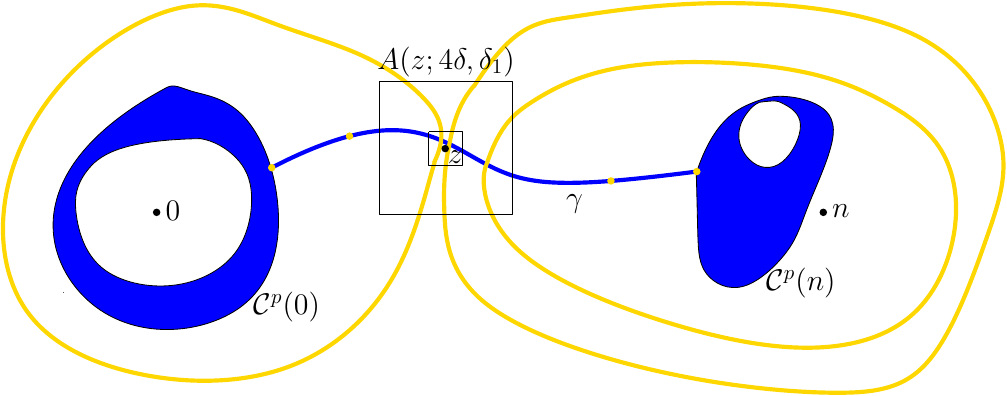}
\caption{Let $0<20\delta<\delta_1<1/4$ and $p\in(p(10/\delta),p_c)$.  Assume that $\diam(\mathcal {C}^p(0)),\diam(\mathcal {C}^p(n))\geq 4\delta_1$ and there is a geodesic $\gamma$ from $\mathcal {C}^p(0)$ to $\mathcal
{C}^p(n)$ with two yellow sites of $\gamma$ contained in $\Lambda_{3\delta}(z)$.  Then the event $\widetilde{\mathcal {A}}_6(z;4\delta,\delta_1)$ occurs.}\label{yellowcircuit1}
\end{center}
\end{figure}
\begin{lemma}\label{l8}
Let $\lambda_6>0$ be the universal constant from (\ref{e67}).  There
is a constant $C>0$ such that for all $z\in\mathbb{C}$,
$0<4r\leq R\leq 1$ and $p\in(p(4/r),p_c)$, we have
\begin{equation*}
\mathbf{P}_p^{\eta}\left[\widetilde{\mathcal
{A}}_6(z;r,R)\right]\leq C\left(\frac{r}{R}\right)^{2+\lambda_6}.
\end{equation*}
\end{lemma}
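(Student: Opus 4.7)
The plan is to treat the three events comprising $\widetilde{\mathcal{A}}_6(z;r,R)$ separately. For the first event $\mathcal{A}_6(z;r,R-2\eta(p))$, note that $p\in(p(4/r),p_c)$ gives $\eta(p)\leq r/4$, and $R\geq 4r\geq 16\eta(p)$, so $R-2\eta(p)\geq R/2$; then (\ref{e67}) gives a bound of $C(r/(R/2))^{2+\lambda_6}=C'(r/R)^{2+\lambda_6}$. For the second event $\mathcal{A}_{(111111)}(z;r+2\eta(p),R)$, note $r+2\eta(p)\leq 3r/2$, and combining (\ref{e92}) with (\ref{e67}) yields a bound of $C(r/R)^{2+\lambda_6+\epsilon}$ for some $\epsilon>0$, which is comfortably below $C(r/R)^{2+\lambda_6}$.

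The bulk of the work is for the third event, which is a union over a continuum of scales $x\in[r+2\eta(p),R-\eta(p)]$. I would dyadically discretize: let $k_0,k_1\in\mathbb{Z}$ be such that $[r+2\eta(p),R/2]\subset[2^{k_0},2^{k_1+1}]$, and for each $k\in\{k_0,\dots,k_1\}$ define
\begin{equation*}
E_k:=\mathcal{A}_6(z;r,2^k-\eta(p))\cap\mathcal{A}_{(111111)}(z;2^{k+1},R).
\end{equation*}
Since $\mathcal{A}_6(z;r,\cdot)$ is monotone non-increasing in its outer radius and $\mathcal{A}_{(111111)}(z;\cdot,R)$ is monotone non-decreasing in its inner radius, for any admissible $x\in[2^k,2^{k+1}]$ the event $\mathcal{A}_6(z;r,x-\eta(p))\cap\mathcal{A}_{(111111)}(z;x,R)$ is contained in $E_k$. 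Hence the third event is contained in $\bigcup_{k=k_0}^{k_1}E_k$.

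The crucial point is that the two arm events in $E_k$ are supported on the disjoint annuli $A(z;r,2^k-\eta(p))$ and $A(z;2^{k+1},R)$, separated by a corridor of Euclidean width at least $\eta(p)$; no hexagon of $\eta(p)\mathbb{H}$ intersects both annuli, so the events depend on disjoint sets of sites and are therefore independent. Using (\ref{e67}) on the polychromatic 6-arm and (\ref{e92}) combined with (\ref{e67}) on the monochromatic 6-arm, together with $2^k-\eta(p)\geq 2^k/2$, we obtain
\begin{equation*}
\mathbf{P}_p^{\eta(p)}[E_k]\leq C\left(\frac{r}{2^k}\right)^{2+\lambda_6}\left(\frac{2^{k+1}}{R}\right)^{2+\lambda_6+\epsilon}\leq C'\,\frac{r^{2+\lambda_6}}{R^{2+\lambda_6+\epsilon}}\,2^{k\epsilon}.
\end{equation*}

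Summing the geometric series $\sum_{k=k_0}^{k_1}2^{k\epsilon}\leq C''R^{\epsilon}$ and combining with the bounds for the first two events yields the claim $\mathbf{P}_p^{\eta(p)}[\widetilde{\mathcal{A}}_6(z;r,R)]\leq C(r/R)^{2+\lambda_6}$. The only delicate point is checking that the dyadic endpoints remain in the valid range (which forces the mild condition $p\in(p(4/r),p_c)$ so that $r\geq 4\eta(p)$ and corridor widths are positive); the independence across disjoint annuli and the extra $\epsilon$-gain from (\ref{e92}), which absorbs the $\log(R/r)$ factor from summation, carry the argument. I do not anticipate any serious obstacle beyond the careful bookkeeping of the corner cases.
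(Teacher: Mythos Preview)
Your proposal is correct and follows essentially the same route as the paper: a dyadic decomposition of the scale $x$, independence of the two arm events across the resulting disjoint annuli, and the extra $\epsilon$-gain from (\ref{e92}) on the monochromatic 6-arm to make the geometric sum converge. The paper's only cosmetic difference is that it singles out the endpoint ranges $x\le 2r$ and $x\ge 2^{J-1}r$ explicitly (absorbing them into a single $\mathcal{A}_{(111111)}$ or $\mathcal{A}_6$ bound), whereas your write-up leaves the tail $x\in(2^{k_1+1},R-\eta(p)]$ to ``careful bookkeeping''; since on that range the event is contained in $\mathcal{A}_6(z;r,R/2-\eta(p))$, which already has the required bound, this is indeed just bookkeeping.
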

\begin{proof}
Write $J:=\lfloor\log_2(R/r)\rfloor$.  Assume that there exists
$x\in[r+2\eta,R-\eta]$ such that ${\mathcal
{A}_6(z;r,x-\eta)}\cap{\mathcal {A}_{(111111)}(z;x,R)}$ holds, and
$x$ is the smallest one satisfying this event.  Observe that if
$x\leq 2r$, then $\mathcal {A}_{(111111)}(z;2r,R)$ occurs; if $x\geq
2^{J-1}r$, then $\mathcal {A}_6(z;r,2^{J-1}r-\eta)$ occurs; if
$2^{j-1}r<x\leq 2^jr$ for $j\in\{2,\ldots,J-1\}$, then $\mathcal
{A}_6(z;r,2^{j-1}r-\eta)\cap\mathcal {A}_{(111111)}(z;2^jr,R)$
occurs.  Therefore, there are constants
$\epsilon,C,C_1>0$ such that for all $z\in\mathbb{C}$, $0<4r\leq
R\leq 1$ and $p\in(p(4/r),p_c)$,
\begin{align*}
&\mathbf{P}_p^{\eta}\left[\widetilde{\mathcal
{A}}_6(z;r,R)\right]\\
&\leq\mathbf{P}_p^{\eta}\left[\mathcal
{A}_{(111111)}(z;2r,R)\right]+\mathbf{P}_p^{\eta}\left[\mathcal {A}_6(z;r,2^{J-1}r-\eta)\right]+\sum_{j=2}^{J-1}\mathbf{P}_p^{\eta}\left[\mathcal
{A}_6(z;r,2^{j-1}r-\eta)\right]\mathbf{P}_p^{\eta}\left[\mathcal {A}_{(111111)}(z;2^jr,R)\right]\\
&\leq C_1\left(\frac{r}{R}\right)^{2+\lambda_6}+\sum_{j=2}^{J-1}C_1\left(\frac{r}{R}\right)^{2+\lambda_6}\left(\frac{2^jr}{R}\right)^{\epsilon}
\quad\mbox{by (\ref{e67}) and (\ref{e92})}\\
&\leq C\left(\frac{r}{R}\right)^{2+\lambda_6}.
\end{align*}
\end{proof}

For $n\in\mathbb{N}$, $p\in(p(1),p_c)$, $\delta\in(0,1/2)$,
$\theta\in[0,2\pi]$ and $K\geq 2$, we define the event
\begin{equation*}
\mathcal {F}_n^{p,\theta}(\delta,K):=\left\{
\begin{aligned}
&\exists\mbox{ a chain $\Gamma$ from $\mathcal {C}^p(0)$ to
$\mathcal {C}^p(ne^{i\theta})$ such that $T_{0,n}^{p,\theta}=|\Gamma|-1$},\\
&\mbox{$\diam(\mathcal {C}^p)\geq\delta$ for all $\mathcal
{C}^p\in\Gamma$, and $\Gamma\subset\Lambda_{Kn}$}
\end{aligned}
\right\}.
\end{equation*}
Write $\mathcal {F}_n^p(\delta,K):=\mathcal {F}_n^{p,0}(\delta,K)$.  Observe that for our FPP on $\eta\mathbb{T}$, if there is a geodesic between two large blue clusters with two yellow sites in this geodesic close to each other, then the blue cluster between these two yellow sites may be small and not be seen in the scaling limit.  This creates a difference between the first-passage times in the discrete and continuum cases, since the continuum FPP is defined by chains of clusters.  Nevertheless, thanks to the following lemma, for $p$ close to $p_c$ with high probability the first-passage time between two large blue clusters is realized by a discrete chain of large clusters.  The proof relies on the estimate of the ``6-arm'' event in the previous lemma.
\begin{lemma}\label{l6}
For each $R\geq 1$ and $\epsilon>0$, there exist $K=K(\epsilon)\geq 3$
(independent of  $R$), $\delta=\delta(\epsilon,R)\in(0,1/4)$, such
that for all $\theta\in[0,2\pi]$, $p\in(p(10/\delta),p_c)$ and $n\in\mathbb{N}$ with $n\leq R$,
\begin{equation*}
\mathbf{P}_p^{\eta}[\mathcal{F}_n^{p,\theta}(\delta,K)]\geq
1-\epsilon.
\end{equation*}
\end{lemma}
\begin{proof}
For simplicity, we prove Lemma \ref{l6} in the case $\theta=0$.  The proof extends immediately to the general case.

Let $C_0$ be the constant from (\ref{e48}).  We use (\ref{e48}) and
Markov's inequality to obtain that, for all $x>0$, $n\in\mathbb{N}$
and $p\in(p(10),p_c)$,
\begin{equation}\label{e17}
\mathbf{P}_p^{\eta}[T_{0,n}^p\geq xC_0n]\leq \frac{1}{x}.
\end{equation}

We claim that there are constants $\epsilon_0,C_1,C_2>0$ such that
for each $p\in(p(10),p_c)$, $r\geq 1$ and the event
\begin{equation*}
\mathcal{G}(r):=\{\exists\mbox{ a path $\gamma$ of
$\omega_p^{\eta}$ starting at a hexagon in $\mathbb{D}_{1/2}$, such
that $T(\gamma)\leq\epsilon_0r$ and }\gamma\not\subset\Lambda_r\},
\end{equation*}
we have
\begin{equation}\label{e18}
\mathbf{P}_p^{\eta}[\mathcal{G}(r)]\leq C_1\exp(-C_2r).
\end{equation}
We use a standard renormalization argument to show (\ref{e18}).  First,
we define a family of random variables $\{X_z^p: z\in
\mathbb{Z}^2\}$ as follows.  We declare a vertex $z\in\mathbb{Z}^2$
to be \textbf{good} if there is a yellow circuit (viewed as a union of yellow hexagons) of
$\omega_p^{\eta}$ surrounding the point $Mz$ in $A(Mz;M/2,M)$, where $M$ is a large
constant that will be fixed later.  The family $X^p=\{X_z^p: z\in
\mathbb{Z}^2\}$ is defined by
\begin{equation*}
X_z^p:=\left\{
\begin{aligned}
&1 &\mbox{ if $z$ is good},\\
&0 &\mbox{ otherwise}.
\end{aligned}
\right.
\end{equation*}
Note that $X^p$ is 2-dependent.  It follows by (\ref{e40}) that
\begin{equation}\label{e3}
\lim_{M\rightarrow\infty}\mathbf{P}_p^{\eta}[z \mbox{ is good}]=1\quad\mbox{uniformly in $p\in(p(10),p_c)$ and $z\in \mathbb{Z}^2$.}
\end{equation}
Then, by Theorem \ref{t8}, there is a $p_0\in(0,p_c^{site}(\mathbb{Z}^2))$ and a fixed constant $M\geq 1$, such that for
each $p\in(p(10),p_c)$, $X^p$ stochastically dominates the subcritical
Bernoulli site FPP on $\mathbb{Z}^2$ with measure
$\mathbf{P}_{\mathbb{Z}^2,p_0}^{site}$. (Recall that $\mathbf{P}_{\mathbb{Z}^2,p_0}^{site}[t(z)=0]=p_0=1-\mathbf{P}_{\mathbb{Z}^2,p_0}^{site}[t(z)=1]$ for $z\in \mathbb{Z}^2$.)

Assume that the event $\mathcal{G}(r)$ holds.  Then it is easy to
see that there exists a path $\widetilde{\gamma}$ of $\mathbb{Z}^2$
starting at the origin, such that $|\widetilde{\gamma}|\geq r/M$ and
$\Lambda_{M/2}(Mz)\cap\gamma\neq\emptyset$ for all
$z\in\widetilde{\gamma}$.  Note that when a vertex
$z\in\widetilde{\gamma}$ is good, then $\gamma$ passes through a
yellow hexagon of $\omega_p^{\eta}$ in $A(Mz;M/2,M)$.  Therefore,
\begin{equation*}
T(\gamma)\geq(\mbox{the number of good vertices in }
\widetilde{\gamma})/9.
\end{equation*}
The above argument implies that there are constants
$\epsilon_0,C_1,C_2>0$ such that for each $p\in(p(10),p_c)$ and $r\geq 1$,
\begin{align*}
\mathbf{P}_p^{\eta}[\mathcal{G}(r)]&\leq\mathbf{P}_p^{\eta}\left[
\begin{aligned}
&\mbox{$\exists$ a path $\widetilde{\gamma}$ of
$\mathbb{Z}^2$ starting at the origin, such that $|\widetilde{\gamma}|\geq r/M$}\\
&\mbox{and the number of good vertices in $\widetilde{\gamma}$ is
smaller than $10\epsilon_0r$}
\end{aligned}
\right]\\
&\leq\mathbf{P}_{\mathbb{Z}^2,p_0}^{site}\left[
\begin{aligned}
&\mbox{$\exists$ a path $\widetilde{\gamma}$ of
$\mathbb{Z}^2$ starting at the origin, such that $|\widetilde{\gamma}|\geq r/M$}\\
&\mbox{and the number of 1-valued vertices in $\widetilde{\gamma}$
is smaller than $10\epsilon_0r$}
\end{aligned}
\right]\\
&\leq C_1\exp(-C_2r)\quad\mbox{by Proposition \ref{p1}},
\end{align*}
which gives the claim (\ref{e18}).

Let $\mathcal{O}(r)$ denote the event that there exists a yellow
circuit of $\omega_p^{\eta}$ surrounding the origin in
$A(r/2,r)$.  Then for each $\epsilon\in(0,1)$ there is
$K\geq 3$ such that for all $n\in\mathbb{N}$, $p\in(p(10),p_c)$ and the event
\begin{align*}
\mathcal{H}_n(K):=\left\{
\begin{aligned}
&\mbox{for each geodesic $\gamma$ from $\mathcal {C}^p(0)$ to
$\mathcal {C}^p(n)$,
$\gamma\subset\Lambda_{Kn/2}$ and}\\
&\mbox{each blue cluster which has a site of $\gamma$ is contained
in $\Lambda_{Kn}$}
\end{aligned}
\right\},
\end{align*}
we have
\begin{align}
\mathbf{P}_p^{\eta}[\mathcal{H}_n(K)]&\geq
\mathbf{P}_p^{\eta}[\{T_{0,n}^p\leq\epsilon_0Kn/2\}\cap
\mathcal{G}(Kn/2)\cap\mathcal{O}(Kn)]\nonumber\\
&\geq 1-\epsilon/3\quad\mbox{by (\ref{e17}), (\ref{e18}) and
(\ref{e40})}.\label{e20}
\end{align}

It follows from a RSW and FKG argument that, for each
$\epsilon\in(0,1)$ there is $\delta_1\in(0,1/4)$ such that for
each $p\in(p(10/\delta_1),p_c)$, $n\in\mathbb{N}$ and
the event
\begin{equation*}
\mathcal{E}_n(\delta_1):=\{\diam(\mathcal {C}^p(0)),\diam(\mathcal {C}^p(n))\geq 4\delta_1\},
\end{equation*}
we have
\begin{equation}\label{e19}
\mathbf{P}_p^{\eta}[\mathcal{E}_n(\delta_1)]\geq 1-\epsilon/3.
\end{equation}
For $\delta\in(0,\delta_1/20)$ and $p\in(p(10/\delta),p_c)$,  write
\begin{equation*}
\mathcal{S}_n(\delta):=\left\{
\begin{aligned}
&\mbox{for each geodesic $\gamma$ from $\mathcal {C}^p(0)$ to $\mathcal {C}^p(n)$, the $L^{\infty}$ distance}\\
&\mbox{between any pair of yellow sites in $\gamma$ is larger than
$2\delta$}
\end{aligned}
\right\}.
\end{equation*}
Note that $\mathcal{F}_n^p(\delta,K)\supset\mathcal{H}_n(K)\cap
\mathcal{E}_n(\delta_1)\cap \mathcal{S}_n(\delta)$.  It remains to
bound the probability of the event $\mathcal{H}_n(K)\cap
\mathcal{E}_n(\delta_1)\cap \neg\mathcal{S}_n(\delta)$.  Assume that
this event holds.  Then there exist two yellow sites $v_1,v_2$ in a
geodesic $\gamma$ from $\mathcal {C}^p(0)$ to $\mathcal
{C}^p(n)$, such that $\|v_1-v_2\|_{\infty}\leq 2\delta$ and
$v_1,v_2\in\Lambda_{Kn/2}$.  It is clear that there is $z\in
2\delta\mathbb{Z}^2\cap\Lambda_{Kn/2}$ such that
$v_1,v_2\in\Lambda_{3\delta}(z)$.  By (\ref{i2}) of Proposition \ref{p4}, the following statements hold: If there
exist no yellow sites of $\gamma$ in $\Lambda_{\delta_1}(z)$ except
$v_1,v_2$, then the event $\mathcal {A}_6(z;4\delta,\delta_1-\eta)$ occurs.  Otherwise, there exists a yellow site
$v\in\gamma\backslash\{v_1,v_2\}$ which is contained in
$\Lambda_{\delta_1}(z)$ with minimal $L^{\infty}$ distance to $z$.
Write $L:=\|z-v\|_{\infty}$.  If $L<4\delta+2\eta$, then
$\mathcal {A}_{(111111)}(z;4\delta+2\eta,\delta_1)$ occurs; if
$L>\delta_1-\eta$, then $\mathcal
{A}_6(z;4\delta,\delta_1-2\eta)$ occurs; if
$L\in[4\delta+2\eta,\delta_1-\eta]$, then $\mathcal
{A}_6(z;4\delta,L-\eta)\cap\mathcal {A}_{(111111)}(z;L,\delta_1)$
occurs.  The above argument implies that the event
$\widetilde{\mathcal {A}}_6(z;4\delta,\delta_1)$ defined by
(\ref{e93}) occurs; see Figure \ref{yellowcircuit1}.  Therefore, there exist universal constants
$\lambda_6,C_3>0$, such that for each $R\geq 1$ and $\epsilon>0$, there
exist $\delta_1=\delta_1(\epsilon)\in(0,1/4)$ (independent of $R$),
$K=K(\epsilon)\geq 3$ (independent of $R$) and $\delta=\delta(\epsilon,R)\in(0,\delta_1/20)$, such that for all
$p\in(p(10/\delta),p_c)$ and $n\in\mathbb{N}$ with
$n\leq R$,
\begin{align*}
\mathbf{P}_p^{\eta}[\mathcal{H}_n(K)\cap
\mathcal{E}_n(\delta_1)\cap
\neg\mathcal{S}_n(\delta)]&\leq\sum_{z\in
2\delta\mathbb{Z}^2\cap\Lambda_{Kn/2}}\mathbf{P}_p^{\eta}\left[\widetilde{\mathcal
{A}}_6(z;4\delta,\delta_1)\right]\\
&\leq C_3\left(\frac{KR}{\delta}\right)^2\left(\frac{\delta}{\delta_1}\right)^{2+\lambda_6}\quad\mbox{by Lemma \ref{l8}}\\
&\leq\epsilon/3.
\end{align*}
Combining this with (\ref{e20}) and (\ref{e19}), we obtain
\begin{equation*}
\mathbf{P}_p^{\eta}[\mathcal{F}_n^p(\delta,K)]\geq\mathbf{P}_p^{\eta}[\mathcal{H}_n(K)\cap
\mathcal{E}_n(\delta_1)\cap \mathcal{S}_n(\delta)]\geq 1-\epsilon.
\end{equation*}
\end{proof}

The next lemma says that for fixed $n,\theta$ and each $p<p_c$ sufficiently close to $p_c$, with high probability $T_{0,n}^{\theta}$ equals $T^{p,\theta}_{0,n}$.
\begin{lemma}\label{l29}
Let $\mathbf{P}$ be a coupling such that
$\omega_p^{\eta}\rightarrow\omega^{\infty}$ in
$(\mathscr{H},d_{\mathscr{H}})$ a.s. as $p\uparrow p_c$.  For each
$R\geq 1$ and $\epsilon>0$, there exists $p_0=p_0(\epsilon,R)\in(0,p_c)$ such that for all $p\in(p_0,p_c)$,
$\theta\in[0,2\pi]$ and $n\in \mathbb{N}$ with $n\leq R$, we have
\begin{equation*}
\mathbf{P}[T_{0,n}^{\theta}=T_{0,n}^{p,\theta}]\geq 1-\epsilon.
\end{equation*}
\end{lemma}
\begin{proof}
We start with an upper bound on the
probability of the event that there exist two large blue clusters
in a box such that they are close to each other but do
not form a chain.  For $S,S'\subset\mathbb{C}$, let ${d_{\infty}(S,S'):=\inf\{\|z-z'\|_{\infty}:z\in S,z'\in S'\}}$ denote the $L^{\infty}$-distance between $S$ and $S'$.  There exist $C_1,\lambda_6>0$
such that for all $0<5r_1\leq r_2\leq 1$, $k\in\mathbb{N}$ and
$p\in(p(5/r_1),p_c)$,
\begin{align}
&\mathbf{P}\left[\mbox{$\exists$ $\mathcal {C}_1^p,\mathcal
{C}_2^p\in\mathscr{C}^p(r_2)$ in $\Lambda_k$ such that
$0<d_{\infty}(\mathcal {C}_1^p,\mathcal {C}_2^p)\leq r_1$ and
$(\mathcal {C}_1^p,\mathcal {C}_2^p)$ is not a chain}\right]\nonumber\\
&\leq\mathbf{P}_p^{\eta}[\mathcal
{A}_6(z;r_1,r_2/4)\mbox{ occurs for some point $z\in\Lambda_k$}]\nonumber\\
&\leq
C_1\left(\frac{k}{r_1}\right)^2\left(\frac{r_1}{r_2}\right)^{2+\lambda_6}\mbox{
by (\ref{e67})}.\label{e47}
\end{align}
For the continuum configuration the probability of the
corresponding event is bounded as follows:  Since for any fixed $r_2>0$ and $k\in\mathbb{N}$, there are a.s. finitely many clusters of $\mathscr{C}(r_2)$ in $\Lambda_k$ (by Theorem \ref{t3}), we know that for any fixed $r_2>0$ and $k\in\mathbb{N}$,
\begin{equation}\label{e77}
\mathbf{P}[\exists\mbox{ $\mathcal {C}_1,\mathcal
{C}_2\in\mathscr{C}(r_2)$ in $\Lambda_k$ such that
$0<d_{\infty}(\mathcal {C}_1,\mathcal {C}_2)\leq r_1$}]\rightarrow 0\quad\mbox{as $r_1\rightarrow 0$}.
\end{equation}

Let $\mathcal {F}_n^{p,\theta}(\delta,K)$ be the event defined above
Lemma \ref{l6}.  Write
\begin{equation*}
\mathcal {E}_n^{p,\theta}(\delta,K):=\left\{
\begin{aligned}
&\mbox{$\exists$ a chain $\Gamma^p$ from $\mathcal
{C}^p(0)$ to $\mathcal {C}^p(ne^{i\theta})$, such that $T_{0,n}^{p,\theta}=|\Gamma^p|-1$},\\
&\mbox{$\diam(\mathcal {C}^p)\geq\delta$ for all $\mathcal
{C}^p\in\Gamma^p$, and $\Gamma^p\subset\Lambda_{Kn}$;}\\
&\mbox{$\exists$ a chain $\Gamma$ from $\mathcal {C}(0)$ to
$\mathcal {C}(ne^{i\theta})$, such that
$|\Gamma|=|\Gamma^p|$},\\
&\mbox{$\diam(\mathcal {C})\geq\delta/2$ for all $\mathcal
{C}\in\Gamma$, and $\Gamma\subset\Lambda_{Kn}$}
\end{aligned}
\right\}
\end{equation*}
It is clear that $\mathcal
{E}_n^{p,\theta}(\delta,K)\subset\{T_{0,n}^{\theta}\leq
T_{0,n}^{p,\theta}\}$.  Therefore, for each $R\geq 1$ and $\epsilon>0$,
there exist $K=K(\epsilon)\geq 3$ with $K\in\mathbb{N}$,
$\delta=\delta(\epsilon,R)\in(0,1/4)$,
$\delta_1=\delta_1(\epsilon,R)\in(0,\delta/100)$ and
$p_1=p_1(\epsilon,R)\in(0,p_c)$, such that for all $p\in(p_1,p_c)$,
$\theta\in[0,2\pi]$ and $n\in\mathbb{N}$ with $n\leq R$, we have
\begin{align}
\mathbf{P}[T_{0,n}^{\theta}\leq
T_{0,n}^{p,\theta}]&\geq\mathbf{P}[\mathcal {E}_n^{p,\theta}(\delta,K)]\nonumber\\
&\geq\mathbf{P}[\mathcal
{F}_n^{p,\theta}(\delta,K)]-\mathbf{P}[d_H(\mathcal
{C}^p(0),\mathcal {C}(0))\geq\delta_1]-\mathbf{P}[d_H(\mathcal
{C}^p(ne^{i\theta}),\mathcal {C}(ne^{i\theta}))\geq\delta_1]\nonumber\\
&\quad{}-\mathbf{P}[\dist(\mathscr{C}^p|_{\Lambda_{Kn}},\mathscr{C}|_{\Lambda_{Kn}})\geq\delta_1]\nonumber\\
&\quad{}-\mathbf{P}[\exists\mbox{ $\mathcal {C}_1,\mathcal
{C}_2\in\mathscr{C}(\delta/4)$ in $\Lambda_{KR}$ such that
$0<d_{\infty}(\mathcal {C}_1,\mathcal {C}_2)\leq 3\delta_1$}]\nonumber\\
&\geq 1-\epsilon/2\quad\mbox{by (\ref{e77}), Theorem \ref{t3},
Lemmas \ref{l6} and \ref{l37}},\label{e42}
\end{align}
where the second inequality is due to the observation:
\begin{align*}
\mathcal {E}_n^{p,\theta}(\delta,K)\supset &\mathcal
{F}_n^{p,\theta}(\delta,K)\cap\{d_H(\mathcal {C}^p(0),\mathcal
{C}(0))\leq\delta_1\}\cap\{d_H(\mathcal
{C}^p(ne^{i\theta}),\mathcal {C}(ne^{i\theta}))\leq\delta_1\}\\
&\cap\{\dist(\mathscr{C}^p|_{\Lambda_{Kn}},\mathscr{C}|_{\Lambda_{Kn}})\leq\delta_1\}\cap\{\nexists\mbox{ $\mathcal {C}_1,\mathcal
{C}_2\in\mathscr{C}(\delta/4)$ in $\Lambda_{KR}$ such that
$0<d_{\infty}(\mathcal {C}_1,\mathcal {C}_2)\leq 3\delta_1$}\}.
\end{align*}

It remains to bound $\mathbf{P}[T_{0,n}^{p,\theta}\leq T_{0,n}^{\theta}]$ from below.
Combining (\ref{e42}) and Lemma \ref{l7}, we get that for each fixed
$\theta\in[0,2\pi]$ and $n\in \mathbb{N}$, $T_{0,n}^{\theta}$ is
almost surely finite.  Moreover, by Theorem \ref{t3} we known that
all the continuum clusters of $\mathscr{C}$ are almost surely bounded and the distribution of
$\mathscr{C}$ is invariant under rotations.  Therefore, for each
$R\geq 1$ and $\epsilon>0$, there is an integer $N=N(\epsilon,R)\geq
2R$ and a $\delta_0=\delta_0(\epsilon,R)\in(0,1/4)$ such that for all $\theta\in[0,2\pi]$, $n\in\mathbb{N}$ with $n\leq R$, and the event
\begin{equation*}
\mathcal {G}_n^{\theta}(\delta_0,N):=\left\{
\begin{aligned}
&\mbox{$\exists$ a chain $\Gamma$ from $\mathcal {C}(0)$
to $\mathcal {C}(ne^{i\theta})$, such that $T_{0,n}^{\theta}=|\Gamma|-1$},\\
&\mbox{$\diam(\mathcal {C})\geq\delta_0$ for all $\mathcal
{C}\in\Gamma$, and $\Gamma\subset\Lambda_N$}
\end{aligned}
\right\},
\end{equation*}
we have
\begin{equation}\label{e43}
\mathbf{P}[\mathcal {G}_n^{\theta}(\delta_0,N)]\geq 1-\epsilon/4.
\end{equation}
Then, similarly to the proof of (\ref{e42}), for each $R\geq
1$ and $\epsilon>0$ there exist $N=N(\epsilon,R)\geq 2R$ with $N\in\mathbb{N}$,
$\delta_0=\delta_0(\epsilon,R)\in(0,1/4)$,
$\delta_2=\delta_2(\epsilon,R)\in(0,\delta_0/100)$ and
$p_2=p_2(\epsilon,R)\in(0,p_c)$, such that for all $p\in(p_2,p_c)$,
$\theta\in[0,2\pi]$ and $n\in\mathbb{N}$ with $n\leq R$,
\begin{align}
&\mathbf{P}[T_{0,n}^{p,\theta}\leq T_{0,n}^{\theta}]\nonumber\\
&\geq\mathbf{P}\left[
\begin{aligned}
&\mbox{$\mathcal {G}_n^{\theta}(\delta_0,N)$ occurs, and there is a
chain $\Gamma^p$ from $\mathcal {C}^p(0)$ to $\mathcal
{C}^p(ne^{i\theta})$,}\\
&\mbox{such that $|\Gamma^p|=|\Gamma|$, $\diam(\mathcal
{C}^p)\geq\delta_0/2$ for all $\mathcal {C}^p\in\Gamma^p$, and
$\Gamma^p\subset\Lambda_N^{\theta}$}
\end{aligned}
\right]\nonumber\\
&\geq\mathbf{P}[\mathcal
{G}_n^{\theta}(\delta_0,N)]-\mathbf{P}[d_H(\mathcal
{C}^p(0),\mathcal {C}(0))\geq\delta_2]-\mathbf{P}[d_H(\mathcal
{C}^p(ne^{i\theta}),\mathcal {C}(ne^{i\theta}))\geq\delta_2]-\mathbf{P}[\dist(\mathscr{C}^p|_{\Lambda_N},\mathscr{C}|_{\Lambda_N})\geq\delta_2]\nonumber\\
&\quad{}-\mathbf{P}[\exists\mbox{ $\mathcal {C}_1^p,\mathcal
{C}_2^p\in\mathscr{C}^p(\delta_0/4)$ in $\Lambda_N$ such that
$0<d_{\infty}(\mathcal {C}_1^p,\mathcal {C}_2^p)\leq 3\delta_2$ and $(\mathcal {C}_1^p,\mathcal {C}_2^p)$ is not a chain}]\nonumber\\
&\geq 1-\epsilon/2\quad\mbox{by (\ref{e43}), Lemma \ref{l37},
Theorem \ref{t3} and (\ref{e47})}.\label{e44}
\end{align}
Then Lemma \ref{l29} follows from a combination of inequalities
(\ref{e44}) and (\ref{e42}).
\end{proof}

\begin{lemma}\label{l3}
Let $C_0>0$ be the absolute constant in Lemma \ref{l7}.  We have
\begin{equation}\label{e94}
\mathbf{E}^{\infty}[T_{0,1}]\leq C_0.
\end{equation}
Furthermore, there is a constant $C_1>0$ such that
for each $\epsilon>0$, there exists $N=N(\epsilon)>0$ such that for
all $n\in\mathbb{N}$ with $n>N$,
\begin{equation}\label{e98}
\mathbf{P}^{\infty}[T_{0,n}\geq C_1n]\geq 1-\epsilon.
\end{equation}
\end{lemma}
\begin{proof}
Combining Lemmas \ref{l7} and \ref{l29} gives (\ref{e94}).

Inequality (\ref{e18}) implies that there exists a constant
$C_1>0$ such that for each $\epsilon>0$, there exists
$N=N(\epsilon)>0$ such that for all $p\in(p(10),p_c)$ and $n\in\mathbb{N}$ with $n>N$,
\begin{equation*}
\mathbf{P}_p^{\eta}[T_{0,n}^p\geq C_1n]\geq 1-\epsilon/2.
\end{equation*}
Then (\ref{e98}) follows from this and Lemma \ref{l29}.
\end{proof}

The following is a law of large numbers for the point-to-point
passage times of our continuum FPP.
\begin{proposition}\label{p2}
Suppose $n\in\mathbb{N}$.  There exists a constant $\nu>0$ such
that for any fixed $\theta\in[0,2\pi]$,
\begin{equation*}
\lim_{n\rightarrow\infty}\frac{T_{0,n}^{\theta}}{n}=\nu\quad\mbox{
$\mathbf{P}^{\infty}$-a.s.
and}\quad\lim_{n\rightarrow\infty}\frac{\mathbf{E}^{\infty}T_{0,n}^{\theta}}{n}=\inf_n\frac{\mathbf{E}^{\infty}T_{0,n}^{\theta}}{n}=\nu.
\end{equation*}
\end{proposition}

\begin{proof}
Since the law of $\mathscr{C}$ is invariant under rotations by Theorem \ref{t3}, all the families $(T_{0,n}^{\theta})_{n\in\mathbb{N}}$ for $\theta\in[0,2\pi]$ have the same distribution.  Thus, to prove Proposition \ref{p2}, it suffices to prove it in the case $\theta=0$.  Suppose that $m,n\in\mathbb{Z}_+$ with $m<n$.  We verify that the
family of random variables $(T_{m,n})_{0\leq m<n}$ satisfies the
conditions of the subadditive ergodic theorem (see, e.g.,
\cite{Lig85} or Theorem 2.2 in \cite{ADH17}):
\begin{itemize}
\item $T_{0,n}\leq T_{0,m}+T_{m,n}$ for all $0<m<n$.\\
Note that a concatenation of chains from $\mathcal {C}(0)$ to
$\mathcal {C}(m)$ and from $\mathcal {C}(m)$ to $\mathcal {C}(n)$
yields a chain from $\mathcal {C}(0)$ to $\mathcal {C}(n)$.
Combining this observation and the definition of $T_{m,n}$ gives the
above triangle inequality.
\item The distributions of the sequences $(T_{m,m+j})_{j\geq 1}$ and $(T_{m+1,m+j+1})_{j\geq 1}$
are the same for all $m\geq 0$.\\
The law of $\mathscr{C}$ is translation invariant by Theorem
\ref{t3}, which implies this immediately.
\item $(T_{nj,(n+1)j})_{n\geq 1}$ is a stationary ergodic
sequence for each $j\geq 1$.\\
Define the horizontal shifts of the plane $\tau_j$:
$\mathbb{C}\rightarrow \mathbb{C}, z\mapsto z-j$.  Then
$T_{nj,(n+1)j}(\mathscr{C})=T_{0,j}(\tau_{nj}\mathscr{C})$.  By
Theorem \ref{t3}, the law of $\mathscr{C}$ is invariant under
translations, so $\tau_j$ is measure preserving and
$(T_{nj,(n+1)j})_{n\geq 1}$ is stationary.  Next we show that
$\tau_j$ is also mixing, which implies that $(T_{nj,(n+1)j})_{n\geq
1}$ is ergodic.  When $\mathcal {A},\mathcal {B}$ are events which
depend only on the realization of $\mathscr{C}$ inside some box
$\Lambda_k$, then $\lim_{n\rightarrow\infty}\mathbb{P}[\mathcal
{A}\cap\tau_j^{-n}\mathcal {B}]=\mathbb{P}[\mathcal
{A}]\mathbb{P}[\mathcal {B}]$ follows immediately since the events
$\mathcal {A}$ and $\tau_j^{-n}\mathcal {B}$ are independent for
large $n$ by Theorem \ref{t3}.  For arbitrary events $\mathcal {A}$
and $\mathcal {B}$ depending on $\mathscr{C}$, one approximates
$\mathcal {A}$ and $\mathcal {B}$ by events which depend only on the
realization of $\mathscr{C}$ inside $\Lambda_k$, with
$k\rightarrow\infty$.  Then the result follows easily.
\item $\mathbb{E}[T_{0,1}]<\infty$ and there is a constant $C>0$ such that for each $n$, $\mathbb{E}[T_{0,n}]>-Cn$.\\
$\mathbb{E}[T_{0,1}]<\infty$ follows from (\ref{e94}) and the
rest of this item is obvious since $T_{0,n}$ is non-negative by
definition.
\end{itemize}
Then by the subadditive ergodic theorem and (\ref{e98}) we obtain
Proposition \ref{p2}, where we use (\ref{e98}) to show that the limit $\nu$ is positive.
\end{proof}

\section{Convergence of normalized time constants}\label{global}
Let $\nu$ be as in Proposition \ref{p2}, which is the ``time constant'' for the continuum FPP.
First, in Section \ref{upper}, we show that for each
fixed $u\in\mathbb{U}$, $\nu$ is an upper bound for the upper limit of $L(p)\mu(p,u)$.  Next, in Section \ref{lower}, we show that $\nu$ is also a lower bound
for the lower limit of $L(p)\mu(p,u)$.  Finally, in Section \ref{final} we prove our main result, Theorem \ref{t2}, which is an immediate consequence of the above two results.

The basic idea is as follows.  The results in Section \ref{fpp} give that
the discrete FPP on $\eta\mathbb{T}$ in any fixed box is ``well-approximated'' by the corresponding continuum FPP, as $p\uparrow p_c$.
This, combined with appropriate renormalization arguments, implies that the approximation also holds in the whole plane.
\subsection{Upper bound}\label{upper}
The goal of this subsection is to prove:
\begin{lemma}\label{l16}
For each $u\in\mathbb{U}$, we have $\limsup_{p\uparrow
p_c}L(p)\mu(p,u)\leq\nu$.
\end{lemma}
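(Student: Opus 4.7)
The plan is to execute the renormalization argument sketched in Subsection \ref{strategy}: writing $u = e^{i\theta}$, I will show that $L(p)\mu(p,u)$ is bounded above by $\mathbf{E}^0[T_{0,N}^{\theta}]/N$ plus error terms that vanish as $p\uparrow p_c$ and $N\to\infty$, for any fixed large integer $N$; then sending $N\to\infty$ and invoking Proposition \ref{p2} delivers the upper bound $\nu$.

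First I would use subadditivity on $\mathbb{T}$: Fekete's lemma applied to $n\mapsto \mathbf{E}_p[T(0,nu)]$ gives $\mu(p,u)=\inf_n \mathbf{E}_p[T(0,nu)]/n$. Since passage times on $\mathbb{T}$ and on $L(p)^{-1}\mathbb{T}$ agree under the natural rescaling of the target (the passage time is just a count of yellow sites along a path, so it is invariant under dilation), this yields
\begin{equation*}
L(p)\mu(p,u)\leq \frac{\mathbf{E}_p^{\eta(p)}\bigl[T^p(0,mu)\bigr]}{m}\quad\text{whenever }mL(p)\in\mathbb{N},
\end{equation*}
where $T^p$ denotes passage time on $L(p)^{-1}\mathbb{T}$. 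Fix a large integer $N$ and set $m_p:=\lfloor NL(p)\rfloor/L(p)$, so $m_p\to N$ as $p\uparrow p_c$.

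Next I would relate the lattice-point passage time to the cluster-to-cluster quantity. Since any two sites of the same blue cluster have zero internal passage time, the triangle inequality gives
\begin{equation*}
T^p(0,m_p u)\leq T^p(0,\mathcal{C}^p(0))+T^{p,\theta}_{0,N}+T^p(\mathcal{C}^p(Nu),m_p u).
\end{equation*}
Because $\mathcal{C}^p(z)\subset\mathbb{D}_{1/2}(z)$ and $|Nu-m_p u|\leq 1/L(p)$, near-critical RSW together with Lemmas \ref{l10} and \ref{l7} make the expectations of the first and third terms uniformly bounded in $p\in(p(10),p_c)$ by some constant $C$. For the middle term, Lemma \ref{l29} gives $T^{p,\theta}_{0,N}\to T_{0,N}^{\theta}$ in probability, and Lemma \ref{l7} provides the uniform exponential tail $\mathbf{P}_p^{\eta(p)}[T^{p,\theta}_{j,j+1}(1)\geq k]\leq C_1e^{-C_2 k}$, which via $T^{p,\theta}_{0,N}\leq\sum_{j=0}^{N-1}T^{p,\theta}_{j,j+1}(1)$ yields uniform integrability and hence $\mathbf{E}_p^{\eta(p)}[T^{p,\theta}_{0,N}]\to\mathbf{E}^0[T^{\theta}_{0,N}]$. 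Combining these ingredients,
\begin{equation*}
\limsup_{p\uparrow p_c}L(p)\mu(p,u)\leq \frac{\mathbf{E}^0[T^{\theta}_{0,N}]}{N}+\frac{C}{N},
\end{equation*}
and sending $N\to\infty$ with Proposition \ref{p2} finishes the proof.

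The main technical obstacle is the control on the ``endpoint'' contributions $T^p(0,\mathcal{C}^p(0))$ and $T^p(\mathcal{C}^p(Nu),m_p u)$: one must verify that, despite the somewhat delicate definition of $\mathcal{C}^p(z)$ as the outermost blue cluster in $\mathbb{D}_{1/2}(z)$ whose fattening surrounds $z$, the expected number of yellow sites needed to reach this cluster from the nearest lattice site is $O(1)$ uniformly in $p$ near $p_c$. This should follow from standard near-critical RSW/FKG, but some care is required to handle the case where the outermost cluster is very close to $\partial\mathbb{D}_{1/2}(z)$ and to absorb the $|N-m_p|\leq 1/L(p)$ correction via another application of Lemma \ref{l24}.
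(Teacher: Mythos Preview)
Your proposal has a genuine gap in the endpoint control, and it is not a technicality: the claim that $\mathbf{E}_p^{\eta(p)}[T^p(0,\mathcal{C}^p(0))]$ is bounded uniformly in $p\in(p(10),p_c)$ is false. On the rescaled lattice $\eta(p)\mathbb{T}$, the cluster $\mathcal{C}^p(0)$ typically has inner radius bounded away from $0$ (indeed $\mathbf{P}_p^{\eta(p)}[\mathcal{C}^p(0)\cap\mathbb{D}_{2^{-x}}(0)\neq\emptyset]\leq C_3 e^{-C_4 x}$, see (\ref{e13})), so reaching it from the single hexagon at $0$ requires crossing the annulus $\mathbb{A}(\eta(p),\delta_1)$ for some macroscopic $\delta_1$. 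By Proposition~\ref{p4}, the passage time of such a crossing equals the maximal number of disjoint yellow circuits around $0$ in that annulus, and near-critical RSW gives one such circuit in each dyadic annulus with probability bounded away from $0$. Hence $\mathbf{E}_p^{\eta(p)}[T^p(0,\mathcal{C}^p(0))]\asymp\log(\delta_1/\eta(p))\asymp\log L(p)\to\infty$ as $p\uparrow p_c$. This is exactly the critical-FPP phenomenon $\mathbf{E}_{p_c}[a_{0,n}]\asymp\log n$ (Chayes--Chayes--Durrett), read on the original lattice at scale $n=L(p)$. With $N$ fixed and $m_p\to N$, your displayed bound becomes $\mathbf{E}^0[T^\theta_{0,N}]/N + O(\log L(p))/N$, and the limsup in $p$ is $+\infty$.

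The paper's argument is designed precisely to avoid this. It does not compare a single fixed-$N$ passage time to $\mu(p,u)$; instead it fixes $N$, declares bonds $e\in E(\mathbb{Z}^2)$ to be ``good'' when the event of Lemma~\ref{l9} holds on the $N$-block at $e$, and uses Theorem~\ref{t8} together with Proposition~\ref{p7} to find, for each $p$ close to $p_c$, a good $\mathbb{Z}^2$-path from $0$ to $n$ of length $\leq(1+\epsilon)n$ with probability $\geq 3/4$, for all large $n$. Concatenating the $N$-block crossings gives $T(0,Nn)\leq(1+\epsilon)(\nu+\epsilon)Nn+2L(p)$ with probability $\geq 3/4$, and then one sends $n\to\infty$ \emph{for that fixed $p$} using (\ref{e28}); the additive $2L(p)$ endpoint cost disappears in this limit. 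Your approach keeps the macroscopic distance fixed at $N$ while $p\uparrow p_c$, so the microscopic endpoint cost never gets divided away. If you want to salvage the direct-expectation route, you would need to let $N=N(p)\to\infty$ with $\log L(p)/N(p)\to 0$, but then Lemma~\ref{l29} (which gives convergence of $T^{p,\theta}_{0,N}$ only for fixed $N$) is no longer available, and you are back to needing a block/renormalization step.
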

To show this, we need the following result on passage times
of paths constrained in a box.  For $x,y\in\mathbb{C}$ and $r>0$, define
\begin{equation*}
Box(x,y;r):=\left\{z\in\mathbb{C}:\mbox{$z$ is within $L^{\infty}$-distance $r$ of the straight line segment joining $x$ to $y$}\right\}.
\end{equation*}
\begin{lemma}\label{l9}
There exists $K\in\mathbb{N}$ such that for each $\epsilon>0$, there exists $N\in\mathbb{N}$ such that for all $n\geq N$, there exists $p_0\in(0,p_c)$ such that for all $p\in(p_0,p_c)$, $\theta\in[0,2\pi]$ and $z\in\mathbb{C}$,
\begin{equation*}
\mathbf{P}_p^{\eta}\left[\mbox{there is a path $\gamma$ from $\mathcal {C}^p(z)$
to $\mathcal {C}^p(z+ne^{i\theta})$ in $Box(z,z+ne^{i\theta};Kn)$ s.t.
$T(\gamma)\leq(\nu+\epsilon)n$}\right]\geq 1-\epsilon.
\end{equation*}
\end{lemma}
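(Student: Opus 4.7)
The plan is to combine three ingredients: Proposition \ref{p2} (law of large numbers for the continuum strip passage time together with $\nu(h)\downarrow\nu$), Lemma \ref{l29} (agreement of continuum and discrete strip passage times on fixed scales), and Lemma \ref{l39} (localization of the discrete strip geodesic to a rectangle), together with a block/renormalization argument to turn a ``local'' short-path event into a ``global'' short-path event in the box $Box(z,z+ne^{i\theta};K)$ of $n$-independent width. By translation invariance of $\mathbf{P}_p^{\eta(p)}$, it suffices to treat $z=0$.

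First I would fix $\epsilon>0$, use $\nu(h)\downarrow\nu$ from Proposition \ref{p2} to pick $h=h(\epsilon)\in\mathbb{N}$ with $\nu(h)<\nu+\epsilon/8$, and then, using the LLN part of Proposition \ref{p2}, pick $m=m(\epsilon)\in\mathbb{N}$ large enough that
\begin{equation*}
\mathbf{E}^0[T_{0,m}^{\theta}(h)]\le (\nu+\epsilon/4)\,m\qquad\text{and}\qquad \mathbf{P}^0[T_{0,m}^{\theta}(h)\le (\nu+\epsilon/3)m]\ge 1-\epsilon_1,
\end{equation*}
for an auxiliary $\epsilon_1=\epsilon_1(\epsilon)>0$ to be fixed later. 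Next, I would apply Lemma \ref{l39} at the fixed scale $R=m$ with tolerance $\epsilon_1$: this yields a constant $K_0=K_0(\epsilon_1)$ (\emph{independent of $n$}) and a $\delta=\delta(\epsilon_1,m)$ such that, for $p$ close to $p_c$, the event $\mathcal{F}^{p,\theta}_m(\delta,K_0;h)$ (producing a chain realizing $T^{p,\theta}_{0,m}(h)$ inside $\Lambda^{\theta,\eta(p)}_{K_0 m,h}$) has probability $\ge 1-\epsilon_1$. Combining this with Lemma \ref{l29} at $R=m$, I obtain, for all $p$ close enough to $p_c$, the local ``good'' event
\begin{equation*}
A_0^\theta := \mathcal{F}^{p,\theta}_m(\delta,K_0;h)\cap\{T^{p,\theta}_{0,m}(h)\le (\nu+\epsilon/3)m\},\qquad \mathbf{P}^{\eta(p)}_p[A_0^\theta]\ge 1-C\epsilon_1.
\end{equation*}
Crucially, $A_0^\theta$ is measurable with respect to the configuration in the bounded (in $n$) region $\Lambda^\theta_{K_0 m,h}$ (up to $\mathcal{C}^p(0),\mathcal{C}^p(me^{i\theta})$, which lie in $\mathbb{D}_{1/2}$ around each endpoint).

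For each $(k,\ell)\in\mathbb{Z}^2$, let $A_{k,\ell}^\theta$ be the translate of $A_0^\theta$ by $km\,e^{i\theta}+2\ell h\,ie^{i\theta}$. Each event depends only on the configuration in a fixed-size rotated box around its center, so the family $\{\mathbf{1}_{A_{k,\ell}^\theta}\}$ is finite-range dependent on $\mathbb{Z}^2$ (range depending on $\epsilon$, not on $n$), and by translation invariance each event has probability $\ge 1-C\epsilon_1$. Taking $\epsilon_1$ small enough, Theorem \ref{t8} shows that $\{\mathbf{1}_{A_{k,\ell}^\theta}\}$ stochastically dominates a Bernoulli site percolation on $\mathbb{Z}^2$ of density arbitrarily close to $1$. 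A comparison with the standard supercritical-percolation chemical-distance estimate (Proposition \ref{p7}, or its site analog obtained from Theorem \ref{t8} applied a second time to compare sites and bonds) then produces, with probability at least $1-\epsilon/3$, a self-avoiding path of good blocks in the renormalized lattice from a block near $(0,0)$ to a block near $(\lfloor n/m\rfloor,0)$, of renormalized length at most $(1+\epsilon/(8\nu))\lfloor n/m\rfloor$ and staying within a bounded number of rows $|\ell|\le L(\epsilon)$.

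Concatenating the local chains guaranteed by the $A_{k,\ell}^\theta$ along this renormalized path (using that consecutive good blocks share the distinguished cluster $\mathcal{C}^p(\cdot)$ at their common interface, possibly patching with a cluster-to-cluster hop of bounded cost handled by Lemma \ref{l7} and (\ref{i3}) of Proposition \ref{p4}), and then attaching the $O(1)$-block remainder from $\lfloor n/m\rfloor\cdot m\,e^{i\theta}$ to $ne^{i\theta}$, I obtain a discrete path $\gamma$ in $\eta(p)\mathbb{T}$ from $\mathcal{C}^p(0)$ to $\mathcal{C}^p(ne^{i\theta})$. The total passage time is at most
\begin{equation*}
(1+\epsilon/(8\nu))\lfloor n/m\rfloor\cdot(\nu+\epsilon/3)m+O_\epsilon(m)\le (\nu+\epsilon)\,n
\end{equation*}
for $n\ge N(\epsilon)$. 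Because each block's chain lies in a box of perpendicular size $2h$ and parallel size $2K_0 m$, and the renormalized path only uses $|\ell|\le L(\epsilon)$, the union of the local boxes lies in $Box(0,ne^{i\theta};K)$ with $K=K(\epsilon):=\max(K_0 m,\,2L(\epsilon)h)$, which is $n$-independent.

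The main obstacle is the renormalization step: specifically, ensuring that the renormalized path is (a) genuinely near-straight --- its length exceeds $n/m$ by only a multiplicative $(1+\epsilon/(8\nu))$ --- and (b) confined to a bounded number of rows $|\ell|\le L(\epsilon)$. (Naive union-bounding over the $\approx n/m$ blocks would force $K$ to grow with $n$ through the dependence $K_0=K_0(\epsilon_1)$.) Achieving (a) and (b) simultaneously requires pushing the block density $1-C\epsilon_1$ close enough to $1$ that the dominating supercritical percolation has chemical distance close to Euclidean distance over the relevant scales, and handling the concatenation at the block interfaces (where the endpoints of successive local chains must be matched to the canonical clusters $\mathcal{C}^p(kme^{i\theta})$); both are delicate but fit standard renormalization technology built on Theorem \ref{t8} and Proposition \ref{p7} together with the half-plane three-arm bound (\ref{e72}) to control the boundary behavior.
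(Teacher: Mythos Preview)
Your renormalization scheme is much heavier than what the lemma requires, and it also has a genuine gap. The crucial feature of the statement is that $p_0$ is allowed to depend on $n$; only $K$ and $N$ must be chosen before $n$. The paper exploits this directly: first pick $K_1\in\mathbb{N}$ (depending only on $\epsilon$) so that $\nu(K_1)<\nu+\epsilon/2$ and, by the LLN in Proposition~\ref{p2}, $\mathbf{P}^0[T_{0,n}^{\theta}(K_1)\le(\nu+\epsilon)n]\ge 1-\epsilon/5$ for all $n\ge N$. Then for each fixed $n\ge N$ apply Lemma~\ref{l29} at scale $R=n$ (this is where $p_0$ acquires its $n$-dependence) to transfer the bound to $T_{0,n}^{p,\theta}(K_1)$. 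Finally, RSW provides blue top-bottom crossings of $[-K_2,0]\times[-K_1,K_1]$ and $[n,n+K_2]\times[-K_1,K_1]$; since these barriers are entirely blue, any strip geodesic that exits $[-K_2,n+K_2]$ can be shortcut along them at zero cost, producing a path in $Box(0,n;K_2)$. The uniformity in $z$ is handled by a separate one-line estimate showing $\mathcal{C}^p(z)=\mathcal{C}^p(\tilde z)$ with high probability for the nearest lattice point $\tilde z$ (your appeal to ``translation invariance'' is not literally correct, since $\eta(p)\mathbb{T}$ is invariant only under lattice translations).

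The gap in your argument is claim~(b), that the renormalized open path from $(0,0)$ to $(\lfloor n/m\rfloor,0)$ can be taken with $|\ell|\le L(\epsilon)$ for an $n$-independent $L$. Proposition~\ref{p7} only controls the chemical distance $D(0,n)\le(1+\epsilon)n$; it says nothing about transverse confinement. In fact, in Bernoulli percolation at any fixed density $1-\delta<1$, the probability that a closed ``wall'' of height $2L+1$ occurs somewhere in $\{0,\dots,n\}\times\{-L,\dots,L\}$ is of order $n\delta^{2L+1}$, so a geodesic is forced out of the strip $|\ell|\le L$ at some column unless $L$ grows at least logarithmically in $n$. Thus your construction yields $K$ of order $L(\epsilon)h$ with $L$ growing in $n$, violating the conclusion. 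A one-dimensional renormalization (taking only the row $\ell=0$) does not help either: a union bound over the $n/m$ blocks forces $\epsilon_1$, hence $m$, to depend on $n$. The entire renormalization layer is unnecessary once you notice that Lemma~\ref{l29} can be invoked at scale $n$ itself; the paper defers renormalization to the next lemma (Lemma~\ref{l16}), where it is genuinely needed because there $p_0$ must be independent of the global distance.
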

\begin{proof}
Since the law of $\mathscr{C}$ is invariant under rotations by Theorem \ref{t3}, for any fixed $n$ the passage times $T_{0,n}^{\theta}$ for $\theta\in[0,2\pi]$ have the same distribution.  This and Proposition \ref{p2} imply that for each $\epsilon>0$, there exists
$N_1\in\mathbb{N}$ such that for all $n\geq
N_1$ and $\theta\in[0,2\pi]$,
\begin{equation*}
\mathbf{P}^{\infty}[T_{0,n}^{\theta}\leq(\nu+\epsilon)n]\geq 1-\epsilon/5.
\end{equation*}
Combining the above inequality and Lemma \ref{l29}, we obtain that, for
each $\epsilon>0$, there exists $N_1\in\mathbb{N}$ such
that for all $n\geq N_1$, there exists
$p_1\in(p(10),p_c)$ such that for all $p\in(p_1,p_c)$ and $\theta\in[0,2\pi]$,
\begin{equation}\label{e24}
\mathbf{P}_p^{\eta}[T_{0,n}^{p,\theta}\leq(\nu+\epsilon)n]\geq 1-\epsilon/4.
\end{equation}
By (\ref{e18}), there exists $K_1\in\mathbb{N}$ such that for each $\epsilon\in(0,1)$, there exists $N_2\in\mathbb{N}$ such that
for all $n\geq N_2$, $p\in(p(10),p_c)$ and $\theta\in[0,2\pi]$,
\begin{equation}\label{e25}
\mathbf{P}_p^{\eta}[\exists\mbox{ a path $\gamma$ starting at a site in $\mathbb{D}_{1/2}$, such
that $T(\gamma)\leq (\nu+1)n$ and }\gamma\not\subset Box(0,ne^{i\theta};K_1n)]\leq\epsilon/4.
\end{equation}
Then (\ref{e24}) and (\ref{e25}) imply that there is $K_1\in\mathbb{N}$ such that for each $\epsilon\in(0,1)$, there exists $N=\max\{N_1,N_2\}$ such that for all $n\geq N$, there exists $p_1\in(p(10),p_c)$ such that for all $p\in(p_1,p_c)$ and $\theta\in[0,2\pi]$,
\begin{equation}\label{e114}
\mathbf{P}_p^{\eta}\left[\mbox{$\exists$ a path $\gamma$ from $\mathcal {C}^p(0)$
to $\mathcal {C}^p(ne^{i\theta})$ in $Box(0,ne^{i\theta};K_1n)$ such that
$T(\gamma)\leq(\nu+\epsilon)n$}\right]\geq 1-\epsilon/2.
\end{equation}
We need to generalize this result to pairs of points $z,z+ne^{i\theta}$ for $z\in\mathbb{C}$ and $\theta\in[0,2\pi]$.  It is easy to see that for each $\epsilon>0$,
there exist $\delta\in(0,1/2)$ and $p_2\in(p(10/\delta),p_c)$ such that for all $p\in(p_2,p_c)$ and $x\in\mathbb{C}$,
\begin{align}
&\mathbf{P}_p^{\eta}[\mathcal {C}^p(y)=\mathcal {C}^p(x)\mbox{
for all $y\in\mathbb{D}_{\eta}(x)$}]\nonumber\\
&\geq\mathbf{P}_p^{\eta}\left[
\begin{aligned}
&\mbox{$\diam(\mathcal {C}^p(x))\geq\delta$ and there is a unique open cluster having diameter $\diam(\mathcal {C}^p(x))$ in $\mathbb{D}_{1/2}(x)$;}\\
&\mbox{all the open clusters which have diameters at least $\delta$ in $\mathbb{D}_{1/2+\eta}(x)$ lie in $\mathbb{D}_{1/2-\eta}(x)$}
\end{aligned}
\right]\nonumber\\
&\geq 1-\epsilon/4\quad\mbox{by FKG, RSW, Proposition \ref{p3} and
(\ref{e72}).}\label{e113}
\end{align}
For $z\in\mathbb{C}$, let $\tilde{z}$ denote the site of
$\eta\mathbb{T}$ closest to $z$.  Then, there exists $K=K_1+1\in\mathbb{N}$ such that for each $\epsilon>0$, there exists $N\in\mathbb{N}$ such that for all $n\geq N$, there exists $p_0=\max\{p_1,p_2\}\in(0,p_c)$ such that for all $p\in(p_0,p_c)$, $\theta\in[0,2\pi]$ and $z\in\mathbb{C}$,
\begin{align*}
&\mathbf{P}_p^{\eta}\left[\mbox{there is a path $\gamma$ from $\mathcal {C}^p(z)$
to $\mathcal {C}^p(z+ne^{i\theta})$ in $Box(z,z+ne^{i\theta};Kn)$ such that
$T(\gamma)\leq(\nu+\epsilon)n$}\right]\\
&\geq \mathbf{P}_p^{\eta}\left[
\begin{aligned}
&\mbox{$\mathcal {C}^p(\tilde{z})=\mathcal {C}^p(z)$, $\mathcal
{C}^p(\tilde{z}+ne^{i\theta})=\mathcal {C}^p(z+ne^{i\theta})$, and there is a path $\gamma$ from}\\
&\mbox{$\mathcal {C}^p(\tilde{z})$ to $\mathcal {C}^p(\tilde{z}+ne^{i\theta})$ in $Box(\tilde{z},\tilde{z}+ne^{i\theta};K_1n)$ such that $T(\gamma)\leq(\nu+\epsilon)n$}
\end{aligned}
\right]\\
&\geq 1-\epsilon\quad\mbox{by (\ref{e113}), (\ref{e114}) and the
symmetry of $\eta\mathbb{T}$},
\end{align*}
which concludes the proof.
\end{proof}

\begin{proof}[Proof of Lemma \ref{l16}]
We will use a renormalization argument.  For simplicity, we prove this lemma in the case $u=1$; the proof for a general $u\in\mathbb{U}$
is analogous.  By Lemma \ref{l9}, there exists $K\in\mathbb{N}$ such that for each $\epsilon>0$, we can choose $N=N(\epsilon)\in\mathbb{N}$ and $p_0=p_0(\epsilon)\in(0,p_c)$, such that for each $p\in(p_0,p_c)$
and each bond $e=(e_-,e_+)\in E(\mathbb{Z}^2)$,
\begin{equation}\label{e26}
\mathbf{P}_p^{\eta}[\mbox{$\exists$ a path $\gamma$ from $\mathcal {C}^p(Ne_-)$ to $\mathcal {C}^p(Ne_+)$ in $Box(Ne_-,Ne_+;KN)$ such that $T(\gamma)\leq(\nu+\epsilon)N$}]\geq 1-\epsilon.
\end{equation}
We declare a bond $e\in E(\mathbb{Z}^2)$ to be \textbf{$\epsilon$-good} if the
event in (\ref{e26}) occurs.  We call a (bond) path of
$\mathbb{Z}^2$ $\epsilon$-good if all the bonds of this path are $\epsilon$-good.  The
family $X^p(\epsilon)=\{X_e^p(\epsilon):e\in E(\mathbb{Z}^2)\}$ is
defined by
\begin{equation*}
X_e^p(\epsilon)(\omega_p^{\eta}):=\left\{
\begin{aligned}
&1 &\mbox{ if $e$ is $\epsilon$-good},\\
&0 &\mbox{ otherwise}.
\end{aligned}
\right.
\end{equation*}
It is easy to check that $X^p(\epsilon)$ is a $6K$-dependent family; see Figure \ref{renorm}.
\begin{figure}
\begin{center}
\includegraphics[height=0.3\textwidth]{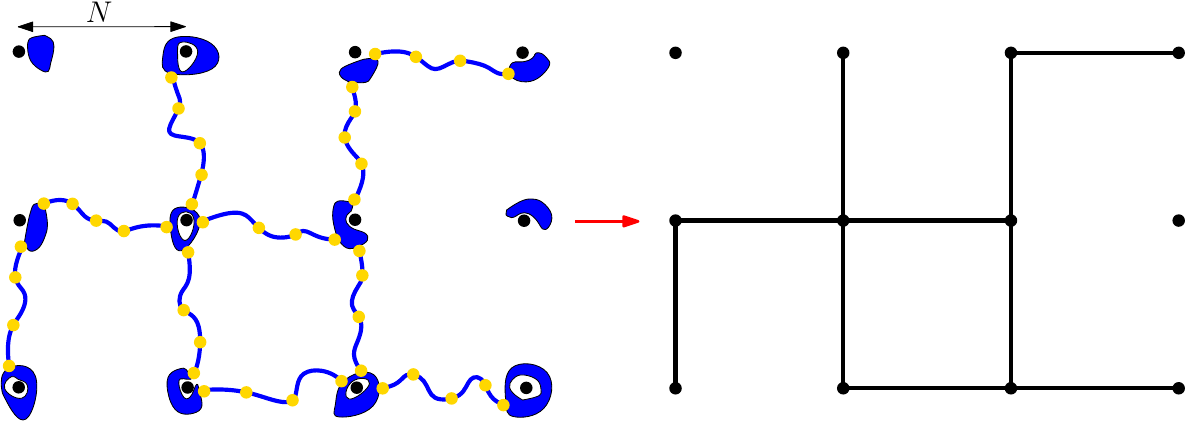}
\caption{Bernoulli site percolation configuration $\omega_p^{\eta}$ on $\eta\mathbb{T}$ induces the $6K$-dependent family $X^p(\epsilon)$, which is a $6K$-dependent bond percolation on $\mathbb{Z}^2$.
}\label{renorm}
\end{center}
\end{figure}

We have from Proposition \ref{p7} that for each
$\epsilon>0$, there is $p_1=p_1(\epsilon)\in (0,1/2)$ such that for all large $n$ (depending on $\epsilon$),
\begin{equation}\label{e27}
\mathbf{P}_{\mathbb{Z}^2,p_1}^{bond}[D(0,n)\leq (1+\epsilon)n]\geq
3/4.
\end{equation}
Our proof also works if the value $3/4$ in inequality (\ref{e27}) is
replaced with any other fixed value in (0,1).  By (\ref{e26}) and
Theorem \ref{t8}, for each $\epsilon>0$, we can choose $\epsilon_1=\epsilon_1(\epsilon)\in(0,\epsilon)$, such
that for all $p\in(p_0(\epsilon_1),p_c)$, $X^p(\epsilon_1)$
stochastically dominates the Bernoulli bond percolation on
$\mathbb{Z}^2$ with measure $\mathbf{P}_{\mathbb{Z}^2,p_1}^{bond}$. (Recall that in the present paper, each bond of $\mathbb{Z}^2$
takes the value 0 with probability $p_1$ under
$\mathbf{P}_{\mathbb{Z}^2,p_1}^{bond}$.)  Note that for the configuration $\omega_p^{\eta}$, if there is an $\epsilon_1$-good path of $\mathbb{Z}^2$
from 0 to $n$ so that the number of bonds of this path is not larger than $(1+\epsilon)n$, then
\begin{equation*}
T(\mathcal {C}^p(0),\mathcal
{C}^p(Nn))\leq(1+\epsilon)(\nu+\epsilon_1)Nn\leq(1+\epsilon)(\nu+\epsilon)Nn.
\end{equation*}
Therefore, for all $p\in(p_0(\epsilon_1),p_c)$ and all large $n$,
\begin{equation*}
\mathbf{P}_p^{\eta}[T(\mathcal {C}^p(0),\mathcal {C}^p(Nn))\leq
(1+\epsilon)(\nu+\epsilon)Nn]\geq
\mathbf{P}_{\mathbb{Z}^2,p_1}^{bond}[D(0,n)\leq (1+\epsilon)n]\geq
3/4.
\end{equation*}
Then we have
\begin{equation*}
\mathbf{P}_p^{\eta}[T(0,Nn)\leq(1+\epsilon)(\nu+\epsilon)Nn+2/\eta]\geq\mathbf{P}_p^{\eta}[T(\mathcal {C}^p(0),\mathcal {C}^p(Nn))\leq(1+\epsilon)(\nu+\epsilon)Nn]\geq 3/4.
\end{equation*}
By using this and the fact that $\mathbf{P}_p$-almost surely $a_{0,m}/m$
tends to $\mu(p)$ as $m\rightarrow\infty$ (see
(\ref{e28})), we obtain that ${L(p)\mu(p)\leq(1+\epsilon)(\nu+\epsilon)}$ for all $p\in(p_0(\epsilon_1),p_c)$.  Letting $\epsilon\rightarrow 0$, we have
$\limsup_{p\uparrow p_c}L(p)\mu(p)\leq\nu$.
\end{proof}
\begin{remark}
Lemma \ref{l9} is a key ingredient for the renormalization argument above.  The following stronger version of Lemma \ref{l9} for a fixed $\theta$ was proved in \cite{Yao21}:

Fix any $\theta\in[0,2\pi]$.  For each $\epsilon>0$, there
exist $K,N\in\mathbb{N}$, such that for all $n\geq N$, there exists $p_0=p_0(\theta,\epsilon,n)\in(0,p_c)$ such that
for all $p\in(p_0,p_c)$ and $z\in\mathbb{C}$,
\begin{equation*}
\mathbf{P}_p^{\eta}\left[\mbox{there is a path $\gamma$ from $\mathcal {C}^p(z)$
to $\mathcal {C}^p(z+ne^{i\theta})$ in $Box(z,z+ne^{i\theta};K)$ such that
$T(\gamma)\leq(\nu+\epsilon)n$}\right]\geq 1-\epsilon.
\end{equation*}
An application of this result may yield an 1-dependent bond percolation in a similar renormalization argument as above.  However, to show this version of Lemma \ref{l9} we need to construct the scaling limit of the collection of pieces of clusters contained in a strip (see Remark \ref{portion}) and study FPP in strips, which lead to many technical complications.  See \cite{Yao21} for more details.
\end{remark}

\subsection{Lower bound}\label{lower}
The goal of this subsection is to prove:
\begin{lemma}\label{l17}
For each $u\in\mathbb{U}$, we have $\liminf_{p\uparrow
p_c}L(p)\mu(p,u)\geq\nu$.
\end{lemma}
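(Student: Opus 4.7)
The plan is to implement a block-renormalization argument in the spirit of Grimmett and Kesten~\cite{GK84}, taking the strip-FPP convergence of Proposition~\ref{p2} as the local input and upgrading it, via Theorem~\ref{t8}, to a global lower bound on $T^p(0,n)$ on $\eta(p)\mathbb{T}$. By the rotational invariance of the continuum (Theorem~\ref{t3}) together with Lemma~\ref{l29}, it suffices to treat the case $u=1$.

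Fix $\epsilon>0$. By Proposition~\ref{p2}, first choose $h\in\mathbb{N}$ with $\nu(h)\geq\nu-\epsilon/4$, and then $N\in\mathbb{N}$ so large that the strip-FPP concentration event
\[
\mathcal{E}_N^{\theta}:=\left\{T_{0,N}^{\theta}(h)\geq(\nu-\epsilon/3)N\right\}
\]
has probability close to $1$ under $\mathbf{P}^0$, uniformly in a finite set of rotation angles $\theta$ to be chosen below. By Lemma~\ref{l29}, the corresponding discrete event has probability close to $1$ under $\mathbf{P}_p^{\eta(p)}$ for $p$ sufficiently close to $p_c$. On this discrete event, Proposition~\ref{p4}(\ref{i3}) supplies at least $(\nu-\epsilon/3)N$ disjoint yellow separating paths within the corresponding strip, and these are the ``building blocks'' for the renormalization.

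Tile a neighborhood of the line segment $[0,n]$ by mesoscopic rotated rectangles $z_{ij}^{(\alpha)}+e^{i\theta_\alpha}\Lambda_{N,h}$, indexed by $(i,j)\in\mathbb{Z}^2$ and by the finite set of angles $\theta_\alpha$. Declare a renormalized site \textbf{good} if the strip-FPP event for its rectangle holds together with a standard RSW--FKG connectivity event linking it to its neighbors. The family of good-site indicators is finitely dependent with marginal probability close to $1$, so by Theorem~\ref{t8} it stochastically dominates a supercritical Bernoulli site percolation on $\mathbb{Z}^2$. To convert this into a lower bound on $T^p(0,n)$, use the disjoint-circuit characterization of Proposition~\ref{p4}(\ref{i2}): place concentric rings of good rectangles around $0$ inside the annulus $A(0;\rho n,(1-\rho)n)$ for small $\rho=\epsilon/(8\nu)$, concatenate yellow separating arcs from consecutive good rectangles within each ring (via the auxiliary RSW events) into yellow circuits around $0$, and sum the contributions across rings. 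A standard large-deviation argument for supercritical $1$-dependent percolation handles the small fraction of bad rectangles and shows that, with probability tending to $1$ as $n\to\infty$, the total number of disjoint circuits constructed exceeds $(\nu-\epsilon)n$; Proposition~\ref{p4}(\ref{i2}) then gives $T^p(0,n)\geq(\nu-\epsilon)n$.

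Combining this high-probability bound with the shape theorem~(\ref{e1}) and the exponential tail estimate of Lemma~\ref{l7} yields $L(p)\mu(p,1)\geq\nu-\epsilon$ for all $p$ sufficiently close to $p_c$; letting $\epsilon\downarrow 0$ completes the proof. The principal technical obstacle is the circuit construction: choosing the rectangle orientations and the auxiliary RSW events so that the per-block yellow arcs assemble into genuinely \emph{disjoint} circuits around $0$, with circuit count sharp enough to match the strip-FPP count per ring while absorbing the small density of bad rectangles, requires a delicate annular-block geometry. The subtlety is that Proposition~\ref{p2} furnishes only the \emph{long-direction} strip FPP, and hence (via Proposition~\ref{p4}(\ref{i3})) yellow paths transverse to the long direction, whereas yellow circuits around $0$ run in the tangential direction; the four-sector decomposition of the annulus, coupled with appropriate rotations $\theta_\alpha$, is what allows one to extract the required tangential yellow arcs from the rotated strip-FPP lower bounds, in the near-critical analogue of the planar block constructions in~\cite{GK84}.
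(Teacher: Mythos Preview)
Your plan departs substantially from the paper's argument, and the circuit-construction step you flag as the ``principal technical obstacle'' is a genuine gap, not a technicality. The paper never builds yellow circuits. It works on the primal side, following Grimmett--Kesten: given any path $\gamma$ from $0$ to $\{z_1\ge n\}$ with $T(\gamma)\le n(\nu-5\epsilon)$, one records the sequence of mesoscopic $N$-boxes $\gamma$ visits and loop-removes to obtain $\rho\ge n/(M+N)-1$ distinct boxes. If the total passage time is small then at least $\epsilon\rho/(2\nu)$ of the box-to-box segments carry passage time below $(\nu-2\epsilon)L$ (``white'', Lemma~\ref{l25}); each white point forces a line-to-line crossing of a box of aspect ratio roughly $2$ with passage time below $(M+N+1)(\nu-2\epsilon)$. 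The local input is Lemma~\ref{l23}: such crossings have small probability, uniformly for $p$ near $p_c$. That lemma is proved via Lemma~\ref{l28}, which in turn rests on (\ref{e45}) and Proposition~\ref{p2} --- the \emph{full-plane} comparison, not the strip one. A union bound over the at most $(8(M/N+1))^\rho$ box-sequences and $\binom{\rho}{w}$ white-point choices (Lemmas~\ref{l26},~\ref{l27}) then yields $\mathbf{P}_p^{\eta(p)}[b_{0,n}^p<n(\nu-5\epsilon)]\le K_1\delta_1^n$.

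The trouble with your dual route is the passage from ``$(\nu-\epsilon/3)N$ yellow paths separating $\mathcal C^p(0)$ from $\mathcal C^p(Ne^{i\theta})$ \emph{in the strip} $\Lambda_{\infty,h}^{\theta}$'' to ``$(\nu-\epsilon)n$ disjoint yellow circuits around $0$ in the plane''. Proposition~\ref{p4}(\ref{i3}) supplies paths that separate only within the strip; most are top-to-bottom crossings, and closing each one into a full circuit requires yellow connections \emph{outside} the strip, which your good-block event does not control. Matching arcs across angular sectors is likewise uncontrolled: the $\sim\nu N$ crossings in one radial block are spread over a radial interval of length $N$ with no information on their individual positions, so there is no mechanism to pair them with crossings in the adjacent sector at the same radius, and RSW--FKG gluing produces at best $O(1)$ --- not $\nu N$ --- circuits per annular ring. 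The sharp constant $\nu$ does not survive this concatenation, which is exactly why the paper stays on the primal side and carries $\nu$ through a single line-to-line estimate rather than through any gluing.
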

In \cite{GK84}, Grimmett and Kesten used a ``block approach'' to
obtain exponential large deviation bounds for first-passage times
for a single FPP model.  We will use their method with some
modifications to prove Lemma \ref{l17}, concerning the family
$\{\mathbf{P}_p^{\eta}\}_{p\in(0,p_c)}$ of Bernoulli FPP measures.
Similarly to the proof of Lemma \ref{l16}, the proof of Lemma
\ref{l17} also employs a type of renormalization, but it is quite
different and more complicated.

The proof of Lemma \ref{l17} is divided into two parts: estimates for line-to-line passage times
and a renormalization argument, which are given in Sections \ref{linetoline} and \ref{renormalization}, respectively.

\subsubsection{Line-to-line passage time}\label{linetoline}
Let $\mathbb{D}^{\eta}(z)$ denote the union of all hexagons of $\eta\mathbb{H}$ which intersect $\mathbb{D}(z)$.  We require the following lemma in order to study the line-to-line passage time $l_{n,m}^{p,\theta}(z)$ defined above Lemma \ref{l24}.  The main difficulty of its proof comes from the endpoints of a geodesic between $\mathbb{D}^{\eta}(x)$ and $\mathbb{D}^{\eta}(y)$.  More precisely, we need to bound the passage time from
$\mathcal {C}^p(\tilde{x})$ (resp. $\mathcal {C}^p(\tilde{y})$) to this geodesic, where $\tilde{x}$ is a well-chosen point in $\mathbb{D}(x)$.
\begin{lemma}\label{l28}
For each $\epsilon\in(0,\nu)$ and $\delta>0$, there exists an integer
$N=N(\epsilon,\delta)\geq 10$, such that for any integers $n$ and $m$ with $N\leq n\leq m$,
there exists $p_0=p_0(\epsilon,\delta,m)\in(0,p_c)$ such that for
all $p\in(p_0,p_c)$ and $x,y\in\mathbb{C}$ with
$n\leq |x-y|\leq m$,
\begin{equation*}
\mathbf{P}_p^{\eta}[T(\mathbb{D}^{\eta}(x),\mathbb{D}^{\eta}(y))\leq
(\nu-\epsilon)n]\leq\delta.
\end{equation*}
\end{lemma}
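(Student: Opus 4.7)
The plan is to deduce the lemma from the law of large numbers for the continuum passage time $T_{0,n}^\theta$ (Proposition \ref{p2}), Lemma \ref{l29} for transferring the bound to the discrete model, and the rotation/translation invariance of $\omega^0$ from Theorem \ref{t3}. Throughout, I would work under a coupling in which $\omega_p^{\eta(p)}\to\omega^0$ almost surely.

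First, on the continuum side, Proposition \ref{p2} gives $T_{0,n}^\theta/n\to\nu$ in probability for each fixed $\theta$, and Theorem \ref{t3} ensures the distribution of $T_{0,n}^\theta$ is independent of $\theta$. Thus for any $\epsilon,\delta>0$ there exists $N_1=N_1(\epsilon,\delta)$ such that $\mathbf{P}^0[T_{0,n}^\theta<(\nu-\epsilon/2)n]\leq\delta/4$ for all $n\geq N_1$ and all $\theta\in[0,2\pi]$. Applying Lemma \ref{l29} with $R=m$ then yields a threshold $p_0=p_0(\epsilon,\delta,m)\in(0,p_c)$ such that for all $p\in(p_0,p_c)$, all $\theta\in[0,2\pi]$, and all integers $n\in[N_1,m]$,
\[
\mathbf{P}\!\left[T_{0,n}^{p,\theta}\geq(\nu-\epsilon/2)n\right]\geq 1-\delta/2,
\]
uniformly in $\theta$.

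Next, I would connect the cluster-to-cluster passage time $T_{0,n}^{p,\theta}=T(\mathcal{C}^p(0),\mathcal{C}^p(ne^{i\theta}))$ to the disk-to-disk quantity $T(\mathbb{D}^{\eta(p)}(x),\mathbb{D}^{\eta(p)}(y))$. For arbitrary $x\in\mathbb{C}$, translation invariance of $\eta(p)\mathbb{T}$, modulo a lattice shift of order $\eta(p)$, reduces to the case $x=0$. Any geodesic $\gamma$ realizing $T(\mathbb{D}^{\eta(p)}(0),\mathbb{D}^{\eta(p)}(y))$ can be extended at each endpoint by a short path reaching $\mathcal{C}^p(0)$ or $\mathcal{C}^p(y)$; since with high probability $\mathcal{C}^p(z)\subset\mathbb{A}(z;\delta_1,1/2-\delta_1)$ for some $\delta_1>0$ (by Lemma \ref{l13}), Lemma \ref{l10} applied to an annulus of order-unity radii bounds each extension cost by a constant $C$ with probability at least $1-\delta/8$. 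This gives $T(\mathcal{C}^p(x),\mathcal{C}^p(y))\leq T(\mathbb{D}^{\eta(p)}(x),\mathbb{D}^{\eta(p)}(y))+2C$, and hence
\[
T(\mathbb{D}^{\eta(p)}(x),\mathbb{D}^{\eta(p)}(y))\geq T(\mathcal{C}^p(x),\mathcal{C}^p(y))-2C\geq(\nu-\epsilon/2)n-2C
\]
with probability at least $1-\delta$. This is at least $(\nu-\epsilon)n$ for $n\geq N:=\max\{N_1,4C/\epsilon\}$. Uniformity over $r=\|x-y\|_2\in[n,m]$ is handled by comparing $T(\mathcal{C}^p(x),\mathcal{C}^p(y))$ with $T_{0,\lfloor r\rfloor}^{p,\arg(y-x)}$, which by the previous step is at least $(\nu-\epsilon/2)\lfloor r\rfloor\geq(\nu-\epsilon/2)n$.

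The main obstacle will be ensuring that the cluster-versus-disk comparison and all the preceding bounds are uniform over the continuous parameters $(x,y,\theta)$, while the discrete lattice $\eta(p)\mathbb{T}$ enjoys only discrete translation and rotation symmetries. Rotation invariance of the continuum limit (Theorem \ref{t3}) together with the $\theta$-uniform convergence asserted in (\ref{e45}) of Lemma \ref{l29} produce uniformity in $\theta$ for $p$ close enough to $p_c$; translation by lattice vectors at scale $\eta(p)\to 0$ handles uniformity in $x$ with negligible error; and the non-integer separation $r\in[n,m]$ is managed by rounding to integers and absorbing the $O(1)$ extension cost into the choice of large-$n$ threshold.
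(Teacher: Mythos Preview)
Your proposal is essentially correct and follows the same route as the paper: deduce a $\theta$-uniform continuum lower bound from Proposition~\ref{p2} and the rotational invariance of $\mathscr{C}^0$, transfer it to $T_{0,n}^{p,\theta}$ via Lemma~\ref{l29}, then compare the cluster-to-cluster passage time with the disk-to-disk one by absorbing an $O(1)$ connection cost into the choice of $N$. The paper executes the last step slightly differently from what you sketch: rather than invoking Lemma~\ref{l10} (which controls sector crossings, not the join from an arbitrary point of $\mathbb{D}^{\eta(p)}(0)$ to $\mathcal{C}^p(0)$), it introduces $X^p(z)$, the minimal passage time of a circuit in $A(z;2,3)$, and $Y^p(z)$, the minimal passage time from $\mathcal{C}^p(z)$ to $\partial\Lambda_3(z)$, and uses the deterministic inequality
\[
T_{0,\|\tilde{x}\|_2}^{p,\arg(\tilde{x})}\leq X^p(0)+Y^p(0)+X^p(\tilde{x})+Y^p(\tilde{x})+T(\mathbb{D}^{\eta(p)}(0),\mathbb{D}^{\eta(p)}(x)),
\]
where $\tilde{x}$ has the same argument as $x$ but integer norm and $\|x-\tilde{x}\|_2\leq 1/2$ (so that $\mathcal{C}^p(\tilde{x})\subset\mathbb{D}_{1/2}(\tilde{x})\subset\mathbb{D}(x)$). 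The circuit forces both the disk-to-disk geodesic and the $Y^p$-path to meet it, yielding the concatenation; $X^p$ is bounded via Lemma~\ref{l24} and $Y^p$ via the argument behind~\eqref{e11}. This circuit-plus-exit device simultaneously handles the non-integer radius (through $\tilde{x}$) and the arbitrary landing point of the geodesic in $\mathbb{D}(x)$, avoiding the separate rounding step you describe. Your Lemma~\ref{l13} citation should also be the discrete analogue (cf.\ the proof of Lemma~\ref{l37}), but this is cosmetic.
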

\begin{proof}
It suffices to prove the lemma in the case $y=0$.  Since the law of $\mathscr{C}$ is invariant under rotations by Theorem \ref{t3}, for any fixed $n$ the passage times $T_{0,n}^{\theta}$ for $\theta\in[0,2\pi]$ have the same distribution.  This and Proposition \ref{p2} imply that for each
$\epsilon\in(0,\nu)$ and $\delta>0$, there exists an integer
$N_1=N_1(\epsilon,\delta)\geq 10$ such that for all $\theta\in[0,2\pi]$ and all integers $n\geq N_1$,
\begin{equation}\label{e101}
\mathbf{P}^{\infty}[T_{0,n}^{\theta}\leq(\nu-\epsilon/2)n]\leq\delta/4.
\end{equation}

Combining (\ref{e101}) and Lemma \ref{l29}, we get that for each
$\epsilon\in(0,\nu)$ and $\delta>0$, there exists $N_1\geq 10$ such that
for any integers $m\geq N_1$,
there exists $p_0=p_0(\epsilon,\delta,m)\in(0,p_c)$ such that for
all $p\in(p_0,p_c)$, $\theta\in[0,2\pi]$ and $n\in\mathbb{N}$ with ${N_1\leq n\leq m}$,
\begin{equation}\label{e102}
\mathbf{P}_p^{\eta}[T_{0,n}^{p,\theta}\leq(\nu-\epsilon/2)n]\leq
\delta/2.
\end{equation}

For $x\in\mathbb{C}$ and $p\in(p(10),p_c)$, we write
\begin{align*}
&X^p(x)(\omega_p^{\eta}):=\inf\{T(\gamma):\gamma\mbox{ is a
circuit surrounding $x$ in $A(x;2,3)$}\},\\
&Y^p(x)(\omega_p^{\eta}):=\inf\{T(\gamma):\gamma\mbox{ is a path
from a site in $\mathcal {C}^p(x)$ to a site outside
$\Lambda_3(x)$}\}.
\end{align*}

It is clear that for each $x\in\mathbb{C}$ with $|x|\geq 10$,
there is $\tilde{x}\in\mathbb{C}$ such that
$\arg(\tilde{x})=\arg(x)$, $|x-\tilde{x}|\leq 1/2$ and
$|\tilde{x}|$ is an integer.  Suppose $p\in(p(10),p_c)$.  Then we have
$\Lambda_2(\tilde{x})\supset\mathbb{D}(x)\supset\mathbb{D}_{1/2}(\tilde{x})\supset\mathcal
{C}^{p}(\tilde{x})$.  Observe that for all $x\in\mathbb{C}$ with $|x|\geq
10$,
\begin{equation}\label{e103}
T_{0,|\tilde{x}|}^{p,\arg(\tilde{x})}\leq
X^p(0)+Y^p(0)+X^p(\tilde{x})+Y^p(\tilde{x})+T(\mathbb{D}^{\eta}(0),\mathbb{D}^{\eta}(x)).
\end{equation}
Applying Lemma \ref{l24}, it is easy to obtain that for each
$\delta>0$, there is $N_2=N_2(\delta)\geq 1$ such that for all
$x\in\mathbb{C}$ and $p\in(p(10),p_c)$,
\begin{equation}\label{e104}
\mathbf{P}_p^{\eta}[X^p(x)\geq N_2]\leq\delta/10.
\end{equation}
Using a similar but simpler argument as in the proof of (\ref{e11}),
one obtains that for each $\delta>0$, there is
$N_3=N_3(\delta)\geq 1$ such that for all $x\in\mathbb{C}$ and
$p\in(p(10),p_c)$,
\begin{equation}\label{e105}
\mathbf{P}_p^{\eta}[Y^p(x)\geq N_3]\leq\delta/10.
\end{equation}
Therefore, for each $\epsilon\in(0,\nu)$ and $\delta>0$, there is an
integer $N=N(\epsilon,\delta)=\max\{N_1,\lceil10N_2/\epsilon\rceil,\lceil10N_3/\epsilon\rceil\}$,
such that for any integers $n$ and $m$ with $N\leq n\leq m$,
there exists $p_0=p_0(\epsilon,\delta,m)\in(0,p_c)$ such that for
all $p\in(p_0,p_c)$ and $x\in\mathbb{C}$ with $n\leq |x|\leq m$,
\begin{align*}
&\mathbf{P}_p^{\eta}[T(\mathbb{D}^{\eta}(0),\mathbb{D}^{\eta}(x))\leq
(\nu-\epsilon)n]\\
&\leq\mathbf{P}_p^{\eta}[T_{0,|\tilde{x}|}^{p,\arg(\tilde{x})}-
X^p(0)-Y^p(0)-X^p(\tilde{x})-Y^p(\tilde{x})\leq
(\nu-\epsilon)n]\quad\mbox{by
(\ref{e103})}\\
&\leq\mathbf{P}_p^{\eta}[X^p(0)\geq\epsilon
n/10]+\mathbf{P}_p^{\eta}[Y^p(0)\geq\epsilon
n/10]+\mathbf{P}_p^{\eta}[X^p(\tilde{x})\geq\epsilon
n/10]+\mathbf{P}_p^{\eta}[Y^p(\tilde{x})\geq\epsilon
n/10]\\
&\quad{}~+\mathbf{P}_p^{\eta}[T_{0,|\tilde{x}|}^{p,\arg(\tilde{x})}\leq
(\nu-\epsilon/2)n]\\
&\leq\delta\quad\mbox{by (\ref{e104}), (\ref{e105}) and
(\ref{e102})},
\end{align*}
which ends the proof of the lemma.
\end{proof}

The following lemma gives estimates for the line-to-line passage times.  It
is similar to part (a) of Theorem 2.1 in \cite{GK84}.
\begin{lemma}\label{l23}
For each $\epsilon\in(0,\nu),\delta>0$ and $C\geq 1$, there is an integer
$N=N(\epsilon,\delta,C)\geq 3$ such that for any integers $n$ and $m$ with
$N\leq n\leq m\leq Cn$, there exists
$p_0=p_0(\epsilon,\delta,C,n)\in(0,p_c)$ such that for all $p\in
(p_0,p_c)$, $\theta\in[0,2\pi]$ and $z\in\mathbb{C}$,
\begin{equation*}
\mathbf{P}_p^{\eta}[l_{n,m}^{p,\theta}(z)\leq
n(\nu-\epsilon)]\leq\delta.
\end{equation*}
\end{lemma}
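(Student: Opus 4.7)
The strategy is to lower bound the line-to-line passage time by a minimum over passage times between unit disks on the two transverse sides of the rectangle, and then apply Lemma~\ref{l28} pointwise together with a union bound.

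Fix $\theta\in[0,2\pi]$ and $z\in\mathbb{C}$, and write $R := z + e^{i\theta}([0,n]\times[0,m])$. Place unit-disk centers $z_0,z_1,\ldots,z_I$ at unit spacing along the left side $z + e^{i\theta}(\{0\}\times[0,m])$, and $w_0,w_1,\ldots,w_J$ at unit spacing along the right side $z + e^{i\theta}(\{n\}\times[0,m])$, with $I = J = \lceil m\rceil + 1 \leq Cn+2$. For $p$ close enough to $p_c$ that $\eta(p) < 1/2$, every left-right crossing $(v_0,v_1,\ldots,v_k)$ of $R$ in $\eta(p)\mathbb{T}$ has its first site $v_0$ within unit distance of the left side and its last site $v_k$ within unit distance of the right side, so $v_0\in\mathbb{D}(z_i)$ and $v_k\in\mathbb{D}(w_j)$ for some indices $i,j$. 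This immediately yields
\begin{equation*}
l_{n,m}^{p,\theta}(z)\;\geq\;\min_{0\leq i\leq I,\,0\leq j\leq J}\,T\bigl(\mathbb{D}^{\eta(p)}(z_i),\,\mathbb{D}^{\eta(p)}(w_j)\bigr).
\end{equation*}

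Each pair $(z_i,w_j)$ satisfies $n\leq\|z_i-w_j\|_2\leq\sqrt{1+C^2}\,n$. I apply Lemma~\ref{l28} to every pair with the same $\epsilon$, with error level $\delta_1 := \delta/((I+1)(J+1)) \geq \delta/(Cn+3)^2$, and with its distance parameters equal to $n$ and $\lceil\sqrt{1+C^2}\,n\rceil$; this produces, for $n\geq N_{\ref{l28}}(\epsilon,\delta_1)$ and $p\geq p_{0,\ref{l28}}(\epsilon,\delta_1,\lceil\sqrt{1+C^2}\,n\rceil)$, the individual bound $\mathbf{P}_p^{\eta(p)}[T(\mathbb{D}^{\eta(p)}(z_i),\mathbb{D}^{\eta(p)}(w_j))\leq(\nu-\epsilon)n]\leq\delta_1$. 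A union bound over the $(I+1)(J+1)\leq(Cn+3)^2$ pairs then gives
\begin{equation*}
\mathbf{P}_p^{\eta(p)}[l_{n,m}^{p,\theta}(z)\leq(\nu-\epsilon)n]\;\leq\;(Cn+3)^2\,\delta_1\;=\;\delta,
\end{equation*}
and uniformity in $\theta$ and $z$ is built in since Lemma~\ref{l28} is itself uniform in the pair $x,y$ at fixed distance.

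The main obstacle is that the chosen tail level $\delta_1 = \delta/(Cn+3)^2$ depends on $n$, so the hypothesis $n \geq N_{\ref{l28}}(\epsilon,\delta_1)$ becomes the implicit condition $n\geq N_{\ref{l28}}(\epsilon,\delta/(Cn+3)^2)$. To define the integer threshold $N = N(\epsilon,\delta,C)$ appearing in the statement, I would take $N$ large enough that this self-referential inequality holds for every $n\geq N$; its existence relies on the finiteness of $N_{\ref{l28}}(\epsilon,\cdot)$ at every positive argument and is the only non-routine point in the argument. Once $N$ is so chosen, setting $p_0(\epsilon,\delta,C,n) := p_{0,\ref{l28}}(\epsilon,\delta/(Cn+3)^2,\lceil\sqrt{1+C^2}\,n\rceil)$ completes the proof.
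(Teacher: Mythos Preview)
Your geometric reduction is correct, but the proof has a genuine gap at exactly the point you flag as ``non-routine'': the existence of the threshold $N$. You need $n\ge N_{\ref{l28}}(\epsilon,\delta/(Cn+3)^2)$ for all $n\ge N$, and you justify this only by citing the finiteness of $N_{\ref{l28}}(\epsilon,\cdot)$ at each positive argument. That is not enough. The function $N_{\ref{l28}}(\epsilon,\delta)$ comes (through the proof of Lemma~\ref{l28}) from the threshold $N_1$ in \eqref{e101}, which in turn comes from the almost-sure convergence $T_{0,n}^{\theta}/n\to\nu$ in Proposition~\ref{p2}. That convergence is obtained from the subadditive ergodic theorem and carries no rate whatsoever; nothing in the paper precludes $N_{\ref{l28}}(\epsilon,\delta)$ from growing like $1/\delta$ (or faster) as $\delta\downarrow 0$. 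If it grows as fast as $1/\delta^{1/2}$, your self-referential inequality $n\ge N_{\ref{l28}}(\epsilon,\delta/(Cn+3)^2)$ has no solution for large $n$, and the argument collapses.

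The paper sidesteps this by keeping the number of anchor pairs \emph{independent of $n$}. Instead of unit-width disks at unit spacing (giving $O(n)$ anchors per side), it places anchors at spacing $\epsilon_1 n$ with $\epsilon_1=\epsilon/(3K)$, yielding at most $\lceil 3CK/\epsilon\rceil$ anchors on each side --- a number depending only on $\epsilon$ and $C$. The price is that the optimal crossing need not start or end near an anchor point, so one must connect it to the anchors via short top--bottom crossings of $1\times\epsilon_1 n$ strips; their passage times are controlled by Lemma~\ref{l24} at cost $\le \epsilon n/3$ with high probability. This extra $\epsilon n/3$ on each side is absorbed by weakening $\nu-\epsilon$ to $\nu-\epsilon/3$ in the application of Lemma~\ref{l28}. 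Because the union bound is now over a fixed finite set, the tail level $\delta'$ fed into Lemma~\ref{l28} is a constant in $\epsilon,\delta,C$, and the threshold $N$ is simply $N_{\ref{l28}}(\epsilon/3,\delta')$ --- no self-reference. That scaling of the anchor spacing with $n$ is the missing idea in your approach.
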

\begin{figure}
\begin{center}
\includegraphics[height=0.4\textwidth]{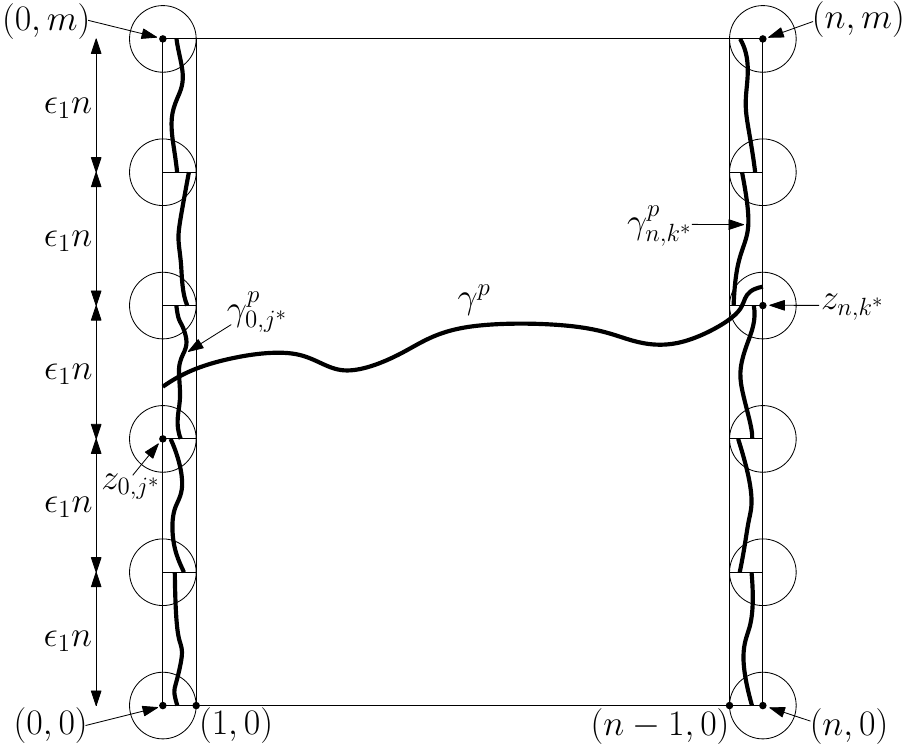}
\caption{The construction appearing in the proof of Lemma \ref{l23}.
}\label{boxcrossing}
\end{center}
\end{figure}
\begin{proof}
For simplicity, we prove Lemma \ref{l23} in the case $\theta=0$ and $z=0$; the proof extends easily to the general case.  Let us sketch the main idea.  As depicted in Figure \ref{boxcrossing}, we choose two sequence of unit balls along the left and right sides of $[0,n]\times[0,m]$, respectively.  We can control
simultaneously on the passage time between any ball in the left to any ball in the right, such that it is not too short; we can also control simultaneously on the top-bottom crossing time of any well-chosen small box which connects two consecutive balls in the left and in the right, such that it is not too long.  From this we deduce that the left-right crossing time of $[0,n]\times[0,m]$ is not too short.

Let $C\geq 1,0<\epsilon<\min\{1,\nu\}$, $\epsilon_1=\epsilon/(3K)$, $2/\epsilon_1\leq n\leq m\leq Cn$ with $n,m\in\mathbb{N}$, $p\in(p(10),p_c)$ and $j,k\in\mathbb{Z}_+$, where $K\geq 2$ is a fixed constant as in Lemma \ref{l24}.  Then, we define
\begin{align*}
X_j^p:=&\inf\{T(\gamma)(\omega_p^{\eta}):\gamma\mbox{ is a top-bottom crossing of $[0,1]\times[j\epsilon_1 n,(j+1)\epsilon_1 n]$}\},\\
Y_k^p:=&\inf\{T(\gamma)(\omega_p^{\eta}):\gamma\mbox{ is a top-bottom crossing of $[n-1,n]\times[k\epsilon_1 n,(k+1)\epsilon_1 n]$}\},\\
Z_{j,k}^p:=&T(\mathbb{D}^{\eta}(z_{0,j}),\mathbb{D}^{\eta}(z_{n,k}))(\omega_p^{\eta})\mbox{
for points $z_{0,j}:=(0,j\epsilon_1 n)$ and $z_{n,k}:=(n,k\epsilon_1
n)$.}
\end{align*}
For $\omega_p^{\eta}$, we let $\gamma^p$ be a left-right crossing of
$[0,n]\times[0,m]$ with $T(\gamma^p)=l_{n,m}^p$, $\gamma_{0,j}^p$ be a top-bottom crossing of
$[0,1]\times[j\epsilon_1 n,(j+1)\epsilon_1 n]$ with $T(\gamma_{0,j}^p)=X_j^p$, and
$\gamma_{n,k}^p$ be a top-bottom crossing of $[n-1,n]\times[k\epsilon_1 n,(k+1)\epsilon_1 n]$ with
$T(\gamma_{n,k}^p)=Y_k^p$.  It is clear that
$\mathbb{D}^{\eta}(z_{0,j})\cap\gamma_{0,j}^p\neq\emptyset$ and
$\mathbb{D}^{\eta}(z_{n,k})\cap\gamma_{n,k}^p\neq\emptyset$.
Moreover, $\gamma^p$ intersects
$\mathbb{D}^{\eta}(z_{0,j^*})$ or $\gamma_{0,j^*}^p$ for some integer
$j^*\in[0,\lceil m/(\epsilon_1 n)\rceil]$, and intersects $\mathbb{D}^{\eta}(z_{n,k^*})$ or $\gamma_{n,k^*}^p$ for
some integer $k^*\in[0,\lceil m/(\epsilon_1 n)\rceil]$.  See Figure \ref{boxcrossing} for an illustration.  Assume that the event
\begin{equation*}
\left(\bigcap_{j=0}^{\lceil m/(\epsilon_1
n)\rceil}\{X_j^p\leq\epsilon
n/3\}\right)\cap\left(\bigcap_{k=0}^{\lceil m/(\epsilon_1
n)\rceil}\{Y_k^p\leq\epsilon n/3\}\right)\cap \{l_{n,m}^p\leq
n(\nu-\epsilon)\}
\end{equation*}
holds.  Then
\begin{equation*}
Z_{j^*,k^*}^p\leq
T(\gamma^p)+T(\gamma_{0,j^*}^p)+T(\gamma_{n,k^*}^p)\leq
n(\nu-\epsilon/3).
\end{equation*}
Note that $m/(\epsilon_1 n)\leq 3CK/\epsilon$ since $n\leq m\leq Cn$
and $\epsilon_1=\epsilon/(3K)$.  Then from the above argument, we
have
\begin{align}
\mathbf{P}_p^{\eta}[l_{n,m}^p\leq
n(\nu-\epsilon)]\leq&\sum_{0\leq j\leq\lceil
3CK/\epsilon\rceil}\mathbf{P}_p^{\eta}[X_j^p\geq\epsilon
n/3]+\sum_{0\leq k\leq\lceil
3CK/\epsilon\rceil}\mathbf{P}_p^{\eta}[Y_k^p\geq \epsilon
n/3]\nonumber\\
&{}+\sum_{j,k\in[0,\lceil
3CK/\epsilon\rceil]}\mathbf{P}_p^{\eta}[Z_{j,k}^p\leq
(\nu-\epsilon/3)n].\label{e36}
\end{align}
Let us bound the three terms on the right side of (\ref{e36}).  By
Lemma \ref{l24}, there exists $N_0=N_0(\epsilon,\delta,C)\geq
2/\epsilon_1$ such that for all $p\in(p(10),p_c)$, $j,k\in\mathbb{Z}_+$ and $n\geq N_0$, we have
\begin{equation}\label{e73}
\mathbf{P}_p^{\eta}[X_j^p\geq\epsilon n/3]\leq
\delta\epsilon/(12CK)\quad\mbox{and}\quad\mathbf{P}_p^{\eta}[Y_k^p\geq
\epsilon n/3]\leq\delta\epsilon/(12CK).
\end{equation}
By Lemma \ref{l28}, there is an integer $N=N(\epsilon,\delta,C)\geq N_0$ such that for all
$n\geq N$, there exists $p_0=p_0(\epsilon,\delta,C,n)\in(0,p_c)$
such that for all $p\in(p_0,p_c)$ and $j,k\in [0,\lceil
3CK/\epsilon\rceil]$,
\begin{equation}\label{e74}
\mathbf{P}_p^{\eta}[Z_{j,k}^p\leq
(\nu-\epsilon/3)n]\leq\delta\epsilon^2/(48C^2K^2).
\end{equation}
Plugging (\ref{e73}) and (\ref{e74}) into (\ref{e36}), we obtain
$\mathbf{P}_p^{\eta}[l_{n,m}^p\leq n(\nu-\epsilon)]\leq\delta$
for all $N\leq n\leq m\leq Cn$ and all $p\in(p_0,p_c)$.
\end{proof}

\subsubsection{Renormalization}\label{renormalization}
We now follow the main lines of the proof of Theorem 3.1 in
\cite{GK84} to prove Lemma \ref{l17}.  For simplicity, we will focus on the case when $u=1$; the proof extends easily to the general case.  The idea is, roughly speaking, a path joining 0 to a point far away from 0 with a ``very short'' passage time should
cross ``many'' mesoscopic boxes which have ``very short'' box-crossing times (i.e., line-to-line passage times), and by
estimates of box-crossing times in Lemma \ref{l23} and a counting argument we show that it is unlikely that such a path exists.  Our notation in this subsection is analogous to that in \cite{GK84} but with a slight difference to fit our context.

\textbf{Step 1.} \emph{Decomposition of a path into segments.}
For each $k=(k_1,k_2)\in\mathbb{Z}^2$ and integers $M,N$ satisfying
$M>N>1$, define the boxes
\begin{align*}
&S(k):=\{z\in\mathbb{R}^2:Nk\leq
z<N(k+(1,1))\},\\
&\widehat{S}(k):=\{z\in\mathbb{R}^2:Nk-(M,M)\leq
z<N(k+(1,1))+(M,M)\}.
\end{align*}
Note that $\widehat{S}(k)$ contains $S(k)$ at its center.  Later, we
shall choose $M$ such that it is much larger than $N$, but much
smaller than $n$.  In the following we assume that $p\in(p(10),p_c)$ and view the sites of
$\eta\mathbb{T}$ as points in $\mathbb{R}^2$.  The boxes $S(k)$'s induce a partition of the sites of $\eta\mathbb{T}$,
and will play the role of ``renormalized sites''.
\begin{figure}
\begin{center}
\includegraphics[height=0.45\textwidth]{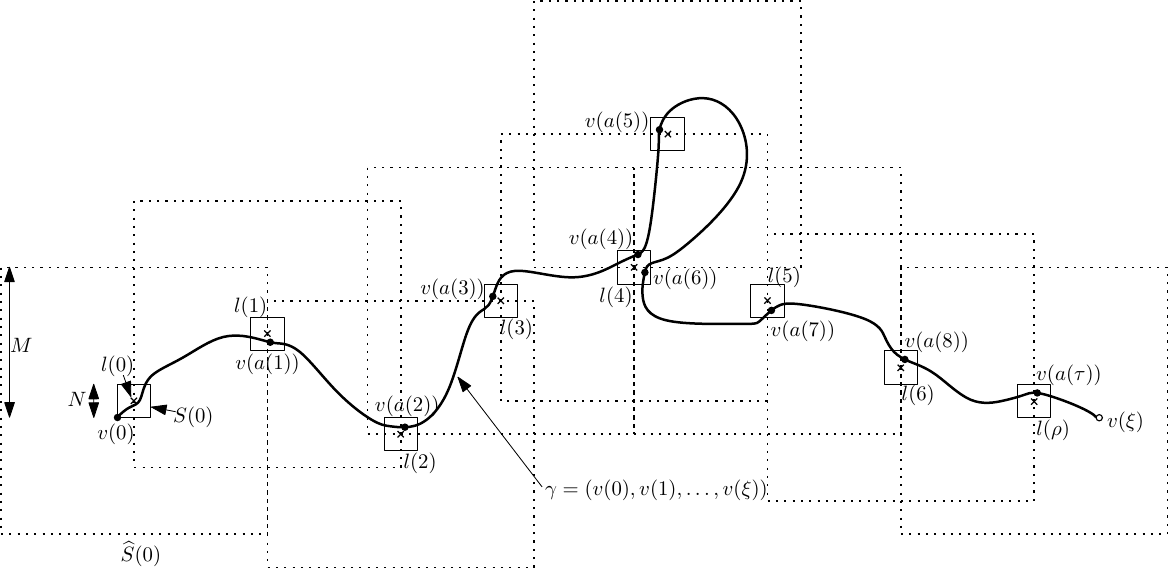}
\caption{The decomposition of a path $\gamma$ into segments in Step 1.  The black dots represent $v(a(i))$'s; the small crosses represent $k(i)$'s.  In this picture, we have $\widetilde{\sigma}=(l(0),\ldots,l(\rho))=(k(0),k(1),k(2),k(3),k(4),k(7),k(8),k(9))$, where $\widetilde{\sigma}$ is the subsequence extracted from $\sigma=(k(0),\ldots,k(\tau))$ by the loop removal process.
}\label{kesten}
\end{center}
\end{figure}

Let $\gamma=(v(0),v(1),\ldots,v(\xi))$ be a path in
$\eta\mathbb{T}$ from $v(0)=0$ to some site $v(\xi)$ in
$\{(z_1,z_2)\in\mathbb{R}^2:z_1\geq n\}$.  We associate to $\gamma$
the following two sequences.  First, let $k(0)=0$ and $a(0)=0$.  Then let
$v(a(1))$ be the first site along $\gamma$ to be outside
$\widehat{S}(k(0))$, and let $k(1)$ be the unique $k$ such that
$v(a(1))\in S(k)$. Continue recursively to find sequences
$(a(0),a(1),\ldots,a(\tau))$ and
$\sigma:=(k(0),k(1),\ldots,k(\tau))$ such that
\begin{enumerate}
\item $0=a(0)<a(1)<\cdots<a(\tau)\leq\xi$,
\item $v(a(i))\in S(k(i))$,
\item $a(i+1)$ is the smallest integer $a$ larger than $a(i)$ such
that $v(a)\notin\widehat{S}(k(i))$.
\end{enumerate}
The final terms $a(\tau)$ and $k(\tau)$ satisfy
\begin{equation*}
v(j)\in\widehat{S}(k(\tau))\quad\mbox{if }a(\tau)\leq j\leq \xi.
\end{equation*}
By the classical ``loop removal'' process described in \cite{GK84}, we obtain a subsequence $\widetilde{\sigma}$ of $\sigma$ which is free
of double points:
\begin{equation*}
\widetilde{\sigma}:=(l(0),\ldots,l(\rho)),
\end{equation*}
where $l(i)=k(j_i)$ for $i=0,1,\ldots,\rho$ and $0=j_0<j_1<\cdots<j_{\rho}\leq\tau$.  See Figure \ref{kesten} for the construction.  Note
that although $j_{\rho}$ and $\tau$ may not be equal, it is always true
that $k(j_{\rho})=k(\tau)$.  By construction,
\begin{equation*}
\|k(j+1)-k(j)\|_{\infty}\leq\frac{M}{N}+1\quad\mbox{for $j=0,1,\ldots,\tau-1$}.
\end{equation*}
This property is preserved by the loop removal process, in that
\begin{equation*}
\|l(j+1)-l(j)\|_{\infty}\leq\frac{M}{N}+1\quad\mbox{for $j=0,1,\ldots,\rho-1$}.
\end{equation*}

Consider the portion $\gamma(i):=(v(a(i-1)),\ldots,v(a(i)))$ of
$\gamma$ which stretches between $S(k(i-1))$ and $S(k(i))$, and
define
\begin{equation*}
L(i):=\|v(a(i))-v(a(i-1))\|_{\infty}.
\end{equation*}
By construction,
\begin{equation}\label{e75}
M\leq L(i)<M+N+1\quad\mbox{for }1\leq i\leq \tau.
\end{equation}
(A stronger version of
(\ref{e75}) holds: $M\leq L(i)\leq M+N+\eta$ for $1\leq i\leq
\tau$.)  Not that (\ref{e75}) is slightly different from the corresponding
equation (3.5) ``$M\leq L(i)\leq M+N$'' in \cite{GK84} for a bond FPP on $\mathbb{Z}^2$.

\textbf{Step 2.} \emph{Coloring of points and properties of white points.}
Let $0<\epsilon<\nu/2$.  For $1\leq i\leq \rho$, consider the point
$l(i)=k(j_i)\in\mathbb{Z}^2$ and the portion $\gamma(j_i)$
stretching between the two boxes $S(k(j_i-1))$ and $S(k(j_i))$.  We color $l(i)$
\textbf{white} if
\begin{equation*}
T(\gamma(j_i))\leq(\nu-2\epsilon)L(j_i);
\end{equation*}
otherwise we color $l(i)$ \textbf{black}.  Denote by $w$ the number of
white points in the sequence $(l(1),\ldots,l(\rho))$.  The next
lemma gives estimates for $w$ and $\rho$ in terms of $\epsilon,n,M,N$, and corresponds to Lemma 3.5 of \cite{GK84}.
\begin{lemma}[Number of white points associated to a fast path]\label{l25}
Suppose that $p\in(p(10),p_c)$ and $\gamma$ is
a path in $\eta\mathbb{T}$ from $0$ to some site in
$\{(z_1,z_2)\in\mathbb{R}^2:z_1\geq n\}$.  Suppose that
$\epsilon,n,M,N$ and $\gamma$ satisfy the following:
\begin{enumerate}[(i)]
\item $\epsilon$ is small: $0<\epsilon<\nu/5$,\label{item1}
\item $M/N$ is large: $M(\nu-3\epsilon)\geq
(M+N+1)(\nu-4\epsilon)$,\label{item2}
\item $n$ is large: $n\epsilon\geq
(M+2N)(\nu-4\epsilon)$,\label{item3}
\item the passage time of $\gamma$ is small: $T(\gamma)\leq
n(\nu-5\epsilon)$.\label{item4}
\end{enumerate}
Then we have
\begin{equation*}
w\geq \frac{\epsilon\rho}{2\nu}\quad\mbox{and}\quad\rho\geq
\frac{n}{M+N}-1.
\end{equation*}
\end{lemma}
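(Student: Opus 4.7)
The plan is to prove the two claimed inequalities of Lemma \ref{l25} separately, both by direct counting.

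For the lower bound $\rho \geq n/(M+N)-1$, I would argue purely geometrically. From (\ref{e76}), $\|l(i+1)-l(i)\|_\infty \leq M/N+1$, so since $l(0)=0$ we have $\|l(\rho)\|_\infty \leq \rho(M/N+1)$. The endpoint $v(\xi)\in\widehat{S}(l(\rho))$ has first coordinate at least $n$, and $\widehat{S}(l(\rho))$ is an enlargement by $M$ of the $N\times N$ box anchored at $Nl(\rho)$, so
\[
n \;\leq\; N(l(\rho))_1 + (M+N) \;\leq\; N\rho(M/N+1) + (M+N) \;=\; (\rho+1)(M+N),
\]
giving the claim.

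For the lower bound on $w$, I would compare $T(\gamma)$ to the sum of passage times of the $\rho$ selected subpaths. Since $j_1<\cdots<j_\rho$, the intervals $[a(j_i-1),a(j_i)]$ are pairwise non-overlapping except possibly at their endpoints, so consecutive subpaths $\gamma(j_i)$ share at most one vertex, whose weight $t(v)\in\{0,1\}$ is overcounted by at most $1$ in $\sum_i T(\gamma(j_i))$. Thus
\[
T(\gamma) \;\geq\; \sum_{i=1}^{\rho} T(\gamma(j_i)) - \rho.
\]
By definition of ``black'', $T(\gamma(j_i)) > (\nu-2\epsilon)L(j_i) \geq (\nu-2\epsilon)M$ whenever $l(i)$ is black, while white indices contribute at least $0$. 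Using hypothesis (\ref{item4}) together with $n\leq(\rho+1)(M+N)$ from the first step gives
\[
(\rho - w)(\nu-2\epsilon) M \;\leq\; (\rho+1)(M+N)(\nu-5\epsilon) + \rho.
\]

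The decisive algebraic input is that hypothesis (\ref{item2}) rearranges to $(\nu-2\epsilon)M \geq (M+N+1)(\nu-4\epsilon) + \epsilon M$ and hence produces the key gap $(\nu-2\epsilon)M - (M+N)(\nu-5\epsilon) \geq \epsilon M$. Substituting this into the previous display and solving for $w$ yields
\[
w \;\geq\; \frac{\epsilon\,\rho}{\nu-2\epsilon} \;-\; \frac{(M+N)(\nu-5\epsilon) + \rho}{(\nu-2\epsilon)M},
\]
whose leading term is $\epsilon\rho/\nu$. The main obstacle is a slightly delicate bookkeeping at this final step: one must confirm that hypothesis (\ref{item3}) — combined with $\rho \geq n/(M+N)-1$ from the first part — forces $\rho$ to be large enough that the two lower-order corrections $(M+N)(\nu-5\epsilon)/[(\nu-2\epsilon)M]$ and $\rho/[(\nu-2\epsilon)M]$ are absorbed into the slack $\epsilon/\nu - \epsilon/(2\nu) = \epsilon/(2\nu)$, yielding the stated bound $w \geq \epsilon\rho/(2\nu)$. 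The factor $1/2$ in the conclusion is precisely what provides the room to swallow both the overlap term $-\rho$ from the passage-time decomposition and the boundary term $(M+N)$ from the displacement estimate.
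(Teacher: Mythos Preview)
Your approach is correct and mirrors the Grimmett--Kesten argument that the paper invokes (the paper omits its own proof entirely, citing \cite{GK84}). The displacement bound for $\rho$ is clean and correct as written.

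For the bound on $w$, your decomposition and use of hypothesis~(\ref{item2}) to extract the gap $(\nu-2\epsilon)M-(M+N)(\nu-5\epsilon)\geq \epsilon M$ is exactly the right mechanism; in fact the gap is a bit larger, namely $3M\epsilon-N(\nu-5\epsilon)>2M\epsilon$ once one uses $N(\nu-4\epsilon)<M\epsilon$ from~(\ref{item2}), and this extra room is what makes the final absorption work. Two remarks on the bookkeeping you flag:
\begin{itemize}
\item The overlap correction $-\rho$ is a genuine extra term here compared with \cite{GK84}, which treats \emph{bond} FPP where consecutive subpaths share a vertex but no edges and hence $T(\gamma)\geq\sum_i T(\gamma(j_i))$ without correction. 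In the present site model your bound $T(\gamma)\geq\sum_i T(\gamma(j_i))-(\rho-1)$ is the right fix, and the paper's ``essentially the same'' glosses over this.
\item Your stated slack should read $\dfrac{\epsilon}{\nu-2\epsilon}-\dfrac{\epsilon}{2\nu}$ rather than $\dfrac{\epsilon}{\nu}-\dfrac{\epsilon}{2\nu}$; the former is strictly larger, so this only helps. With the sharper gap $1-r>2\epsilon/(\nu-2\epsilon)$ and the lower bound $\rho>(\nu-5\epsilon)/\epsilon$ coming from~(\ref{item3}) via your displacement estimate, the two lower-order terms (the boundary $r$ and the overlap $\rho/[(\nu-2\epsilon)M]$) are absorbed; the computation is routine but does use all of (\ref{item1})--(\ref{item3}) in concert, which is why each hypothesis is there.
\end{itemize}
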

\begin{proof}
Note that (\ref{item1}), (\ref{item3}) and (\ref{item4}) above are
the same as those of Lemma 3.5 in \cite{GK84}, and (\ref{item2}) is
slightly different from the corresponding item of that lemma since (\ref{e75}) is slightly
different from equation (3.5) in \cite{GK84}.  The proof of Lemma
\ref{l25} is essentially the same as that of Lemma 3.5 in
\cite{GK84}.  The details are omitted.
\end{proof}

The following lemma is an analog of Lemma 3.6 in \cite{GK84}.  It says that a point is white implies that an annulus-crossing time is ``short'', and gives an upper bound on the probability that some given points are white, using the local dependence of white points.
\begin{lemma}[Local dependence of white points]\label{l26}
Suppose that $p\in(p(10),p_c)$ and $\gamma$ is
a path in $\eta\mathbb{T}$ from $0$ to some site in
$\{(z_1,z_2)\in\mathbb{R}^2:z_1\geq n\}$.  For any $1\leq i\leq\rho$, the event
$\{l(i)\mbox{ is white}\}$ is contained in the event
\begin{equation*}
\mathcal {E}^p(i):=\left\{
\begin{aligned}
&\mbox{a site in $S(k(j_i))$ is joined to a site outside the square}\\
&\mbox{$\{z\in \mathbb{R}^2: Nk(j_i)-(M-N,M-N)\leq
z<Nk(j_i)+(M,M)\}$ by}\\
&\mbox{a path in $\eta\mathbb{T}$ with passage time less than $(M+N+1)(\nu-2\epsilon)$}
\end{aligned}
\right\}.
\end{equation*}
For any subset $\mathscr{S}$ of $\{l(1),\ldots,l(\rho)\}$ we have
\begin{equation*}
\mathbf{P}_p^{\eta}[\mbox{all points in $\mathscr{S}$ are
white}]\leq\mathbf{P}_p^{\eta}[\mathcal {E}^p(i)\mbox{ occurs
for each }l(i)\in\mathscr{S}]\leq\phi^{\alpha|\mathscr{S}|},
\end{equation*}
where
\begin{align*}
\phi=\phi(M,N,\epsilon,p):=\sup_{k\in\mathbb{Z}^2}\mathbf{P}_p^{\eta}\left[
\begin{aligned}
&\mbox{a site in $S(k)$ is joined to a site
outside the square}\\
&\mbox{$\{z\in \mathbb{R}^2: Nk-(M-N,M-N)\leq z<Nk+(M,M)\}$ by}\\
&\mbox{a path in $\eta\mathbb{T}$ with passage time less than $(M+N+1)(\nu-2\epsilon)$}
\end{aligned}
\right],
\end{align*}
and $\alpha=\alpha(M,N):=\left(\frac{N}{8(M+N)}\right)^2$.
\end{lemma}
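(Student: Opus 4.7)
The plan is to adapt Lemma 3.6 of Grimmett--Kesten \cite{GK84} to the present site-percolation FPP on $\eta(p)\mathbb{T}$, splitting the argument into two independent pieces: the event-containment $\{l(i)\text{ is white}\}\subseteq\mathcal{E}^p(i)$, and the product bound $\phi^{\alpha|\mathscr{S}|}$ obtained by sparsifying $\mathscr{S}$ to extract a collection of spatially independent sub-events.

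First I would establish the containment. On the event that $l(i)$ is white, the sub-path $\gamma(j_i)=(v(a(j_i-1)),\ldots,v(a(j_i)))$ of $\gamma$ has passage time at most $(\nu-2\epsilon)L(j_i)\leq (M+N+1)(\nu-2\epsilon)$ by definition of whiteness together with (\ref{e75}), and its endpoint $v(a(j_i))$ lies in $S(k(j_i))$ by construction. It remains to verify that the other endpoint $v(a(j_i-1))$ lies outside the square $B_i:=\{z:Nk(j_i)-(M-N,M-N)\leq z<Nk(j_i)+(M,M)\}$. Since $v(a(j_i))-Nk(j_i)\in[0,N)^2$ and $\|v(a(j_i))-v(a(j_i-1))\|_\infty=L(j_i)\geq M$, at least one coordinate of $v(a(j_i-1))-Nk(j_i)$ is either $\geq M$ or $\leq -(M-N)$, placing $v(a(j_i-1))$ outside $B_i$. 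Reversing $\gamma(j_i)$ yields a witnessing path for $\mathcal{E}^p(i)$, giving the first inequality of the lemma by monotonicity.

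For the product bound, the key observation is that any path realizing $\mathcal{E}^p(i)$ can be truncated at its first exit from $B_i$ without increasing its passage time; consequently $\mathcal{E}^p(i)$ depends only on the state of $\eta(p)\mathbb{T}$-sites within an $\eta(p)$-neighbourhood of $\overline{B}_i$. The points $\{k(j_i):l(i)\in\mathscr{S}\}\subset\mathbb{Z}^2$ are pairwise distinct because $\widetilde{\sigma}$ is loop-free. Partition $\mathbb{Z}^2$ by the coordinatewise-modular colouring $k\mapsto k\bmod c$ with $c:=\lceil 2M/N\rceil+1$; two distinct points sharing a colour lie at $\ell^{\infty}$-distance $\geq c$. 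By pigeonhole some colour class contains at least $|\mathscr{S}|/c^{2}$ of these points, yielding a sub-collection $\mathscr{S}'\subseteq\mathscr{S}$ with $|\mathscr{S}'|\geq\alpha|\mathscr{S}|$: indeed $c\leq 2M/N+2\leq 4(M+N)/N$, so $c^{-2}\geq(N/(4(M+N)))^2\geq\alpha$, with the factor of $4$ absorbing boundary and index corrections. For any two distinct $l(i),l(i')\in\mathscr{S}'$ the squares $B_i$ and $B_{i'}$ are disjoint with $\ell^{\infty}$-separation at least $Nc-(2M-N)\geq N$, hence for $p$ close enough to $p_c$ (so that $\eta(p)\ll N$) their dependence regions are disjoint and the corresponding events are independent. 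Since each event has probability at most $\phi$ by the definition of $\phi$ as a supremum over $k$, independence gives
\[
\mathbf{P}_p^{\eta(p)}\Bigl[\bigcap_{l(i)\in\mathscr{S}}\mathcal{E}^p(i)\Bigr]\leq\mathbf{P}_p^{\eta(p)}\Bigl[\bigcap_{l(i)\in\mathscr{S}'}\mathcal{E}^p(i)\Bigr]=\prod_{l(i)\in\mathscr{S}'}\mathbf{P}_p^{\eta(p)}[\mathcal{E}^p(i)]\leq\phi^{|\mathscr{S}'|}\leq\phi^{\alpha|\mathscr{S}|}.
\]

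The hardest point will be the careful bookkeeping in the sparsification step: verifying that the modular colouring truly produces pairwise disjoint dependence regions (including the $\eta(p)$-margin required because of the discretisation and the one-site overshoot beyond $\partial B_i$), and checking that the resulting constants match the stated $\alpha=(N/(8(M+N)))^2$. The geometric verification for the containment step and the structural observation that $\mathcal{E}^p(i)$ is witnessed by a path confined to $\overline{B}_i$ are elementary.
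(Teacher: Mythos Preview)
Your proposal is correct and follows the same approach as the paper, which simply defers to Lemma~3.6 of \cite{GK84} without details; your event-containment step and modular-colouring sparsification reproduce that argument faithfully with the right constants. One small remark: you need not restrict to ``$p$ close enough to $p_c$'' for the independence step, since the standing hypothesis $p\in(p(10),p_c)$ already gives $\eta(p)\leq 1/10<1\leq N$, and your own computation shows the $\ell^\infty$-gap between distinct boxes in $\mathscr{S}'$ is at least $2N$, comfortably absorbing the $\eta(p)$-overshoot.
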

\begin{proof}
The proof is basically the same as for Lemma 3.6 in
\cite{GK84}.  The details are omitted.
\end{proof}
The next lemma corresponds to Lemma 3.7 in \cite{GK84}, with a
slight modification adapted to our setting.  It says that, the probability that a point is white can be made arbitrarily small by means of a suitable choice of $N$ and $M$, uniformly in $p$ close to $p_c$.  This lemma is a key ingredient for the counting argument in the final proof.
\begin{lemma}[Control of the probability that a point is white]\label{l27}
For each $\epsilon\in(0,\nu/5)$, $\delta>0$ and $C>2\nu/\epsilon$,
there is an integer $N=N(\epsilon,\delta,C)\geq\nu/\epsilon$ and a
$p_0=p_0(\epsilon,\delta,C,N)\in(p(10),p_c)$, such that for each
$p\in(p_0,p_c)$ and each integer $M\in[2\nu N/\epsilon, CN]$,
\begin{equation*}
\phi=\phi(M,N,\epsilon,p)\leq\delta.
\end{equation*}
\end{lemma}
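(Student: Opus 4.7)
The plan is to bound $\phi(M,N,\epsilon,p)$ by decomposing the defining event according to which side of the enlarged box is first exited, and then converting each resulting crossing event into a line-to-line passage time estimate controlled by Lemma \ref{l23}.

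Fix $k\in\mathbb{Z}^2$. Any path $\gamma$ from $S(k)$ to the complement of $\{z:Nk-(M-N,M-N)\leq z<Nk+(M,M)\}$ must first exit through one of the four sides of this enlarged square. Consider the case of first exit through the left side. Let $u$ be the last site of $\gamma$ whose first coordinate is at least $Nk_1$, and let $\gamma'$ be the sub-path of $\gamma$ from $u$ to the first site of $\gamma$ strictly outside the enlarged square. The intermediate sites of $\gamma'$ lie in the enlarged square (by choice of the exit site as the terminus of $\gamma'$) and have first coordinate smaller than $Nk_1$ (by choice of $u$), hence are contained in the rectangle $R:=[Nk_1-(M-N),Nk_1]\times[Nk_2-(M-N),Nk_2+M)$; meanwhile $u$ is adjacent to the right side of $R$ and the final site of $\gamma'$ is adjacent to its left side. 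Thus $\gamma'$ is a left-right crossing of $R$ in the sense of the definition preceding Lemma \ref{l24}, so
\[
l^{p,0}_{M-N,\,2M-N}(z_R)\leq T(\gamma')\leq T(\gamma)\leq(M+N+1)(\nu-2\epsilon),
\]
where $z_R$ is the bottom-left corner of $R$. The three other exit directions produce analogous bounds on $l^{p,\theta}_{M-N,\,2M-N}$ with $\theta\in\{\pi/2,\pi,3\pi/2\}$.

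Next I would verify that $(M+N+1)(\nu-2\epsilon)\leq(M-N)(\nu-\epsilon)$, so that Lemma \ref{l23} applies with its natural gap parameter $\epsilon$. A direct expansion yields
\[
(M-N)(\nu-\epsilon)-(M+N+1)(\nu-2\epsilon)=M\epsilon-(2N+1)\nu+(3N+2)\epsilon,
\]
and the standing hypotheses $N\geq\nu/\epsilon$ and $M\geq 2\nu N/\epsilon$ give $M\epsilon\geq 2\nu N$ and $3N\epsilon\geq 3\nu$, so the right-hand side is bounded below by $2\nu>0$. Moreover, since $(2M-N)/(M-N)=1+M/(M-N)$ is decreasing in $M$, the ratio is bounded above by its value at $M=2\nu N/\epsilon$, namely $C':=2+\epsilon/(2\nu-\epsilon)$, uniformly for $M\in[2\nu N/\epsilon,\,CN]$.

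To conclude, I apply Lemma \ref{l23} with parameters $(\epsilon,\delta/4,C')$ to obtain an integer $N_0=N_0(\epsilon,\delta,C)$ such that for every integer $n\geq N_0$ and every integer $m$ with $n\leq m\leq C'n$, there exists $p_0\in(0,p_c)$ (depending on $\epsilon,\delta,C,n$) with $\mathbf{P}_p^{\eta(p)}[l^{p,\theta}_{n,m}(z)\leq n(\nu-\epsilon)]\leq\delta/4$ for all $p\in(p_0,p_c)$, uniformly in $\theta\in[0,2\pi]$ and $z\in\mathbb{C}$. Setting $N:=\max\{\lceil\nu/\epsilon\rceil,\,\lceil N_0/(2\nu/\epsilon-1)\rceil\}$ ensures $M-N\geq N_0$ for every admissible $M\in[2\nu N/\epsilon,CN]$, and a union bound over the four exit directions then yields $\phi(M,N,\epsilon,p)\leq\delta$. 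The main obstacle I anticipate is the bookkeeping: confirming rigorously that $\gamma'$ is a bona fide left-right crossing of $R$ (which hinges on the correct choice of the cut point $u$ and on the confinement of intermediate sites within the enlarged square) and tracking the various parameter dependences so that the quantifier order in the final statement is preserved.
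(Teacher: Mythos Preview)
Your proposal is correct and follows essentially the same route as the paper: decompose according to the side through which the path first exits the enlarged box, extract a left-right crossing of a rectangle of dimensions $(M-N)\times(2M-N)$, verify the arithmetic inequality $(M+N+1)(\nu-2\epsilon)\leq(M-N)(\nu-\epsilon)$ under the hypotheses $N\geq\nu/\epsilon$ and $M\geq 2\nu N/\epsilon$, and conclude via Lemma~\ref{l23}. The only cosmetic differences are that the paper rounds the aspect ratio up to $3$ (using $2M-N\leq 3(M-N)$) rather than computing your sharper $C'=2+\epsilon/(2\nu-\epsilon)$, and that you spell out the construction of the crossing sub-path $\gamma'$ explicitly; one small point worth tightening is that your ``last site of $\gamma$ with first coordinate at least $Nk_1$'' should be taken along $\gamma$ truncated at its first exit, and the single $p_0$ valid for all admissible $M$ is obtained by taking the maximum over the finitely many integers $M-N\in[2\nu N/\epsilon-N,\,CN-N]$.
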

\begin{proof}
The proof is analogous to the proof of Lemma 3.7 in \cite{GK84}. Let
$B_1,B_2,B_3,B_4$ denote the following boxes:
\begin{align*}
&B_1=[-M+N,M]\times[N,M],\qquad\quad B_2=[N,M]\times[-M+N,M],\\
&B_3=[-M+N,M]\times[-M+N,0],\quad B_4=[-M+N,0]\times[-M+N,M].
\end{align*}
For each $k\in\mathbb{Z}^2$, if a site in $S(k)$ is joined to a site outside the square $\{z\in
\mathbb{R}^2: Nk-(M-N,M-N)\leq z<Nk+(M,M)\}$ by a path in
$\eta\mathbb{T}$ with passage time not exceeding
$(M+N+1)(\nu-2\epsilon)$, then one of the boxes $Nk+B_j$, $j\in\{1,2,3,4\}$, is
crossed between its longer sides by a path with passage time not
exceeding $(M+N+1)(\nu-2\epsilon)$.  Thus
\begin{equation*}
\phi(M,N,\epsilon,p)\leq
4\sup_{k\in\mathbb{Z}^2,\theta\in\{0,\pi/2\}}\mathbf{P}_p^{\eta}\left[l_{M-N,2M-N}^{p,\theta}(k)\leq
(M+N+1)(\nu-2\epsilon)\right].
\end{equation*}
If $0<\epsilon<\nu/5$, $N\geq\nu/\epsilon$ and $M\geq 2\nu
N/\epsilon$, then
\begin{equation*}
2M-N\leq 3(M-N)\quad\mbox{and}\quad(M+N+1)(\nu-2\epsilon)\leq
(M-N)(\nu-\epsilon),
\end{equation*}
giving that
\begin{equation*}
\phi(M,N,\epsilon,p)\leq
4\sup_{k\in\mathbb{Z}^2,\theta\in\{0,\pi/2\}}\mathbf{P}_p^{\eta}\left[l_{M-N,3(M-N)}^{p,\theta}(k)\leq
(M-N)(\nu-\epsilon)\right].
\end{equation*}
Applying Lemma \ref{l23} we obtain the desired result immediately.
\end{proof}
\textbf{Step 3.} \emph{A counting argument, end of the proof.}
Now we are ready to prove Lemma \ref{l17} by using the preparatory lemmas in the previous step.
\begin{proof}[Proof of Lemma \ref{l17}]
We shall prove the lemma in the case $u=1$; the
proof extends easily to the general case by using the rotated lattice $u\cdot\mathbb{Z}^2$.  We will follow the
lines of the proof of Theorem 3.1 in \cite{GK84}, which is based on
a counting argument.

Recall that each path $\gamma$ in $\eta\mathbb{T}$ from $0$ to a
site in $\{(z_1,z_2)\in\mathbb{R}^2:z_1\geq n\}$ has an associated
sequence $l(0),\ldots,l(\rho)$.  It is easy to see that the number of possible
choices for this sequence is at most $(8(\frac{M}{N}+1))^{\rho}$.
Given the sequence of $l$'s, there are $\binom{\rho}{w}$ ways of choosing a set of cardinality $w$ as the
white points.  Suppose that $\epsilon\in(0,\nu/5)$, and let $\delta$
be a fixed constant satisfying $\delta\in(0,1)$ and
$48\nu\delta^{\epsilon^3/(1152\nu^3)}<\epsilon$.  By Lemma
\ref{l27}, we can choose fixed $M,N\in\mathbb{N}$ and
$p_0\in(p(10),p_c)$ such that
\begin{equation}\label{e106}
N\geq \nu/\epsilon,\quad 2\nu N\leq M\epsilon\leq (3\nu-\epsilon)N\quad\mbox{and}\quad
\phi(M,N,\epsilon,p)\leq\delta\quad\mbox{for all }p\in(p_0,p_c).
\end{equation}
Then (\ref{item1}) and (\ref{item2}) of Lemma \ref{l25} hold.
Suppose also that $n$ is large enough for (\ref{item3}) of Lemma
\ref{l25} to hold; by (\ref{item1}) and (\ref{item3}), we have
$n\geq M+2N$, giving by Lemma \ref{l25} that $\rho\geq Kn$ where
\begin{equation*}
K=K(M,N):=\frac{N}{(M+N)(M+2N)}.
\end{equation*}
The point-to-line passage time $b_{0,n}^p$ is defined by
\begin{equation*}
b_{0,n}^p:=\inf\{T(\gamma)(\omega_p^{\eta}):\mbox{$\gamma$ is a path in
$\eta\mathbb{T}$ from 0 to a site in
$\{(z_1,z_2)\in\mathbb{R}^2:z_1\geq n\}$}\}.
\end{equation*}
Then, by (\ref{e106}), Lemmas \ref{l25} and \ref{l26}, for all
$p\in(p_0,p_c)$ and all $n$ satisfying (\ref{item3}) of Lemma
\ref{l25}, we have
\begin{equation}\label{e107}
\mathbf{P}_p^{\eta}[b_{0,n}^p<n(\nu-5\epsilon)]\leq\sum_{\rho\geq
Kn}\left(8\left(\frac{M}{N}+1\right)\right)^{\rho}\sum_{w\geq
\epsilon\rho/(2\nu)}\binom{\rho}{w}\delta^{\alpha w},
\end{equation}
where $\alpha=\alpha(M,N)=\left(\frac{N}{8(M+N)}\right)^2$.
By an easy calculation (see the details at the end of the
proof of Theorem 3.1 in \cite{GK84}), we obtain from (\ref{e107})
that there are constants $K_1>0$ and $\delta_1\in(0,1)$, independent of
$n$ and $p$, such that
\begin{equation*}
\mathbf{P}_p^{\eta}[b_{0,n}^p<n(\nu-5\epsilon)]\leq
K_1\delta_1^n.
\end{equation*}
Hence, for each $p\in(p_0,p_c)$, we have $L(p)\mu(p)\geq
\nu-5\epsilon$ since $a_{0,m}/m$ tends to $\mu(p)$
$\mathbf{P}_p$-almost surely as $m\rightarrow\infty$ by (\ref{e28}).  Letting
$\epsilon\rightarrow 0$ yields $\liminf_{p\uparrow p_c}L(p)\mu(p)\geq\nu$.
\end{proof}

\subsection{Proof of Theorem \ref{t2}}\label{final}
Finally, it is easy to derive Theorem \ref{t2} from Lemmas \ref{l16} and \ref{l17}:
\begin{proof}[Proof of Theorem \ref{t2}]
Combining Lemmas \ref{l16} and \ref{l17}, we obtain that for each
$u\in\mathbb{U}$,
\begin{equation}\label{e108}
\lim_{p\uparrow p_c}L(p)\mu(p,u)=\nu.
\end{equation}
It remains to prove that the convergence in (\ref{e108}) is uniform
in $u\in\mathbb{U}$.
By (\ref{p8}) (or (\ref{e108})) and the fact that $\mu(p,z)$ is a norm on $\mathbb{C}$ for each fixed
$p\in(0,p_c)$ (see Section \ref{s1}), there is a constant $C\geq 1$ such that
\begin{equation}\label{e110}
L(p)\mu(p,u)\leq C\quad\mbox{for all $p\in(0,p_c)$ and all
$u\in\mathbb{U}$}.
\end{equation}

Fix $\epsilon\in(0,1)$.  Let
$K=K(\epsilon):=\lceil2C\pi/\epsilon\rceil$.  For
$k\in\{1,2,\ldots,K\}$, let $u_k=u_k(\epsilon):=e^{2\pi ki/K}$.  It follows from (\ref{e108}) that
there is $p_0=p_0(\epsilon)\in(0,p_c)$ such that for all
$p\in(p_0,p_c)$ and all $k\in\{1,2,\ldots,K\}$,
\begin{equation}\label{e111}
|L(p)\mu(p,u_k)-\nu|\leq\epsilon/2.
\end{equation}
It is obvious that for each $u\in\mathbb{U}$, there is
$\tilde{u}\in\{u_1,\ldots,u_K\}$ such that
$|u-\tilde{u}|<\pi/K$.  This, combined with (\ref{e111}), (\ref{e110}) and the
fact that $\mu(p,z)$ is a norm on $\mathbb{C}$ when $p\in(0,p_c)$,
implies that for any $u\in\mathbb{U}$ with
$u\notin\{u_1,\ldots,u_k\}$ and $p\in(p_0,p_c)$, we have
\begin{align*}
&L(p)\mu(p,u)\leq L(p)\mu(p,\tilde{u})+L(p)\mu(p,u-\tilde{u})\\
&\qquad\qquad\quad\leq
\nu+\frac{\epsilon}{2}+|u-\tilde{u}|L(p)\mu\left(p,\frac{u-\tilde{u}}{|u-\tilde{u}|}\right)\leq
\nu+\epsilon,\\
&L(p)\mu(p,u)\geq L(p)\mu(p,\tilde{u})-L(p)\mu(p,\tilde{u}-u)\\
&\qquad\qquad\quad\geq
\nu-\frac{\epsilon}{2}-|\tilde{u}-u|L(p)\mu\left(p,\frac{\tilde{u}-u}{|\tilde{u}-u|}\right)\geq
\nu-\epsilon.
\end{align*}
These two inequalities combined with (\ref{e111}) implies that for
each $\epsilon\in(0,1)$, there is $p_0\in(0,p_c)$ such
that
\begin{equation*}
|L(p)\mu(p,u)-\nu|\leq\epsilon\quad\mbox{for all $p\in(p_0,p_c)$ and all $u\in\mathbb{U}$}.
\end{equation*}
Theorem \ref{t2} follows from this immediately.
\end{proof}

\begin{acks}[Acknowledgments]
The author is especially grateful to an anonymous referee for 1) pointing out that a preliminary version of the proof of Theorem \ref{t2} can be simplified, without using the scaling limit of the collection of portions of clusters in a strip, and 2) suggesting us another definition of $\mathcal{C}(z)$ which makes the proof cleaner.  The referee's suggestions make the paper less technical and much shorter than its preprint version \cite{Yao21}.  The author thanks Geoffrey Grimmett for an invitation to the Statistical Laboratory in Cambridge University, and thanks the
hospitality of the Laboratory, where this project was initiated.  The author was supported by the National Key R\&D Program of China (No. 2020YFA0712700),
the National Natural Science Foundation of China (No. 12288201 and No. 11601505) and the Key Laboratory of Random Complex Structures and Data Science, CAS (No. 2008DP173182).
\end{acks}

\end{document}